\newcommand{\periodafter}[1]{#1.}
\titleformat{\subsection}[runin]
  {\normalfont\bfseries}{\thesubsection}{0.5em}{\periodafter}
\newenvironment*{StatementList}
{\begin{enumerate}[label={\alph*)}, noitemsep, nolistsep, align=left, leftmargin=*]}
{\end{enumerate}}
   \def\tikz@plane@origin{\pgfpointxyz{0}{0}{#1}}%
   \def\tikz@plane@x{\pgfpointxyz{1}{0}{#1}}%
   \def\tikz@plane@y{\pgfpointxyz{0}{1}{#1}}%
\theoremstyle{definition}
\newtheorem{definition}{Definition}[section]
\theoremstyle{plain}
\newtheorem{theorem}[definition]{Theorem}
\newtheorem{proposition}[definition]{Proposition}
\newtheorem{lemma}[definition]{Lemma}
\newtheorem{corollary}[definition]{Corollary}
\newcommand*{\dotcup}{\,\dot\cup\,}
\DeclareMathOperator{\dom}{dom}
\DeclareMathOperator{\im}{im}
\DeclareMathOperator{\Po}{Po}
\DeclareMathOperator{\Tr}{Tr}
\DeclarePairedDelimiterX{\infdivx}[2]{(}{)}{%
  #1\;\delimsize\|\;#2%
}
\newcommand{\DKL}{D_{\textrm{KL}}\infdivx}
\newcommand{\DP}{D_{\chi^2}\infdivx}
\begin{document}

\title{Satisfiability Thresholds for Regular Occupation Problems\footnote{An extended abstract of a preliminary version of this paper has appeared at the proceedings of ICALP '19~\cite{panagiotou2019}. The results here apply to all 2-in-$k$ occupation problems, as opposed to only $k = 4$ in~\cite{panagiotou2019}.}}
\author{Konstantinos Panagiotou\footnote{The research leading to these results has received funding from the European Research Council, ERC Grant Agreement 772606–PTRCSP.} \\LMU München\\kpanagio@math.lmu.de \and Matija Pasch \\LMU München\\pasch@math.lmu.de}
\maketitle
\thispagestyle{empty}
%
%
%
%
%
%
%
\begin{abstract}
In the last two decades the study of random instances of constraint satisfaction problems (CSPs) has flourished across several disciplines, including computer science, mathematics and physics. The diversity of the developed methods, on the rigorous and non-rigorous side, has led to major advances regarding both the theoretical as well as the applied viewpoints.

Based on a ceteris paribus approach in terms of the density evolution equations known from statistical physics, we focus on a specific prominent class of
regular CSPs,
the so-called \emph{occupation problems}.
The regular $r$-in-$k$ occupation problems resemble a basis of this class.
By now, out of these CSPs only the satisfiability threshold -- the largest degree for  which the problem admits asymptotically a solution -- for the $1$-in-$k$ occupation problem has been rigorously established.
Here we determine the satisfiability threshold of the $2$-in-$k$ occupation problem for all $k$. In the proof we exploit the connection of an associated optimization problem regarding the overlap of satisfying assignements to a fixed point problem inspired by belief propagation, a message passing algorithm developed for solving such CSPs.
\end{abstract}
%
%
%
\section{Introduction}\label{s_introduction}
Inspired by the pioneering work \cite{erdos1960} of Erdős and Rényi in 1960, random discrete structures have been systematically studied in literally thousands of contributions. The initial motivation of this research was to study open problems in graph theory and combinatorics. In the following decades, however, the application of such models proved useful as a unified approach to treat a variety of problems in several fields. To mention just a few, random graphs turned out to be valuable in solving fundametal theoretical and practical problems, such as the development of error correcting codes~\cite{kudekar13}, the study of statistical inference through the stochastic block model~\cite{abbe2018}, and the establishment of lower bounds in complexity theory~\cite{galanis14,feige02}.

The results of the past years of research suggest the existence of \emph{phase transitions} in many classes of random discrete structures, i.e.~a specific value of a given model parameter at which the properties of the system in question  change dramatically.
Random constraint satisfaction problems are one specific type of such structures that tend to exhibit this remarkable property and that are of particular interest in too many areas to mention, covering complexity theory, combinatorics, statistical mechanics, artificial intelligence, biology, engineering and economics.
An instance of a CSP is defined by a set of variables that take values in -- typically finite -- domains and a set of constraints, where each constraint is satisfied for specific assignments of the subset of variables it involves. 
A major computational challenge is to determine whether such an instance is satisfiable, i.e.~to determine if there is an assignment of all variables that satisfies all constraints.

Since the 1980s non-rigorous methods have been introduced in statistical physics that are targeted at the analysis of phase transitions in random CSPs \cite{mezard1987,mezard2009,krzakala2016}. Within this line of research, a variety of exciting and unexpected phenomena were discovered, as for example the existence of multiple phase transitions with respect to the structure of the solution space; these transitions may have a significant impact on the hardness of the underlying instances. Since then these methods and the description of the conjectured regimes have been heavily supported by several findings, including the astounding empirical success of randomized algorithms like \emph{belief} and \emph{survey propagation} \cite{braunstein2005}, as well as rigorous verifications, most prominently the phase transition in $k$-SAT \cite{ding2015} (for large $k$) and the condensation phase transition in many important models~\cite{coja2018a}. However, a complete rigorous study is still a big challenge for computer science and mathematics.

Usually, the relevant model parameter of a random CSP is a certain problem specific \emph{density} as illustrated below. The main focus of research is to study the occurrence of phase transitions in the solution space structure and in particular the existence of \emph{(sharp) satisfiability thresholds}, i.e., critical values of the density such that the probability that a random CSP admits a solution tends to one as the number of variables tends to infinity for densities below the threshold, while this limiting probability tends to zero for densities above the threshold.

\subparagraph*{Random CSPs.}
Two important types of random CSPs are Erdős-Rényi (ER) type and random regular CSPs. In both cases the number $n=|V|$ of variables and the number $k$ of variables involved in each constraint is fixed. In ER type CSPs we further fix the number $m=|F|$ of constraints and thereby the \emph{density} $\alpha={m}/{n}$, i.e.~the average number of constraints that a variable is involved in. In the regular case we only consider instances where each variable is involved in the same number $d$ of constraints, which  fixes the \emph{density}~$d$ as well as the number $m={dn}/{k}$ of constraints.
In a second step we randomly choose the sets of satisfying assignments for each constraint depending on the problem. For example, in the prominent $k$-SAT problem one forbidden assignment is chosen uniformly at random from all possible assignments of the involved binary variables for each constraint independently.
Another example is the coloring of hypergraphs, where the constraints are attached to the hyperedges and the variables to the vertices, i.e.~the variables involved in a constraint correspond to the vertices incident to a hyperedge.
In this case a constraint is violated iff all involved vertices take the same color.

In our work we focus on a class of random regular CSPs in which the choice of satisfying assignments per constraint is fixed in advance, i.e.,~a class that contains the aforementioned coloring of ($d$-regular $k$-uniform) hypergraphs and occupation problems amongst others, but that does not include problems with further randomness in the constraints, like $k$-SAT and $k$-XORSAT.
The lack of randomness on the level of constraints makes this class particularly accessible for an analysis of the asymptotic solution space structure and significantly simplifies simulations based on the well-known \emph{population dynamics}. Using such simulations, non-rigorous results for this class have been mostly established for the case where the variables are binary valued, so called occupation problems, or restricted to variants of hypergraph coloring for non-binary variables. Besides the extensive studies on the coloring of simple graphs, i.e.~$k=2$, the only rigorous results derived so far consider the arguably most simple type of occupation problems where each constraint is satisfied if exactly one involved variable evaluates to true, which we refer to as $d$-regular $1$-in-$k$ occupation problem.
In our current work we strive to extend these results to general $d$-regular $r$-in-$k$ occupation problems, i.e.~problems where each constraint is satisfied if $r$ out of the $k$ involved variables evaluate to true.
\subsection{Occupation Problems}\label{s_occupation_problems}
We continue with the formal definition of the class of problems we consider.
Let $k$, $d\in\mathbb Z_{\ge 2}$ and $r\in[k-1] := \{1, \dots, k-1\}$ be fixed.
Additionally, we are given non-empty sets $V$ of variables and constraints $F$.
We will use the convention to index elements of $V$ with the letter $i$ and
elements of $F$ with the letter $a$ (and subsequent letters) in the remainder.
Then an instance $o$ of the $d$-regular $r$-in-$k$ occupation problem is specified by 
a sequence $o=(v(a))_{a\in F}$ of $m=|F|$ subsets $v(a)\subseteq V$ of size $k$ such that
each of the $n=|V|$ variables is contained in $d$ of the subsets.
In graph theory the instance $o$ has a natural interpretation as a $(d,k)$-biregular graph (or $d$-regular $k$-factor graph) with node sets $V\dotcup F$ and edges $\{i,a\}\in E$ if $i\in v(a)$.
By the handshaking lemma, such objects only exist if $dn=km$, which we assume in the following.

Given an instance $o$ as just described, we say that an assignment $x\in\{0,1\}^V$ \emph{satisfies} a constraint $a\in F$ if $\sum_{i\in v(a)}x_i=r$, otherwise $x$ \emph{violates} $a$. If $x$ satisfies all constraints $a\in F$, then $x$ is a \emph{solution} of $o$.
Notice that $d$ times the number of $1$'s in $x$ needs to match the total number $rm=rdn/k$ of $1$'s observed on the factor side, so $k$ has to divide $rn$, which we also assume in the following.
We write $z(o)$ for the number of solutions of $o$. An example of a $4$-regular $2$-in-$3$ occupation problem is shown in Figure \ref{f_related_problems_occupation}.

Further, for given $m$, $n\in\mathbb Z_{>0}$ let $\mathcal O$ denote the set of all 
instances $o$ with variables $V=[n]$ and constraints $F=[m]$.
If $\mathcal O$ is not empty, then the random $d$-regular $r$-in-$k$ occupation problem $O$ is
uniform on $\mathcal O$ and
$Z=z(O)$ the number of solutions of $O$.
%
%
%
%
\subsection{Examples and Related Problems}\label{s_related_problems}
%
%
\begin{figure}
\begin{subfigure}[t]{0.497\textwidth}
\centering
\begin{tikzpicture}
  [scale=0.8, every node/.style={transform shape},
    var1/.style={circle,fill,minimum size=4mm},
    var0/.style={circle,draw,thick,minimum size=4mm},
    fac/.style={rectangle,draw,thick,minimum size=4mm}]
  \node (l1) at (1,5) [fac] {}; \node [inner sep=0, outer sep=0] at (1.45,5) {$a_1$};
  \node (l2) at (4,5) [fac] {}; \node [inner sep=0, outer sep=0] at (4.45,5) {$a_2$};
  \node (l3) at (7,5) [fac] {}; \node [inner sep=0, outer sep=0] at (7.45,5) {$a_3$};
  \node (l4) at (2,3) [fac] {}; \node [inner sep=0, outer sep=0] at (1.92,3.34) {$a_4$};
  \node (l5) at (3,3) [fac] {}; \node [inner sep=0, outer sep=0] at (3.1,3.34) {$a_5$};
  \node (l6) at (5,3) [fac] {}; \node [inner sep=0, outer sep=0] at (4.92,3.34) {$a_6$};
  \node (l7) at (6,3) [fac] {}; \node [inner sep=0, outer sep=0] at (6.1,3.34) {$a_7$};
  \node (l8) at (1,1) [fac] {}; \node [inner sep=0, outer sep=0] at (1.45,1) {$a_8$};
  \node (l9) at (4,1) [fac] {}; \node [inner sep=0, outer sep=0] at (4.45,1) {$a_9$};
  \node (l10) at (7,1) [fac] {}; \node [inner sep=0, outer sep=0] at (7.5,1) {$a_{10}$};
  \node (a1) at (2.5,4) [var1] {}; \node [inner sep=0, outer sep=0] at (2.93,4) {$i_1$};
  \node (a2) at (5.5,4) [var1] {}; \node [inner sep=0, outer sep=0] at (5.93,4) {$i_2$};
  \node (a3) at (1,3) [var0] {}; \node [inner sep=0, outer sep=0] at (1.25,3.3) {$i_3$};
  \node (a4) at (4,3) [var0] {}; \node [inner sep=0, outer sep=0] at (4.25,3.3) {$i_4$};
  \node (a5) at (7,3) [var0] {}; \node [inner sep=0, outer sep=0] at (7.25,3.3) {$i_5$};
  \node (a6) at (2.5,2) [var1] {}; \node [inner sep=0, outer sep=0] at (2.93,2) {$i_6$};
  \node (a7) at (5.5,2) [var1] {}; \node [inner sep=0, outer sep=0] at (5.93,2) {$i_7$};
\begin{scope}
  \draw [thick] (l1) to (a1);
  \draw [thick] (l2) to (a1);
  \draw [thick] (l4) to (a1);
  \draw [thick] (l5) to (a1);
  \draw [thick] (l2) to (a2);
  \draw [thick] (l3) to (a2);
  \draw [thick] (l6) to (a2);
  \draw [thick] (l7) to (a2);
  \draw [thick] (l4) to (a6);
  \draw [thick] (l5) to (a6);
  \draw [thick] (l8) to (a6);
  \draw [thick] (l9) to (a6);
  \draw [thick] (l6) to (a7);
  \draw [thick] (l7) to (a7);
  \draw [thick] (l9) to (a7);
  \draw [thick] (l10) to (a7);
  \draw [thick] (l1) to (a3);
  \draw [thick] (l4) to (a3);
  \draw [thick] (l8) to (a3);
  \draw [thick] (l2) to (a4);
  \draw [thick] (l5) to (a4);
  \draw [thick] (l6) to (a4);
  \draw [thick] (l9) to (a4);
  \draw [thick] (l3) to (a5);
  \draw [thick] (l7) to (a5);
  \draw [thick] (l10) to (a5);
  \draw [thick] (0,3) to (a3);
  \draw [thick] (8,3) to (a5);
  \draw [thick] (0,4.33) to (l1);
  \draw [thick] (0,1.67) to (l8);
  \draw [thick] (8,4.33) to (l3);
  \draw [thick] (8,1.67) to (l10);
\end{scope}
\begin{scope}
\path (0,0) to (8,0) to (8,6) to (0,6) to (0,0);
\end{scope}
\end{tikzpicture}
\subcaption{Solution of the 2-occupation problem}\label{f_related_problems_occupation}
\end{subfigure}
\begin{subfigure}[t]{0.497\textwidth}
\centering
\begin{tikzpicture}
  [scale=0.8, every node/.style={transform shape},
    var/.style={circle,draw,thick,minimum size=4mm}]
  \node (v1) at (1,5) [var] {}; \node [inner sep=0,outer sep=0] (lv1) at (0.2,5) {$a_1$};\draw [->] (0.36,5) to (v1);
  \node (v2) at (4,5) [var] {}; \node [inner sep=0,outer sep=0] (lv2) at (3.2,5) {$a_2$};\draw [->] (3.36,5) to (v2);
  \node (v3) at (7,5) [var] {}; \node [inner sep=0,outer sep=0] (lv3) at (7.8,5) {$a_3$};\draw [->] (7.6,5) to (v3);
  \node (v4) at (2,3) [var] {}; \node [inner sep=0,outer sep=0] (lv4) at (2,3.8) {$a_4$};\draw [->] (2,3.7) to (v4);
  \node (v5) at (3,3) [var] {}; \node [inner sep=0,outer sep=0] (lv5) at (3,3.8) {$a_5$};\draw [->] (3,3.7) to (v5);
  \node (v6) at (5,3) [var] {}; \node [inner sep=0,outer sep=0] (lv6) at (5,3.8) {$a_6$};\draw [->] (5,3.7) to (v6);
  \node (v7) at (6,3) [var] {}; \node [inner sep=0,outer sep=0] (lv7) at (6,3.8) {$a_7$};\draw [->] (6,3.7) to (v7);
  \node (v8) at (1,1) [var] {}; \node [inner sep=0,outer sep=0] (lv8) at (0.2,1) {$a_8$};\draw [->] (0.36,1) to (v8);
  \node (v9) at (4,1) [var] {}; \node [inner sep=0,outer sep=0] (lv9) at (3.2,1) {$a_9$};\draw [->] (3.36,1) to (v9);
  \node (v10) at (7,1) [var] {}; \node [inner sep=0,outer sep=0] (lv10) at (7.73,1) {$a_{10}$};\draw [->] (7.46,1) to (v10);
\begin{scope}
  \draw [thick](0,5.5) to (1.5,5.5) to[out=0,in=45] (1,4.5) to (0,3.5);
  \draw [thick](0,0.5) to (1.5,0.5) to[out=0,in=315] (1,1.5) to (0,2.5);
  \draw [thick](8,5.3) to (6.5,5.3) to[out=180,in=135] (7,4.5) to (8,3.5);
  \draw [thick](8,0.7) to (6.5,0.7) to[out=180,in=225] (7,1.5) to (8,2.5);
  \draw [thick](0.9,5.3) to (4.1,5.3) to[out=0,in=68.96] (4,4) to (3.615,3) to[out=248.96,in=0] (3,2.7) to (2,2.7)
    to[out=180,in=291.04] (1.385,3) to (1,4) to[out=111.04,in=180] (0.9,5.3);
  \draw [thick](0.9,0.7) to (4.1,0.7) to[out=0,in=291.04] (4,2) to (3.615,3) to[out=111.04,in=0] (3,3.3) to (2,3.3)
    to[out=180,in=68.96] (1.385,3) to (1,2) to[out=248.96,in=180] (0.9,0.7);

  \draw [thick](7.1,5.5) to (3.9,5.5) to[out=180,in=109.65] (4,4.1) to (4.393,3) to[out=289.65,in=180] (5,2.7) to (6,2.7)
    to[out=0,in=250.35] (6.607,3) to (7,4.1) to[out=70.35,in=0]  (7.1,5.5);
  \draw [thick](7.1,0.5) to (3.9,0.5) to[out=180,in=250.35] (4,1.9) to (4.393,3) to[out=70.35,in=180] (5,3.3) to (6,3.3)
    to[out=0,in=109.65] (6.607,3) to (7,1.9) to[out=289.65,in=0] (7.1,0.5);

  \draw[dashed] (4,0) to[out=0,in=270] (5.33,1.33) to (5.33,4.67) to[out=90,in=0] (4,6) to[out=180,in=90] (2.67,4.67)
    to (2.67,1.33) to[out=270,in=180] (4,0);
  \draw[dashed] (8,5.5) to[out=135,in=0] (7,6) to[out=180,in=90] (5.67,4.67) to (5.67,1.33)
    to[out=270,in=180] (7,0) to[out=0,in=225] (8,0.5);
  \draw[dashed] (0,5.7) to[out=45,in=180] (1,6) to[out=0,in=90] (2.33,4.67) to (2.33,1.33)
    to[out=270,in=0] (1,0) to[out=180,in=315] (0,0.3);
\end{scope}
  \node (lh1) [inner sep=0,outer sep=0] at (0.5,3) {$e_{i_1}$};\draw [->] (0.5,3.15) to (1.13,3.73);
  \node (lh2) [inner sep=0,outer sep=0] at (7.5,3) {$e_{i_2}$};\draw [->] (7.3,3.15) to (6.87,3.73);
  \node (lh3) [inner sep=0,outer sep=0] at (2.5,5.8) {$e_{i_3}$};\draw [->] (2.25,5.85) to (1.63,5.85);
  \node (lh4) [inner sep=0,outer sep=0] at (5.5,5.8) {$e_{i_4}$};\draw [->] (5.25,5.85) to (4.63,5.85);
  \node (lh5) [inner sep=0,outer sep=0] at (5.5,0.15) {$e_{i_5}$};\draw [->] (5.7,0.2) to (6.32,0.2);
  \node (lh6) [inner sep=0,outer sep=0] at (1.8,1.8) {$e_{i_6}$};\draw [->] (1.58,1.93) to (1.13,2.27);
  \node (lh7) [inner sep=0,outer sep=0] at (6.2,1.8) {$e_{i_7}$};\draw [->] (6.3,1.93) to (6.87,2.27);
\begin{scope}
\path (0,0) to (8,0) to (8,6) to (0,6) to (0,0);
\end{scope}
\end{tikzpicture}
\subcaption{A $2$-factor in a hypergraph}\label{f_related_problems_2_factor}
\end{subfigure}
\captionsetup{width=0.89\textwidth, font=small}
\caption{
On the left we see a solution of the $4$-regular $2$-in-$3$ occupation problem on a $4$-regular $3$-factor graph, where the
rectangles and circles depict the constraints (factors) and variables (filled if they take the value one in the solution).
The figure on the right shows a $2$-factor in a $3$-regular $4$-uniform hypergraph, where the circles, solid and dashed shapes represent the vertices, hyperedges in the $2$-factor and the other hyperedges respectively.
}\label{f_related_problems}
\end{figure}
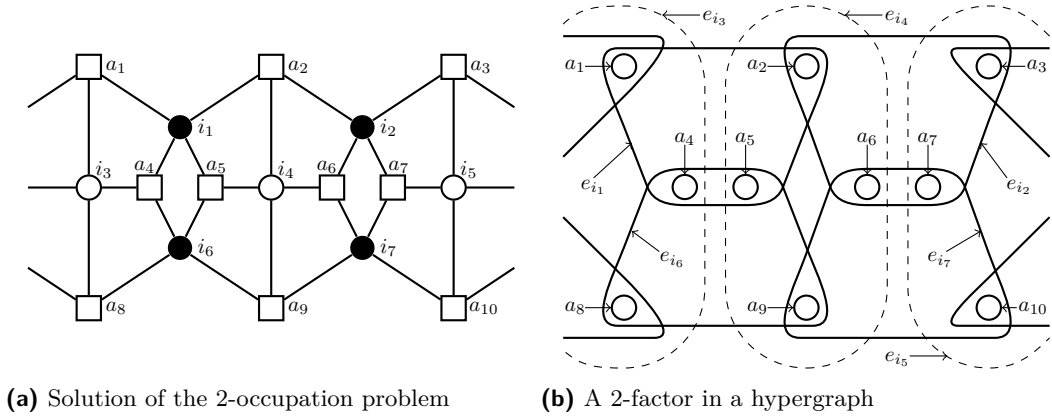
%
%
A problem that is closely related and can be reduced to the $d$-regular $r$-in-$k$ occupation problem is the $d$-regular positive $r$-in-$k$ SAT problem, a variant of $k$-SAT introduced above.
In this case, we consider a boolean formula
\begin{flalign*}
f=\bigwedge_{a\in F}c_a\textrm{, } \quad c_a=\bigvee_{i\in v(a)}i\textrm{, }a\in F\textrm{, }
\end{flalign*}
in conjunctive normal form with $m$ clauses over $n$ variables $i\in V$, such that
no literal appears negated (hence \textit{positive} $r$-in-$k$ SAT),
and where each clause $c_a$ is the disjunction of $k$ literals and
each variable appears in exactly $d$ clauses (hence $d$-regular).
The decision problem  is to determine if there exists an assignment $x$  such that exactly $r$ literals in each clause evaluate to true (hence $r$-in-$k$ SAT).
In \cite{moore2016} the satisfiability threshold for this problem was determined for $r=1$, i.e.~the case where exactly one literal in each clause evaluates to true. Our Theorem \ref{theorem_satisfiability_threshold} solves this problem for $r=2$ and $k\in\mathbb Z_{\ge 4}$.

%
%
Our second example deals with a prominent problem related to graph theory.
A $k$-regular $d$-uniform hypergraph $h$ is a pair $h=(F,E)$ with $m = |F|$ vertices and $n = |E|$ (hyper-)edges such that each edge contains $d$ vertices and the degree of each vertex is $k$.
An \emph{$r$-factor} $E'$ is a subset of the hyperedges such that each vertex $a\in F$ is incident to $r$ hyperedges $e_i\in E'$. In this case the problem is to determine if $h$ has an $r$-factor.
For example, the case $r=1$ is the well-known perfect matching problem and the threshold was determined in \cite{cooper1996}. An example of a $2$-factor in a hypergraph is shown in Figure \ref{f_related_problems_2_factor}. Theorem \ref{theorem_satisfiability_threshold} solves also this problem for $r=2$ and $k\in\mathbb Z_{\ge 4}$.

%
%
There are several other problems in complexity and graph theory that are closely related to the examples above.
The satisfiability threshold in Theorem \ref{theorem_satisfiability_threshold} also applies to a variant of the vertex cover problem (or hitting set problem from set theory perspective), where we choose a subset of the vertices (variables with value one) in a $d$-regular $k$-uniform hypergraph such that each hyperedge is incident to exactly two vertices in the subset.
Analogously, Theorem \ref{theorem_satisfiability_threshold} also establishes the threshold for a variant of the set cover problem in set theory corresponding to $2$-factors in hypergraphs, i.e.~given a family of $d$-subsets (hyperedges) and a universe (vertices) with each element contained in $k$ subsets, the problem is to find a subfamily of the subsets such that each element of the universe is contained in exactly two subsets of the subfamily.
Further, Theorem \ref{theorem_satisfiability_threshold} can e.g.~also be used to give sufficient conditions for the (asymptotic) existence of Euler families in regular uniform hypergraphs as discussed in \cite{bahmanian2017}.

%
%
\subsection{Main Results}\label{s_main_results}
The satisfiability threshold for the $d$-regular $1$-in-$k$ occupation problem has been established in \cite{moore2016,cooper1996}, which also covers the $d$-regular $2$-in-$3$ occupation problem due to color symmetry.
Our main result pins down the location of the satisfiability threshold of the random $d$-regular $2$-in-$k$ occupation problem for $k\in\mathbb Z_{\ge 4}$.
For this purpose let
\begin{flalign}\label{e_ws_ds_definition}
d^*=d^*(k)=\frac{kH(2/k)}{kH(2/k)-\ln\binom{k}{2}}\textrm{,}
\quad
k\in\mathbb Z_{\ge 4}\textrm{,}
\end{flalign}
where $H(p)=-p\ln(p)-(1-p)\ln(1-p)$ is the entropy of $p\in[0,1]$.
The following theorem establishes the location of the threshold at
$d^*$.
\begin{theorem}[$2$-in-$k$ Occupation Satisfiability Threshold]
\label{theorem_satisfiability_threshold}
Let $k\in\mathbb Z_{\ge 4}$, $d\in\mathbb Z_{\ge 2}$, and let $Z$ be the number of solutions
from Section \ref{s_occupation_problems}.
There exists a sharp satisfiability threshold at $d^*$, i.e.~for any increasing sequence $(n_i)_{i\in\mathbb Z_{>0}}\subseteq\mathcal N=\{n:dn\textrm{, }2n\in k\mathbb Z_{>0}\}$ and $m_i=dn_i/k$ we have
\begin{flalign*}
\lim_{i\rightarrow\infty}\mathbb P[Z>0]=
\left\{\begin{matrix}
1&\textrm{, }d<d^*\\
0&\textrm{, }d\ge d^*
\end{matrix}\right.\textrm{.}
\end{flalign*}
\end{theorem}
We provide a self-contained proof for Theorem~\ref{theorem_satisfiability_threshold} using the first and second moment method with small subgraph conditioning for $Z$.
In particular, a main technical contribution in proving Theorem~\ref{theorem_satisfiability_threshold} is the optimization of a certain multivariate function that appears in the computation of the second moment, which encodes the interplay between the ``similarity'' of various assignments and the change in the corresponding probability of being satisfying that they induce. A direct corollary of this optimization step at the threshold $d^*$ is the confirmation of the conjecture by the authors in \cite{panagiotou2019}. Among other things, at the core of our contribution we take a novel and rather different approach to tackle the optimization, inspired by \cite{robalewska1996} and \cite{yedidia2001} as well as other works relating the fixed points of belief propagation to the stationary points of the bethe free entropy, respectively to the computation of the annealed free entropy density; see Section~\ref{s_second_moment_optimization} for details.
Finally, we show that $d^*$ is not an integer in Lemma \ref{lem_satthresh_basics} below, so as opposed to the case $r=1$ \cite{moore2016}, for $r=2$ there is no need for a dedicated analysis at criticality.
%
%
\subsection{Related Work}\label{s_background}
The regular version of the random $1$-in-$k$ occupation problem (and related problems) has been studied in \cite{cooper1996,moore2016} using the first and second moment method with small subgraph conditioning.
The paper \cite{robalewska1996} shows that $\lim_{i\rightarrow\infty}\mathbb P[Z>0]=1$ for $d=2$ and $k\in\mathbb Z_{\ge 2}$  in the $d$-regular $2$-in-$k$ occupation problem, i.e.~the existence of $2$-factors in $k$-regular simple graphs.
A recent discussion of $2$-factors (and the related Euler families) that does not rely on the probabilistic method is presented in \cite{bahmanian2017}.
Further, randomized polynomial time algorithms for the generation and approximate counting of $2$-factors in random regular graphs have been developed in \cite{frieze1996}.

The study of Erd\H{o}s R\'{e}nyi (hyper-)graphs was initiated by the groundbreaking paper~\cite{erdos1960} in 1960 and turned into a fruitful field of research with many applications, including early results on $1$-factors in simple graphs~\cite{erdos1966}. On the contrary, results for the random $d$-regular $k$-uniform (hyper-)graph ensemble were rare before the introduction of the configuration (or pairing) model by Bollob\'{a}s \cite{bollobas1980} and the development of the small subgraph conditioning method \cite{janson1995,janson2000}.
While the proof scheme facilitated rigorous arguments to establish the existence and location of satisfiability thresholds of random regular CSPs \cite{molloy1997,bapst2017,kemkes2010,coja2016a,coja2016,ding2016,ding2016a}, the problems are treated on a case by case basis, while results on entire classes of random regular CSPs are still outstanding.

One of the main reasons responsible for the complexity of a rigorous analysis of random (regular) CSPs seems to be a conjectured structural change of the solution space for increasing densities.
This hypothesis has been put forward by physicists, verified in parts and mostly for ER ensembles, further led to new rigorous proof techniques \cite{ding2015,coja2016,coja2018} and to randomized algorithms \cite{braunstein2005,maneva2007} for NP-hard problems
that are not only of great value in practice, but can also be employed for precise numerical (though non-rigorous) estimates of satisfiability thresholds.
An excellent introduction to this \emph{replica theory} can be found in \cite{mezard2009,krzakala2016,talagrand2003}.
Specifically, numerical results indicating the satisfiability thresholds for $d$-regular $r$-in-$k$ occupation problems (more general variants, and for ER type hypergraphs) based on this conjecture were discussed in various publications \cite{castellani2003,asta2008,schmidt2016,gabrie2017,higuchi2010,zdeborova2008a, zdeborova2011}, where \emph{occupation problems} were introduced for the first time in \cite{mora2007}.

Another fundamental obstacle in the rigorous analysis is of a very technical nature and directly related to the second moment method as discussed in detail in our current work. In the case of regular $2$-in-$k$ occupation problems (amongst others) this optimization problem can be solved by exploiting a connection to the fixed points of belief propagation. This well-studied message passing algorithm is thoroughly discussed in \cite{mezard2009}.
\subsection{Open Problems}
In this work we rigorously establish the threshold for $r=2$ and $k\in\mathbb Z_{\ge 4}$ for the random regular $r$-in-$k$ occupation problem.
A rigorous proof for general $r$ (and $k$) seems to be involved, but further assumptions may significantly simplify the analysis.
For example, as an extension of the current work one may focus on $r$-in-$2r$ occupation problems, where the constraints are symmetric in the colors. As can be seen from our proof, this yields useful symmetry properties. Further, as suggested by the literature \cite{coja2018, coja2018a} such \emph{balanced} problems \cite{zdeborova2011, zdeborova2008a}
are usually more accessible to a rigorous study.
On the other hand, the optimization usually also significantly simplifies if only carried out for $k\ge k_0(r)$ for some (large) $k_0(r)$.

Apart from the generalizations discussed above, results for the general $r$-in-$k$ occupation problems are also still outstanding for Erdős-Rényi type CSPs,
the only exception being the satisfiability threshold for perfect matchings which was recently established by Kahn \cite{kahn2022}.
Further, there only exist bounds for the exact cover problem \cite{moore2008} on $3$-uniform hypergraphs, i.e.~$r=1$ and $k=3$.
\subparagraph*{Outline of the Proofs.}
In Section \ref{s_proof_techniques} we present the proof strategy on a high level.
Then, we turn to the notation and do some groundwork, in particular the analysis of $d^*(k)$, in Section \ref{m_preliminaries}.
The easy part of the main result is established in Section \ref{s_first_moment} using the first moment method.
The remainder is devoted to the proof that solutions exist below the threshold with probability tending to one,
starting with the second moment method in Section \ref{s_second_moment}.
Most of the twenty pages in this section are devoted to the solution of the optimization problem and related conjecture from \cite{panagiotou2019} using a belief propagation inspired approach.
Finally, we complete the small subgraph conditioning method in Section \ref{s_small_subgraph_conditioning}, using the proof of Lemma \ref{lem_number_of_cycles} in the Appendix \ref{a_number_of_cycles} as a blueprint.
%
%
\section{Proof Techniques}\label{s_proof_techniques}
In this section we give a high-level overview of our proof. We make heavy use of the so-called \emph{configuration model} for the generation of random instances in the form used by Moore \cite{moore2016}.

\subsection{The Configuration Model}\label{p_configuration_model}
Working with the uniform distribution on $d$-regular $k$-uniform hypergraphs directly is challenging.
Instead, we show Theorem \ref{theorem_satisfiability_threshold} for occupation problems on so-called \emph{configurations}.
A \emph{$d$-regular $k$-configuration} is a bijection $g:[n]\times[d]\rightarrow[m]\times[k]$, where the \emph{v-edges} $(i,h)\in[n]\times[d]$ represent pairs of variables $i\in[n]$ and so-called $i$-edges, i.e.~half-edge indices $h\in[d]$. The image $(a,h')=g(i,h)$ is an \emph{f-edge}, i.e.~a pair of a constraint (factor) $a\in[m]$ and an $a$-edge (or half-edge) $h'\in[k]$, indicating that the $i$-edge $h$ of the variable $i$ is wired to the $a$-edge $h'$ of $a$ and thereby suggesting that $i$ is connected to $a$ in the corresponding $d$-regular $k$-factor graph.
Notice that we can represent $g$ by an equivalent, four-partite, graph with
 (disjoint) vertex sets given by the variables $V=[n]$, constraints (factors) $F=[m]$, v-edges $H'=[n]\times[d]$ and f-edges $H=[m]\times[k]$, where each variable $i\in[n]$ connects to all its v-edges $(i,h')\in H'$, each constraint $a\in[m]$ to all its f-edges $(a,h)\in H$ and a v-edge $(i,h')$ connects to an f-edge $(a,h)$ if $g(i,h')=(a,h)$.

Let $\mathcal G$ be the set of all $d$-regular $k$-configurations on $n$ variables, and notice that $|\mathcal G|=\emptyset$ iff $dn\neq km$ and $|\mathcal G|=(dn)!=(km)!$ for $m=dn/k\in\mathbb Z$, which we assume from here on.
Further, the occupation problem on factor graphs directly translates to configurations,
i.e.~an assignment $x\in\{0,1\}^n$ is a solution of $g\in\mathcal G$ if for each constraint $a\in[m]$ there exist exactly two distinct $a$-edges $h$, $h'\in[k]$ such that $x_{i(a,h)}=x_{i(a,h')}=1$, where $i(a,h)=(g^{-1}(a,h))_1$ denotes the $h$-th neighbour of $a$. To wit, the occupation problem on a configuration corresponding to the example in Figure \ref{f_related_problems_occupation} is shown in Figure \ref{f_related_problems_2_configuration}.

Let $Z(g)$ be the number of solutions of $g \in \cal G$. As before, $Z=0$ almost surely unless $2n\in k\mathbb Z$. The following result is the main tool from proving Theorem~\ref{theorem_satisfiability_threshold}.
\begin{theorem}[Satisfiability Threshold for Configurations]
\label{satisfiability_threshold_configuration}
Theorem \ref{theorem_satisfiability_threshold} also holds for $Z$ as defined in this section.
\end{theorem}
%
%
Theorem \ref{satisfiability_threshold_configuration} translates to the models in Section \ref{s_occupation_problems} and Section \ref{s_related_problems} using standard arguments, namely symmetry, the discussion of parallel edges, contiguity, and the fact that both the variable and the factor neighborhoods are unique with probability tending to one.
Hence, in the remainder of this contribution we exclusively consider the occupation problem on configurations.
\begin{figure}
\begin{subfigure}[t]{0.497\textwidth}
\centering
\begin{tikzpicture}
  [scale=0.8, every node/.style={transform shape},
    var1/.style={circle,fill,minimum size=4mm},
    var0/.style={circle,draw,thick,minimum size=4mm},
    fac/.style={rectangle,draw,thick,minimum size=4mm},
    varedg1/.style={circle,fill,minimum size=2mm,inner sep=0},
    varedg0/.style={circle,draw,thick,minimum size=2mm,inner sep=0},
    facedg1/.style={rectangle,fill,minimum size=2mm,inner sep=0},
    facedg0/.style={rectangle,draw,thick,minimum size=2mm,inner sep=0}]
  \node (l2) at (4,5) [fac] {}; \node [inner sep=0, outer sep=0] at (4,5.35) {$a_2$};
  \node (l5) at (2.5,3) [fac] {}; \node [inner sep=0, outer sep=0] at (2.5,3.35) {$a_5$};
  \node (l6) at (5.5,3) [fac] {}; \node [inner sep=0, outer sep=0] at (5.5,3.35) {$a_6$};
  \node (l9) at (4,1) [fac] {}; \node [inner sep=0, outer sep=0] at (4,0.65) {$a_9$};
  \node (a1) at (1,4) [var1] {}; \node [inner sep=0, outer sep=0] at (0.72,4.3) {$i_1$};
  \node (a2) at (7,4) [var1] {}; \node [inner sep=0, outer sep=0] at (7.28,4.3) {$i_2$};
  \node (a4) at (4,3) [var0] {}; \node [inner sep=0, outer sep=0] at (4.3,2.7) {$i_4$};
  \node (a6) at (1,2) [var1] {}; \node [inner sep=0, outer sep=0] at (0.85,1.7) {$i_6$};
  \node (a7) at (7,2) [var1] {}; \node [inner sep=0, outer sep=0] at (7.3,1.7) {$i_7$};
  \node (fe21) at (3,4.67) [facedg1]{};\node [inner sep=0, outer sep=0] at (3,5) {$h_{a_2,1}$};
  \node (fe22) at (4,4.33) [facedg0]{};\node [inner sep=0, outer sep=0] at (4.5,4.22) {$h_{a_2,2}$};
  \node (fe23) at (5,4.67) [facedg1]{};\node [inner sep=0, outer sep=0] at (5,5) {$h_{a_2,3}$};
  \node (fe51) at (3,3) [facedg0]{};\node [inner sep=0, outer sep=0] at (3.2,3.3) {$h_{a_5,1}$};
  \node (fe52) at (2,3.33) [facedg1]{};\node [inner sep=0, outer sep=0] at (1.53,3.25) {$h_{a_5,2}$};
  \node (fe53) at (2,2.67) [facedg1]{};\node [inner sep=0, outer sep=0] at (1.5,2.75) {$h_{a_5,3}$};
  \node (fe61) at (6,3.33) [facedg1]{};\node [inner sep=0, outer sep=0] at (6.5,3.2) {$h_{a_6,1}$};
  \node (fe62) at (5,3) [facedg0]{};\node [inner sep=0, outer sep=0] at (5.1,2.7) {$h_{a_6,2}$};
  \node (fe63) at (6,2.67) [facedg1]{};\node [inner sep=0, outer sep=0] at (6.5,2.73) {$h_{a_6,3}$};
  \node (fe91) at (5,1.33) [facedg1]{};\node [inner sep=0, outer sep=0] at (5.15,1.02) {$h_{a_9,1}$};
  \node (fe92) at (4,1.67) [facedg0]{};\node [inner sep=0, outer sep=0] at (3.5,1.7) {$h_{a_9,2}$};
  \node (fe93) at (3,1.33) [facedg1]{};\node [inner sep=0, outer sep=0] at (3,1.1) {$h_{a_9,3}$};
  \node (ve11) at (2,4.33) [varedg1]{};\node [inner sep=0, outer sep=0] at (2,4.67) {$h_{i_1,1}$};
  \node (ve14) at (1.5,3.67) [varedg1]{};\node [inner sep=0, outer sep=0] at (1.98,3.7) {$h_{i_1,4}$};
  \node (ve22) at (6,4.33) [varedg1]{};\node [inner sep=0, outer sep=0] at (6,4.67) {$h_{i_2,2}$};
  \node (ve23) at (6.5,3.67) [varedg1]{};\node [inner sep=0, outer sep=0] at (6.05,3.75) {$h_{i_2,3}$};
  \node (ve41) at (4.5,3) [varedg0]{};\node [inner sep=0, outer sep=0] at (4.6,3.3) {$h_{i_4,1}$};
  \node (ve42) at (4,3.67) [varedg0]{};\node [inner sep=0, outer sep=0] at (3.53,3.7) {$h_{i_4,2}$};
  \node (ve43) at (3.5,3) [varedg0]{};\node [inner sep=0, outer sep=0] at (3.55,2.75) {$h_{i_4,3}$};
  \node (ve44) at (4,2.33) [varedg0]{};\node [inner sep=0, outer sep=0] at (4.48,2.2) {$h_{i_4,4}$};
  \node (ve61) at (1.5,2.33) [varedg1]{};\node [inner sep=0, outer sep=0] at (1.93,2.17) {$h_{i_6,1}$};
  \node (ve64) at (2,1.67) [varedg1]{};\node [inner sep=0, outer sep=0] at (2,1.45) {$h_{i_6,4}$};
  \node (ve72) at (6.5,2.33) [varedg1]{};\node [inner sep=0, outer sep=0] at (6.05,2.35) {$h_{i_7,2}$};
  \node (ve73) at (6,1.67) [varedg1]{};\node [inner sep=0, outer sep=0] at (6.15,1.36) {$h_{i_7,3}$};
\begin{scope}
  \draw [thick] (l2) to (fe21);\draw [thick] (fe21) to (ve11);\draw [thick] (ve11) to (a1);
  \draw [thick] (l5) to (fe52);\draw [thick] (fe52) to (ve14);\draw [thick] (ve14) to (a1);
  \draw [thick] (l2) to (fe23);\draw [thick] (fe23) to (ve22);\draw [thick] (ve22) to (a2);
  \draw [thick] (l6) to (fe61);\draw [thick] (fe61) to (ve23);\draw [thick] (ve23) to (a2);
  \draw [thick] (l5) to (fe53);\draw [thick] (fe53) to (ve61);\draw [thick] (ve61) to (a6);
  \draw [thick] (l6) to (fe63);\draw [thick] (fe63) to (ve72);\draw [thick] (ve72) to (a7);
  \draw [thick] (l9) to (fe93);\draw [thick] (fe93) to (ve64);\draw [thick] (ve64) to (a6);
  \draw [thick] (l9) to (a7);
  \draw [thick] (l2) to (fe22);\draw [thick] (fe22) to (ve42);\draw [thick] (ve42) to (a4);
  \draw [thick] (l5) to (fe51);\draw [thick] (fe51) to (ve43);\draw [thick] (ve43) to (a4);
  \draw [thick] (l6) to (fe62);\draw [thick] (fe62) to (ve41);\draw [thick] (ve41) to (a4);
  \draw [thick] (l9) to (fe92);\draw [thick] (fe92) to (ve44);\draw [thick] (ve44) to (a4);
  \draw [dashed] (0,4.33) to (a1);\draw [dashed] (0,3.33) to (a1);
  \draw [dashed] (0,1.67) to (a6);\draw [dashed] (0,2.67) to (a6);
  \draw [dashed] (8,4.33) to (a2);\draw [dashed] (8,3.33) to (a2);
  \draw [dashed] (8,1.67) to (a7);\draw [dashed] (8,2.67) to (a7);
\end{scope}
\begin{scope}
\path (0,0) to (8,0) to (8,6) to (0,6) to (0,0);
\end{scope}
\end{tikzpicture}
\subcaption{occupation problem on configurations}\label{f_related_problems_2_configuration}
\end{subfigure}
\begin{subfigure}[t]{0.497\textwidth}
\centering
\begin{tikzpicture}
  [scale=0.8, every node/.style={transform shape},
    var1/.style={circle,fill,thick,minimum size=4mm},
    var0/.style={circle,draw,thick,minimum size=4mm},
    fac/.style={rectangle,draw,thick,minimum size=4mm}]
  \node (a1) at (2.5,4) [var1] {};
  \node (a2) at (5.5,4) [var1] {};
  \node (a3) at (1,3) [var0] {};
  \node (a4) at (4,3) [var0] {};
  \node (a5) at (7,3) [var0] {};
  \node (a6) at (2.5,2) [var1] {};
  \node (a7) at (5.5,2) [var1] {};
\begin{scope}
  \draw (0,4.3) to (2.8,4.3) to[out=0,in=33.69] (2.8,3.7) to (1.3,2.7) to[out=213.69,in=326.31] (0.7,2.7) to (0,3.167);
  \draw (0,1.7) to (2.8,1.7) to[out=0,in=326.31] (2.8,2.3) to (1.3,3.3) to[out=146.31,in=33.69] (0.7,3.3) to (0,2.833);
  \draw (8,4.3) to (5.2,4.3) to[out=180,in=146.31] (5.2,3.7) to (6.7,2.7) to[out=326.31,in=213.69] (7.3,2.7) to (8,3.167);
  \draw (8,1.7) to (5.2,1.7) to[out=180,in=213.69] (5.2,2.3) to (6.7,3.3) to[out=33.69,in=146.31] (7.3,3.3) to (8,2.833);
  \draw (0.5,1.5) to (0.5,4.5) to[out=90,in=90] (3.2,4.5) to (3.2,1.5) to[out=270,in=270] (0.5,1.5);
  \draw (1.8,0.8) to (1.8,5.2) to[out=90,in=180] (3.15,6) to[out=0,in=90] (4.5,5.2) to (4.5,0.8) to[out=270,in=0] (3.15,0) to[out=180,in=270] (1.8,0.8);
  \draw (3.5,1.5) to (3.5,4.5) to[out=90,in=90] (6.2,4.5) to (6.2,1.5) to[out=270,in=270] (3.5,1.5);
  \draw (7.5,0.8) to (7.5,5.2) to[out=90,in=0] (6.15,6) to[out=180,in=90] (4.8,5.2) to (4.8,0.8) to[out=270,in=180] (6.15,0) to[out=0,in=270] (7.5,0.8);
  \draw (8,0.7) to[out=180,in=270] (6.5,1.5) to (6.5,4.5) to[out=90,in=180] (8,5.3);
  \draw (0,0) to[out=0,in=270] (1.5,1) to (1.5,5) to[out=90,in=0] (0,6);
  \draw (2.3,4.7) to (5.7,4.7) to[out=0,in=33.69] (5.7,3.5) to (4.5,2.7) to[out=213.69,in=326.31] (3.5,2.7)
    to (2.3,3.5) to[out=146.31,in=180] (2.3,4.7);
  \draw (2.3,1.3) to (5.7,1.3) to[out=0,in=326.31] (5.7,2.5) to (4.5,3.3) to[out=146.31,in=33.69] (3.5,3.3)
    to (2.3,2.5) to[out=213.69,in=180] (2.3,1.3);
\end{scope}
\node[inner sep=0,outer sep=0] at (2.18,4) {$i_1$};
\node[inner sep=0,outer sep=0] at (5.85,4) {$i_2$};
\node[inner sep=0,outer sep=0] at (1.37,3) {$i_3$};
\node[inner sep=0,outer sep=0] at (3.8,2.75) {$i_4$};
\node[inner sep=0,outer sep=0] at (6.65,3) {$i_5$};
\node[inner sep=0,outer sep=0] at (2.18,2) {$i_6$};
\node[inner sep=0,outer sep=0] at (5.85,2) {$i_7$};
\node[inner sep=0,outer sep=0] at (0.25,3.6) {$e_{a_1}$};\draw [->] (0.25,3.7) to (0.25,4.3);
\node[inner sep=0,outer sep=0] at (4,4.2) {$e_{a_2}$};\draw [->] (4,4.3) to (4,4.7);
\node[inner sep=0,outer sep=0] at (7,3.6) {$e_{a_3}$};\draw [->] (7,3.7) to (7,4.3);
\node[inner sep=0,outer sep=0] at (1,4.5) {$e_{a_4}$};\draw [->] (1,4.6) to (1,5.13);
\node[inner sep=0,outer sep=0] at (3.15,5.2) {$e_{a_5}$};\draw [->] (3.15,5.3) to (3.15,6);
\node[inner sep=0,outer sep=0] at (5.1,1) {$e_{a_6}$};\draw [->] (5.08,1.08) to (6.03,1.08);
\node[inner sep=0,outer sep=0] at (6.15,5.2) {$e_{a_7}$};\draw [->] (6.15,5.3) to (6.15,6);
\node[inner sep=0,outer sep=0] at (0.25,2.4) {$e_{a_8}$};\draw [->] (0.25,2.3) to (0.25,1.7);
\node[inner sep=0,outer sep=0] at (4,1.8) {$e_{a_9}$};\draw [->] (4,1.7) to (4,1.3);
\node[inner sep=0,outer sep=0] at (7,2.4) {$e_{a_{10}}$};\draw [->] (7,2.3) to (7,1.7);
\begin{scope}
\path (0,0) to (8,0) to (8,6) to (0,6) to (0,0);
\end{scope}
\end{tikzpicture}
\subcaption{$4$-regular $2$-in-$3$ vertex cover}\label{related_problems_2_vertex_cover}
\end{subfigure}
\captionsetup{width=0.89\textwidth, font=small}
\caption{
The figure on the left shows the solution on a configuration corresponding to the solution in Figure \ref{f_related_problems}. We only denoted $a$-edges (small boxes, filled if they the $a$-edge takes the value one) and $i$-edges (small circles, filled if the $i$-edge takes the value one) instead of f-edges and v-edges for brevity (e.g. $h_{a_1,1}$ instead of $(a_1,h_{a_1,1})$).
The figure on the right illustrates the corresponding $2$-in-$3$ vertex cover (given by the filled circles).}\label{f_related_problems_2}
\end{figure}
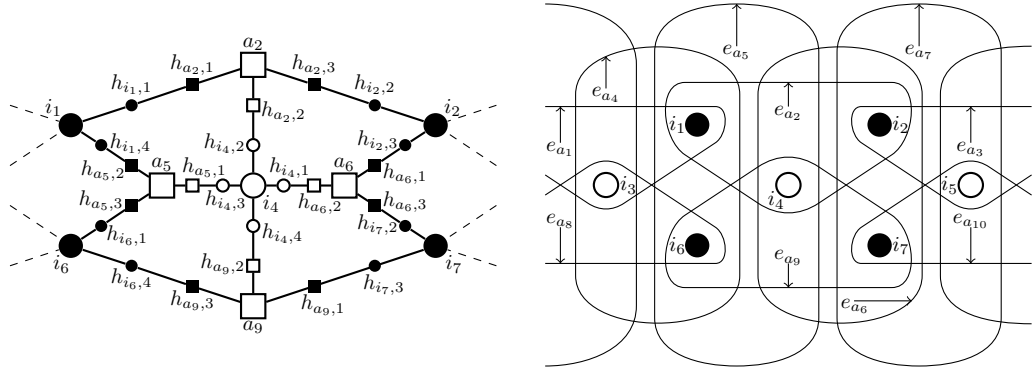
\subsection{The First Moment Method}\label{p_first_moment}
In the first step we apply the first moment method to the occupation problem on configurations, yielding the following result.
\begin{lemma}[First Moment Method]
\label{p_first_moment_results}
Let $k\in\mathbb Z_{\ge 4}$, $d\in\mathbb Z_{\ge 2}$.
For $n\in\mathcal N$ tending to infinity
\begin{flalign*}
\mathbb E[Z]&\sim
\sqrt{d}e^{n\phi_1}, \quad\textrm{where} \quad
\phi_1=\frac{d}{k}\ln\binom{k}{2}-(d-1)H\left(\frac2k\right)
\textrm{.}
\end{flalign*}
\end{lemma}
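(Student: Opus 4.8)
The plan is a direct first moment computation on the configuration model, carried out to enough precision to pin down the exact polynomial prefactor $\sqrt d$, not merely the exponential order.

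\emph{Reduction to a single assignment.} By linearity of expectation, $\mathbb E[Z]=\sum_{x\in\{0,1\}^{[n]}}\mathbb P[x\text{ is a solution of }G]$. Double counting the incidences between constraints and variables set to $1$ in any solution (each of the $m$ constraints contributes exactly $2$, and each variable set to $1$ lies in exactly $d$ constraints) forces $|\{i:x_i=1\}|=2m/d=2n/k=w_1^*n$, which is an integer precisely because $n\in\mathcal N$; assignments with any other number of ones contribute $0$. Moreover, since the law of $G$ is invariant under permutations of $[n]$, the probability $\mathbb P[x\text{ is a solution}]$ depends on $x$ only through its number of ones. Hence $\mathbb E[Z]=\binom{n}{w_1^*n}\,p_n$ where $p_n:=\mathbb P[x_0\text{ is a solution}]$ for an arbitrary fixed $x_0$ with exactly $w_1^*n$ ones.

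\emph{Counting favorable configurations.} A $d$-regular $k$-configuration is one of the $(dn)!=(km)!$ bijections between the $dn$ v-edges and the $km$ f-edges. For $x_0$ to be a solution one must, for each of the $m$ constraints, designate which $2$ of its $k$ f-edges are matched to v-edges at a $1$-variable ($\binom{k}{2}^{m}$ choices), then match the $2m$ v-edges at $1$-variables to the designated f-edges ($(2m)!$ ways) and the remaining $(k-2)m$ v-edges to the rest ($((k-2)m)!$ ways). Thus
\[
p_n=\frac{\binom{k}{2}^{m}\,(2m)!\,((k-2)m)!}{(km)!}.
\]

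\emph{Stirling and simplification.} Apply $N!\sim\sqrt{2\pi N}\,(N/e)^{N}$ to every factorial above and to $\binom{n}{w_1^*n}$ (all relative errors are $1+O(1/n)$). In $\binom{k}{2}^{m}(2m)!((k-2)m)!/(km)!$ the powers of $e$ and the powers of $m$ cancel, and using the two identities $-\ln w_2^*=\ln\binom{k}{2}$ and $e^{kH(w_1^*)}=k^{k}\big/\big(4(k-2)^{k-2}\big)$, both immediate from $w_1^*=2/k$ and $w_2^*=\binom{k}{2}^{-1}$, one obtains $p_n\sim(\mathrm{poly})\cdot e^{m(-\ln w_2^*)}e^{-mkH(w_1^*)}$, while $\binom{n}{w_1^*n}\sim(\mathrm{poly})\cdot e^{nH(w_1^*)}$. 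Collecting the polynomial prefactors, with $w_1^*(1-w_1^*)=2(k-2)/k^2$ and $m=dn/k$, they telescope to exactly $\sqrt{km/n}=\sqrt d$, and the exponents combine to
\[
nH(w_1^*)+m(-\ln w_2^*)-mkH(w_1^*)=\tfrac{d}{k}(-\ln w_2^*)\,n-(d-1)H(w_1^*)\,n=n\phi_1 ,
\]
which gives $\mathbb E[Z]\sim\sqrt d\,e^{n\phi_1}$.

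\emph{Main obstacle.} There is no deep difficulty: the constraints being ``determined'' makes the first moment an exact product formula. The only genuine work is the last step, namely tracking the polynomial factors carefully enough to land on the constant $\sqrt d$ rather than merely $\Theta(\sqrt n)$, and recognizing the closed form $\phi_1$ through the two entropy/$\binom{k}{2}$ identities. One should also verify that the restriction $n\in\mathcal N$ is exactly what makes $w_1^*n$ and $m$ integers, so that the formula for $p_n$ is valid in the first place.
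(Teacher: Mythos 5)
Your proposal is correct and follows essentially the same route as the paper: the identity $\mathbb E[Z]=\binom{n}{n_1}\binom{k}{2}^m(2m)!\,(dn-2m)!/(dn)!$ you derive (note $(dn-2m)!=((k-2)m)!$ and $(dn)!=(km)!$) is exactly the paper's count of pairs $(g,x)$, with the same combinatorial factors, and the Stirling asymptotics, including the cancellation of polynomial prefactors to $\sqrt{km/n}=\sqrt d$, match the paper's computation. The only presentational difference is that you phrase it as (number of valid assignments) times (probability a fixed assignment is a solution) rather than as $|\mathcal E|/|\mathcal G|$, which is the same calculation.
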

In particular, $\mathbb E[Z]\rightarrow\infty$ for $d<d^*$ and $\mathbb E[Z]\rightarrow 0$ for $d>d^*$ with $d^*$ as in \eqref{e_ws_ds_definition}.
So, Markov's inequality implies $\mathbb P[Z>0]\rightarrow 0$ for $d>d^*$. The map $\phi_1$ is known as \emph{annealed free entropy density}.
\subsection{The Second Moment Method}\label{p_second_moment}
Let $k\in\mathbb Z_{\ge 4}$ and $d\in\mathbb Z_{\ge 2}$.
We denote the set of distributions on a countable set $\mathcal S$ by $\mathcal P(\mathcal S)$ and identify $p\in\mathcal P(\mathcal S)$ with its probability mass function, meaning $\mathcal P(\mathcal S)=\{p\in[0,1]^{\mathcal S}:\sum_{x\in\mathcal S}p(s)=1\}$.
Further, let $\mathcal P_{\ell}(\mathcal S)=\{p\in\mathcal P(\mathcal S):\ell p\in\mathbb Z^{\mathcal S}\}$ be the empirical distributions over $\ell\in\mathbb Z_{>0}$ trials.

In order to apply the second moment method we will consider a (new) CSP with  $m$ factors on $n$ variables with the larger domain $\{0,1\}^2$, and where the constraint $a\in[m]$ is satisfied by an assignment $x\in(\{0,1\}^2)^n$ if $\sum_{i\in v(a)}x_{i,1}=\sum_{i\in v(a)}x_{i,2}=2$.
Here, there are qualitatively three types of satisfying assignments for the constraints, namely with $0$, $1$ or $2$ overlapping ones.
We will analyze the empirical \emph{overlap distributions}
$p\in\mathcal P(\{0,1,2\})$ of assignments satisfying all constraints, which determine the empirical distributions $p_{\mathrm{e}}\in\mathcal P_m(\{0,1\}^2)$ of the values $\{0,1\}^2$ over the $km$ edges, given by
\begin{flalign*}
p_{\mathrm{e}}(11)=\frac{1}{k}p(1)+\frac{2}{k}p(2)~~\textrm{ and }~~
p_{\mathrm{e}}(10)=p_{\mathrm{e}}(01)=\frac{1}{k}p(1)+\frac{2}{k}p(0)\textrm{.}
\end{flalign*}
So, if $p\in\mathcal P_m(\{0,1,2\})$ is indeed an achievable empirical distribution on the $m$ factors, then $p_{\mathrm{e}}$ needs to be an empirical distribution on the $n$ variables
and thus the attainable overlap distributions are 
$\mathcal P_n=\left\{p\in\mathcal P_m(\{0,1,2\}):p_{\mathrm{e}}\in\mathcal P_n(\{0,1\}^2)\right\}$.

In the first -- combinatorial -- part we establish that the second moment for $n\in\mathcal N$ can be written as a sum of contributions for fixed overlap distributions.
\begin{lemma}[Second Moment Combinatorics]
\label{p_second_moment_combinatorics}
For any $n\in\mathcal N$ we have
\begin{flalign*}
\mathbb E[Z^2]=\sum_{p \in \mathcal P_n}E(p),
\text{ where }
E(p)=\binom{m}{mp}\prod_{s\in\{0,1,2\}}\binom{k}{s,2-s,2-s,k-4+s}^{mp(s)}\binom{n}{np_{\mathrm{e}}}\binom{dn}{dnp_{\mathrm{e}}}^{-1}\textrm{.}
\end{flalign*}
Here, we use the notation $\binom{m}{mp}$, $p\in\mathcal P_m(\{0,1,2\})$, for multinomial coefficients.
\end{lemma}
To study further the second moment in Lemma \ref{p_second_moment_combinatorics} we identify the maximal contributions.
For this purpose, let $p^*\in\mathcal P(\{0,1,2\})$ be the hypergeometric distribution with
\begin{flalign*}
p^*(s)=\frac{\binom{2}{s}\binom{k-2}{2-s}}{\binom{k}{2}}  ~\textrm{ for }~ s\in\{0,1,2\},
\quad\textrm{ and }\quad
p^*_{\mathrm{e}}(11)=\frac{4}{k^2}, ~ p^*_{\mathrm{e}}(10)=\frac{2(k-2)}{k^2}\textrm{.}
\end{flalign*}
The overlap distribution $p^*$ is a natural candidate for maximizing $E(p)$.
Indeed, we obtain~$p^*$ when we consider two independent uniformly random assignments in $\{0,1\}^k$ with $2$ ones each, and $p^*_{\mathrm{e}}$ is exactly the marginal probability if we jointly consider two independent uniformly random assignments in $\{0,1\}^n$ to the variables with $2n/k$ ones each.
In the next step, we derive the limits of the log-densities $\frac{1}{n}\ln(E(p))$. Recall that the K(ullback)-L(eibler) divergence $\DKL{p}{q}$  of two distributions $p$, $q\in\mathcal P(\mathcal D)$ with finite support $\mathcal D$ is
\begin{flalign*}
\DKL{p}{q}=\sum_{x\in\mathcal D}p(x)\ln\left(\frac{p(x)}{q(x)}\right)\textrm{.}
\end{flalign*}
\begin{lemma}[Second Moment Asymptotics]
\label{p_second_moment_asymptotics}
For any fully supported $p\in\mathcal P(\{0,1,2\})$ and any sequence $(p_n)_{n\in\mathcal N}\subseteq\mathcal P_n$ with $\lim_{n\rightarrow\infty}p_n=p$ we have
$\lim_{n\rightarrow\infty}\frac{1}{n}\ln(E(p_n))=\phi_2(p)$, where
\begin{flalign*}
\phi_2(p)=2\phi_1-\frac{d}{k}\Delta_d(p)
\quad \text{and} \quad
\Delta_d(p)=\DKL{p}{p^*}-\frac{(d-1)k}{d}\DKL{p_{\mathrm{e}}}{p^*_{\mathrm{e}}}\textrm{.}
\end{flalign*}
\end{lemma}
The following proposition is the main contribution of this work.
\begin{proposition}[Second Moment Minimizers]
\label{p_second_moment_optimization}
For $k=4$ the global minimizers of $\Delta_{d^*(4)}$ are $p^*$, $p^{(0)}$ given by $p^{(0)}(0)=1$ and $p^{(2)}$ given by $p^{(2)}(2)=1$. For $k\in\mathbb Z_{\ge 5}$ the global minimizers of $\Delta_{d^*(k)}$ are $p^*$ and $p^{(2)}$.
\end{proposition}
With Proposition \ref{p_second_moment_optimization}, we easily verify that $p^*$ is the unique minimizer of $\Delta_d$ for any $d<d^*(k)$, since $\DKL{p_{\mathrm{e}}}{p^*_{\mathrm{e}}}$ is minimized by its unique root $p^*_{\mathrm{e}}$ and that $(d-1)k/d$ is increasing in $d$.
This conclusion then allows to compute the limit of the scaled second moment using Laplace's method for sums.
More than that, we confirm the conjecture by the authors in \cite{panagiotou2019} as an immediate corollary.
\begin{proposition}[Second Moment Limit]
\label{p_second_moment_results}
For any $k\in\mathbb Z_{\ge 4}$ and $d<d^*(k)$ 
\begin{flalign*}
\frac{\mathbb E[Z^2]}{\mathbb E[Z]^2}&\sim\sqrt{\frac{k-1}{k-d}},~~\textrm{ as }n\in\mathcal N\textrm{ tends to infinity.}
\end{flalign*}
\end{proposition}
Using Proposition \ref{p_second_moment_results} and the Paley-Zygmund inequality we see that
$\mathbb P[Z>0]\ge\sqrt{\frac{k-d}{k-1}}$. While this bound suggests a threshold exists, we need to show that the threshold at $d^*$ is \emph{sharp}.
\subsection{Small Subgraph Conditioning}\label{p_small_subgraph_conditioning}
We prove the theorem by applying the small subgraph conditioning method.
For this purpose let $(a)_b=\prod_{c=0}^{b-1}(a-c)$ denote the falling factorial.
\begin{theorem}[{Small Subgraph Conditioning, \cite[Theorem 2]{moore2016}}]
\label{small_subgraph_conditioning}
Let $Z_n$ and $X_{n,1},X_{n,2},\dots$ be non-negative integer-valued random variables.
Suppose that $\mathbb E[Z_n]>0$ and that
for each $\ell\in\mathbb Z_{>0}$ there are $\lambda_\ell\in\mathbb R_{>0}$, $\delta_\ell\in\mathbb R_{>-1}$ such that
for any $\bar\ell\in\mathbb Z_{>0}$
\begin{StatementList}
\item\label{small_subgraph_conditioning_cycle_dist}
 the variables $X_{n,1},\dots,X_{n,\bar\ell}$ are asymptotically independent and Poisson with
$\mathbb E[X_{n,\ell}]\sim\lambda_\ell$,
\item\label{small_subgraph_conditioning_joint_dist}
for any sequence $r_1,\dots,r_{\bar\ell}$ of non-negative integers,
\begin{flalign*}
\frac{\mathbb E\left[Z_n\prod_{\ell=1}^{\bar\ell}\left(X_{n,\ell}\right)_{r_\ell}\right]}{\mathbb E[Z_n]}\sim\prod_{\ell=1}^{\bar\ell}[\lambda_\ell(1+\delta_\ell)]^{r_\ell},
\end{flalign*}
\item\label{small_subgraph_conditioning_variance_explained}
we explain the variance, i.e.
\begin{flalign*}
\frac{\mathbb E[Z_n^2]}{\mathbb E[Z_n]^2}\sim\exp\left(\sum_{\ell \ge 1}\lambda_\ell\delta_\ell^2\right)
\quad\textrm{ and }\quad
\sum_{\ell \ge 1}\lambda_\ell\delta_\ell^2<\infty\textrm{.}
\end{flalign*}
\end{StatementList}
Then  $\lim_{n\rightarrow\infty}\mathbb P[Z_n>0]=1$.
\end{theorem}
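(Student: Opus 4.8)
\subparagraph*{Proof sketch.}
I would prove Theorem~\ref{small_subgraph_conditioning} along the classical small subgraph conditioning scheme~\cite{janson1995,janson2000,wormald1999}. Write $Z=Z_n$ and $X_\ell=X^{(n)}_\ell$ to stress the dependence on $n$, and set $\bar Z:=Z/\mathbb E[Z]$, so that $\mathbb E[\bar Z]=1$ and, by assumption~(c), $\mathbb E[\bar Z^2]\to e^{\Delta}$ with $\Delta:=\sum_{\ell\ge1}\lambda_\ell\delta_\ell^2<\infty$. For each truncation level $\bar\ell\in\mathbb Z_{>0}$ introduce the nonnegative random variables
\[
W^{(n)}_{\bar\ell}:=\prod_{\ell=1}^{\bar\ell}e^{-\lambda_\ell\delta_\ell}(1+\delta_\ell)^{X_\ell}
\qquad\text{and}\qquad
W_{\bar\ell}:=\prod_{\ell=1}^{\bar\ell}Y_\ell,\quad Y_\ell:=e^{-\lambda_\ell\delta_\ell}(1+\delta_\ell)^{P_\ell},
\]
where $(P_\ell)_{\ell\ge1}$ are independent with $P_\ell\sim\Po(\lambda_\ell)$. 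The plan is first to show that $\bar Z$ is approximated well by $W^{(n)}_{\bar\ell}$ in $L^2$, secondly to note that $W^{(n)}_{\bar\ell}\Rightarrow W_{\bar\ell}$ as $n\to\infty$ (a fixed continuous function of the asymptotically Poisson counts, by~(a)), and thirdly to show that $W_{\bar\ell}\to W_\infty$ for some $W_\infty>0$ almost surely. For the third point, the identity $\mathbb E[t^{P_\ell}]=e^{\lambda_\ell(t-1)}$ shows that $(W_{\bar\ell})_{\bar\ell}$ is a nonnegative martingale with $\mathbb E[W_{\bar\ell}]=1$ and $\mathbb E[W_{\bar\ell}^2]=e^{\sigma_{\bar\ell}}$, $\sigma_{\bar\ell}:=\sum_{\ell\le\bar\ell}\lambda_\ell\delta_\ell^2$; being bounded in $L^2$ it converges almost surely and in $L^2$ to a limit $W_\infty$, and since $\mathbb E[\sqrt{Y_\ell}]=\exp(-\lambda_\ell(1+\tfrac{\delta_\ell}{2}-\sqrt{1+\delta_\ell}))$ with $0\le1+\tfrac{\delta}{2}-\sqrt{1+\delta}\le C\delta^2$ uniformly on $[-1,\infty)$, one gets $\prod_\ell\mathbb E[\sqrt{Y_\ell}]\ge e^{-C\Delta}>0$, so Kakutani's product criterion forces $W_\infty>0$ almost surely.

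The heart of the proof is the $L^2$ estimate
\[
\mathbb E\big[(\bar Z-W^{(n)}_{\bar\ell})^2\big]\ \longrightarrow\ e^{\Delta}-e^{\sigma_{\bar\ell}}\ =:\ \varepsilon_{\bar\ell}\qquad(n\to\infty),
\]
where $\varepsilon_{\bar\ell}\downarrow0$ as $\bar\ell\to\infty$ since $\sigma_{\bar\ell}\uparrow\Delta$. To establish it I would expand $(1+\delta_\ell)^{x}=\sum_{r\ge0}\tfrac{\delta_\ell^{r}}{r!}(x)_r$ (a finite sum for each integer $x$), writing $W^{(n)}_{\bar\ell}$ as a series in the joint falling factorials $\prod_\ell(X_\ell)_{r_\ell}$. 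Then the joint factorial-moment convergence in~(a) yields $\mathbb E[(W^{(n)}_{\bar\ell})^2]\to e^{\sigma_{\bar\ell}}$, while~(b) yields $\mathbb E[\bar Z\,W^{(n)}_{\bar\ell}]\to e^{-\sum_{\ell\le\bar\ell}\lambda_\ell\delta_\ell}\prod_{\ell\le\bar\ell}e^{\delta_\ell\mu_\ell}=e^{\sigma_{\bar\ell}}$, using $\mu_\ell=\lambda_\ell(1+\delta_\ell)$ and hence $\delta_\ell\mu_\ell=\lambda_\ell\delta_\ell+\lambda_\ell\delta_\ell^2$; together with $\mathbb E[\bar Z^2]\to e^{\Delta}$ this gives the estimate. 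The step I expect to be the main obstacle is exactly this interchange of $\lim_{n\to\infty}$ with the summation over $(r_\ell)$, i.e.\ passing to the limit through the \emph{unbounded} statistics $W^{(n)}_{\bar\ell}$ and $\bar Z\,W^{(n)}_{\bar\ell}$. I would handle it by truncating the expansion at a level $J$ independent of $n$: for fixed $J$ the truncated statistic is a polynomial in the $X_\ell$, so~(a) and~(b) apply to it verbatim, whereas the remainder is controlled uniformly in $n$ by the factorial-moment bounds for the $X_\ell$ that underlie~(a) together with the absolute convergence $\Delta<\infty$ in~(c); letting $J\to\infty$ in the resulting explicit Poisson expressions then closes the estimate.

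Granting the $L^2$ estimate, the theorem follows by Chebyshev's inequality. Fix $\varepsilon>0$ and choose $\eta\in(0,1)$ that is a common continuity point of $W_\infty$ and of all the $W_{\bar\ell}$ (this excludes only countably many values) and satisfies $\mathbb P[W_\infty\le\eta]<\varepsilon$; this is possible because $W_\infty>0$ almost surely. Since $\{Z=0\}\cap\{W^{(n)}_{\bar\ell}>\eta\}\subseteq\{|\bar Z-W^{(n)}_{\bar\ell}|>\eta\}$, for all $\bar\ell$ and $n$
\[
\mathbb P[Z=0]\ \le\ \mathbb P\big[W^{(n)}_{\bar\ell}\le\eta\big]+\eta^{-2}\,\mathbb E\big[(\bar Z-W^{(n)}_{\bar\ell})^2\big].
\]
Letting $n\to\infty$, using $W^{(n)}_{\bar\ell}\Rightarrow W_{\bar\ell}$, gives $\limsup_n\mathbb P[Z=0]\le\mathbb P[W_{\bar\ell}\le\eta]+\eta^{-2}\varepsilon_{\bar\ell}$; then letting $\bar\ell\to\infty$, using $W_{\bar\ell}\Rightarrow W_\infty$ and $\varepsilon_{\bar\ell}\to0$, gives $\limsup_n\mathbb P[Z=0]\le\mathbb P[W_\infty\le\eta]<\varepsilon$. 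Since $\varepsilon$ was arbitrary, $\mathbb P[Z=0]\to0$, i.e.\ $\mathbb P[Z>0]\to1$.
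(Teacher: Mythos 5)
Your argument is a correct outline of the classical proof of the small subgraph conditioning theorem, but note that the paper itself never proves this statement: Theorem~\ref{small_subgraph_conditioning} is imported as a known tool from \cite{janson1995,janson2000}, and the section of the paper carrying this name is devoted entirely to \emph{verifying} hypotheses (a)--(c) for the specific $Z$ (number of solutions) and $X_\ell$ (cycle counts) of the occupation problem --- computing $\mathbb E[ZX_\ell]/\mathbb E[Z]$, extracting $\delta_\ell=(-1/(k-1))^\ell$, and matching $\sum_\ell\lambda_\ell\delta_\ell^2$ against the second moment ratio. So you have supplied the black box rather than reproduced an argument from the text. Your scheme --- approximating $\bar Z=Z/\mathbb E[Z]$ in $L^2$ by $W^{(n)}_{\bar\ell}$, passing to the Poisson limit $W_{\bar\ell}$, and using the $L^2$-bounded martingale together with Kakutani's criterion (where $\delta_\ell>-1$ enters to make each $Y_\ell>0$, so that $\{W_\infty=0\}$ is a tail event of mean-one limit, hence null) --- is exactly the standard route, and the concluding Chebyshev step with a common continuity point $\eta$ is carried out correctly.

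The one place that deserves scrutiny is the one you flag yourself. Hypothesis (a) as literally stated is joint convergence in distribution to independent Poissons plus convergence of first moments; to get $\mathbb E[(W^{(n)}_{\bar\ell})^2]\to e^{\sigma_{\bar\ell}}$ you apply the unbounded test function $x\mapsto((1+\delta_\ell)^2)^{x}$ (and in this paper $\delta_\ell>0$ for even $\ell$, so $W^{(n)}_{\bar\ell}$ really is unbounded), and similarly for $\mathbb E[\bar Z\,W^{(n)}_{\bar\ell}]$ via (b). Termwise convergence of the factorial-moment expansion for each fixed $r$ does not by itself produce a summable majorant for the tail uniformly in $n$; one needs either a uniform bound of the form $\sup_n\mathbb E\bigl[\prod_\ell(X_\ell)_{r_\ell}\bigr]\le\prod_\ell C_\ell^{r_\ell}$ (which the paper's method-of-moments proof of Theorem~\ref{s_number_of_cycles} and its Section~\ref{s_small_subgraph_conditioning} analogue do furnish, but which is a strengthening of (a) and (b) as stated), or one should follow Janson's variant of the argument that works throughout with \emph{finite} polynomial truncations $\sum_{r}a_r\prod_\ell(X_\ell)_{r_\ell}$, so that only finitely many factorial moments are ever invoked for finite $n$ and the exponential object appears only on the Poisson side of the limit. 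Making one of these two options explicit would close the sketch; as written, the remainder estimate is asserted rather than proved.
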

We will apply Theorem \ref{small_subgraph_conditioning} to the number $Z$ of solutions from Section \ref{p_configuration_model} and the numbers $X_\ell$ of small cycles in the configuration $G$.
In order to understand what a cycle in a configuration is, we recall the representation of a configuration $g$ as a four-partite graph from Section \ref{p_configuration_model}.
Since we are mostly interested in the factor graph associated with a configuration we divide lengths of paths by three, e.g.~what we call a cycle of length four in the bijection, is actually a cycle of length twelve in its equivalent four-partite graph representation.
Figures \ref{f_related_problems_occupation} and \ref{f_related_problems_2_configuration} show an example of a factor graph and the corresponding configuration in its graph representation. Showing the following statement, which establishes Assumption \ref{small_subgraph_conditioning}\ref{small_subgraph_conditioning_cycle_dist}, is rather routine.
\begin{lemma}[Small Cycles]
\label{lem_number_of_cycles}
For $\ell\in\mathbb Z_{>0}$ let $X_\ell$ be the number of $2\ell$-cycles in $G$, and set
\begin{flalign*}
\lambda_\ell=\frac{[(k-1)(d-1)]^\ell}{2\ell}.
\end{flalign*}
Then $X_1,\dots,X_{\bar\ell}$ are asymptotically independent and Poisson with $\mathbb E[X_\ell]\sim\lambda_\ell$ for all $\bar\ell\in\mathbb Z_{>0}$.
\end{lemma}
We give a self-contained proof of Lemma \ref{lem_number_of_cycles} in the appendix,
which we build upon to argue that Assumption \ref{small_subgraph_conditioning}\ref{small_subgraph_conditioning_joint_dist} in Theorem \ref{small_subgraph_conditioning} holds.
With Lemma \ref{lem_number_of_cycles} in place, we consider the base case in Assumption \ref{small_subgraph_conditioning}\ref{small_subgraph_conditioning_joint_dist}, i.e.~for $\ell\in\mathbb Z_{>0}$ we let $r_\ell=1$ and $r_{\ell'}=0$ otherwise, to determine $\delta_\ell=(1-k)^{-\ell}$.
We easily verify that $\sum_{\ell\ge 1}\lambda_\ell\delta_\ell^2=\frac12\ln(\frac{k-1}{k-d})$ and thereby establish Assumption \ref{small_subgraph_conditioning}\ref{small_subgraph_conditioning_variance_explained} using Proposition \ref{p_second_moment_results}.
Finally, we follow the proof of Lemma \ref{lem_number_of_cycles} to complete the verification of Assumption \ref{small_subgraph_conditioning}\ref{small_subgraph_conditioning_joint_dist} and thereby complete the proof of Theorem \ref{theorem_satisfiability_threshold}.
%
%
%
%
%
%
\section{Preliminaries and Notation}\label{m_preliminaries}
After introducing notation in Section \ref{m_notation}, we establish a few basic facts in Section \ref{m_occupation_problem_configuration}.
%
%
\subsection{Notation}\label{m_notation}
We use the notation
$[n]=\{1,\dots,n\}$ for $n\in\mathbb Z_{>0}$,
denote the falling factorial (or $k$-factorial) with $(n)_k$ for $n,k\in\mathbb Z_{\ge 0}$, $k\le n$, and
multinomial coefficients with $\binom{n}{k}$ for $n\in\mathbb Z_{\ge 0}$ and $k\in\mathbb Z_{\ge 0}^d$, $d\in\mathbb Z_{>1}$, such that $\sum_{i\in[d]}k_i=n$.
For functions $f$, $g$ on integers with $\lim_{n\rightarrow\infty}{f(n)}/{g(n)}=1$ we write $f(n)\sim g(n)$.
We make heavy use of Stirling's formula \cite{robbins1955}, i.e.
\begin{flalign*}
\sqrt{2\pi n}\left(\frac{n}{e}\right)^ne^{\frac{1}{12n+1}}\le n!\le
\sqrt{2\pi n}\left(\frac{n}{e}\right)^ne^{\frac{1}{12n}},
\quad n\in\mathbb Z_{>0},
\end{flalign*}
and in particular $n!\sim \sqrt{2\pi n}(\frac{n}{e})^n$.
If a random variable $X$ has law $P$ we write $X\sim P$ and use $\Po(\lambda)$ to denote the Poisson distribution with parameter $\lambda$.
We use
`wlog' for `without loss of generality',
`a.s.'~for `almost surely', i.e.~$P(\mathcal E)=0$ for an event $\mathcal E$, and 
`a.a.s.'~for `asymptotically almost surely', i.e.~$\lim_{n\rightarrow\infty}P(\mathcal E_n)=1$ for a sequence of events $\mathcal E_n$.
Distributions $p\in\mathcal P(\mathcal S)$ in the convex polytope $\mathcal P(\mathcal S)$ of distributions with finite support $\mathcal S$ are identified with their probability mass functions $p\in[0,1]^{\mathcal S}$.
Further, $\mathcal P_n(\mathcal S)=\{p\in\mathcal P(\mathcal S):np\in[n]_0\}$ denotes the set of empirical distributions obtained from $n\in\mathbb Z_{\ge 1}$ trials.
Finally, let $v^{\mathrm{t}}$ denote the transpose of a vector $v$.
%
%
\subsection{Basic Observations}\label{m_occupation_problem_configuration}
We briefly establish the claims in Section \ref{s_occupation_problems} for the configuration version, and the claim that $d^*$ is not an integer.
\begin{lemma}\label{lem_satthresh_basics}
The set $\mathcal G$ is empty iff $dn\neq km$, so let $dn=km$.
Then,  we have $Z=0$ almost surely if $n_1=2n/k\not\in\mathbb Z$, for $Z$ as defined in this section.
Finally, $d^*\in(1,\infty)\setminus\mathbb Z$.
\end{lemma}
\begin{proof}
Since $g\in\mathcal G$ is a bijection $g:[n]\times[d]\rightarrow[m]\times[k]$, the set $\mathcal G$ is empty for $dn\neq km$ and $|\mathcal G|=(dn)!=(km)!$ otherwise, which proves the first assertion.
Next, we fix a solution $x\in\{0,1\}^n$ of $g\in\mathcal G$ with $n'_1$ ones.
Then two $a$-edges $h$ have to take the value one, i.e.~$x_{i(a,h)}=1$, for each $a\in F$ and hence $2m$ f-edges $(a,h)\in[m]\times[k]$ in total.
On the other hand, there are $dn'_1$ v-edges $(i,h)\in[n]\times[d]$ that take the value one.
Since $g$ is a bijection, $dn'_1=2m$, so $n_1=n'_1\in\mathbb Z$.

For the last assertion, we first focus on the denominator of $d^*$, i.e.
\begin{flalign*}
kH\left(\frac{2}{k}\right)-\ln\binom{k}{2}=-\ln\left(\binom{k}{2}\left(\frac{2}{k}\right)^2\left(\frac{k-2}{k}\right)^{k-2}\right)>0\textrm{,}
\end{flalign*}
so $d^*>0$ for $k\in\mathbb Z_{\ge 3}$.
Next, notice that $d^*$ is a solution of $f(d)=1$ with
\begin{flalign*}
f(d)=e^{(d-1)(kH(2/k)-\ln\binom{k}{2})-\ln\binom{k}{2}}=\frac{2}{k(k-1)}\left(\frac{k^{k-1}}{2(k-2)^{k-2}(k-1)}\right)^{d-1}\textrm{,}
\end{flalign*}
which directly implies that $d^*>1$
and further, since $\gcd(k,k-1)=1$, that $d^*\in(1,\infty)\setminus\mathbb Z$.
\end{proof}
\section{The First Moment Method -- Proof of Lemma~\ref{p_first_moment_results}}
\label{s_first_moment}
This short section is dedicated to the proof of Lemma \ref{p_first_moment_results}.
We write the expectation in terms of the number $|\mathcal E|$ of pairs $(g,x)\in\mathcal E$ such that $x\in\{0,1\}^n$ satisfies $g\in\mathcal G$, i.e.
\begin{flalign*}
\mathbb E[Z]
=\frac{|\mathcal E|}{|\mathcal G|}
=\frac{1}{(dn)!}\binom{n}{n_1}\binom{k}{2}^m(2m)!(dn-2m)!\textrm{,}
\end{flalign*}
with $n_1=2k^{-1}n$ and for the following reasons.
First, we choose the $n_1$ variables with value one in~$x$,
then we choose the two $a$-edges for each constraint $a\in[m]$ with value one,
wire the v-edges and f-edges with value one
and finally wire the edges with value zero.
In particular, this implies that $\mathbb E[Z]>0$ for all $n\in\mathcal N$.
Stirling's formula yields after some straightforward but tedious manipulations 
$\mathbb E[Z]\sim
\sqrt{d}e^{n\phi_1}$,
as claimed.
%
%
%
\section{The Second Moment Method}\label{s_second_moment}
In this section we consider the case $d<d^*$.
We prove Lemma \ref{p_second_moment_combinatorics}, Lemma \ref{p_second_moment_asymptotics}, Proposition \ref{p_second_moment_optimization} and Proposition \ref{p_second_moment_results}, the main contribution of this work.
\subsection{How to Square a Constraint Satisfaction Problem}\label{p_second_moment_CSP}
In order to facilitate the presentation we introduce
the \emph{squared} $d$-regular $2$-in-$k$ occupation problem.
As before, an instance of this problem is given by a bijection $g:[n]\times[d]\rightarrow[m]\times[k]$.
Now, for an assignment $x\in(\{0,1\}^2)^n$ let
$y_{g,x}=(x_{i(a,h)})_{a\in[m],h\in[k]}$ be the corresponding f-edge assignment under $g$,
where we recall from Section \ref{p_configuration_model} that $i(a,h)=(g^{-1}(a,h))_1\in[n]$ is the variable $i(a,h)$ wired to the f-edge $(a,h)$ under~$g$.
A constraint $a\in[m]$ is satisfied by a constraint assignment $x\in(\{0,1\}^2)^k$ iff $x\in\mathcal S^{(2)}$, where
\begin{flalign*}
\mathcal S^{(2)}
=\left\{x\in(\{0,1\}^2)^k:\sum_{h\in[k]}x_{h,1}=\sum_{h\in[k]}x_{h,2}=2\right\}\textrm{.}
\end{flalign*}
An f-edge assignment $x\in(\{0,1\}^2)^{[m]\times[k]}$ is satisfying if $x_a=(x_{a,h})_{h\in[k]}$ satisfies $a$ for all $a\in[m]$.
Finally, an assignment $x\in(\{0,1\}^2)^n$ is a solution of $g$ if $y_{g,x}$ is satisfying.
Notice that the pairs of solutions $x$, $x'\in\{0,1\}^n$ of the standard problem on $g$ are in one to one correspondence with the solutions $y\in(\{0,1\}^2)^n$ of the squared problem on $g$ via $y=(x_i,x'_i)_{i\in[n]}$.
So, $z^{(2)}(g)=z(g)^2$ for the number $z^{(2)}(g)$ of solutions of the squared problem, hence $Z^{(2)}=Z^2$ for $Z^{(2)}=z^{(2)}(G)$ and in particular $\mathbb E[Z^{(2)}]=\mathbb E[Z^2]$.
This equivalence allows us to entirely focus on the squared problem.
\subsection{Proof of Lemma \ref{p_second_moment_combinatorics}}\label{p_second_moment_combinatorics_proof}
As before, we can write $\mathbb E[Z^{(2)}]=\frac{1}{(dn)!}|\mathcal E|$, where $|\mathcal E|$ is the number of pairs $(g,x)\in\mathcal E$ such that $x\in(\{0,1\}^2)^n$ solves $g$.
Set
\begin{flalign*}
\mathcal Y=\left\{y\in(\{0,1\}^2)^{[m]\times[k]}:y_a\in\mathcal S^{(2)}~\textrm{for all}~a\in[m]\right\}.
\end{flalign*}
For $y\in\mathcal Y$ let the \emph{overlap distribution} $p_y\in\mathcal P_m(\{0,1,2\})$ be given by
\begin{flalign*}
p_y(s)=\frac{1}{m}|\{a\in[m]:|y_a^{-1}(1,1)|=s\}|, \quad s\in\{0,1,2\}.
\end{flalign*}
Further, let the \emph{edge distribution} $q_y\in\mathcal P_{km}(\{0,1\}^2)$ be given by
\begin{flalign*}
q_y(x)=\frac{1}{km}|\{(a,h)\in[m]\times[k]:y_{a,h}=x\}|=\frac{1}{km}|y^{-1}(x)|,\quad  x\in\{0,1\}^2\textrm{.}
\end{flalign*}
Using that $|y_a^{-1}(1,0)|=|y_a^{-1}(0,1)|=2-|y_a^{-1}(1,1)|$ and hence $|y^{-1}(0,0)|=k-4+|y^{-1}(1,1)|$
we directly get
\begin{flalign*}
q_y(1,1)&=\frac{1}{km}\sum_{a\in[m]}|y_a^{-1}(1,1)|=\frac{1}{km}\sum_{s\in\{0,1,2\}}s|\{a\in[m]:|y_a^{-1}(1,1)|=s\}|=\sum_{s\in\{0,1,2\}}\frac{s}{k}p_y(s)\textrm{,}\\
q_y(1,0)&=q_y(0,1)=\frac{1}{km}\sum_{s\in\{0,1,2\}}(2-s)|\{a\in[m]:|y_a^{-1}(1,1)|=s\}|=\sum_{s\in\{0,1,2\}}\frac{2-s}{k}p_y(s)\textrm{,}\\
q_y(0,0)&=\sum_{s\in\{0,1,2\}}\frac{k-4+s}{k}p_y(s)\textrm{.}
\end{flalign*}
Hence, let $p_{\mathrm{e}}=Wp\in\mathcal P(\{0,1\}^2)$ denote the edge distribution of any (not necessarily empirical) overlap distribution $p\in\mathcal P(\{0,1,2\})$, where $W\in[0,1]^{\{0,1\}^2\times\{0,1,2\}}$ is given by
\begin{flalign}\label{eq_Wdefinition}
W_{11,s}=\frac{s}{k}\textrm{, }
~~W_{10,s}=W_{01,s}=\frac{2-s}{k}
~\textrm{ and }~W_{00,s}=\frac{k-4+s}{k},
\quad s\in\{0,1,2\}\textrm{.}
\end{flalign}
Now, notice that for any $(g,x)\in\mathcal E$ we have $y_{g,x,a,h}=x_{i(a,h)}$ for all $a\in[m]$, $h\in[k]$, hence
$g(x^{-1}(z)\times[d])=y_{g,x}^{-1}(z)$ and by that
\begin{flalign*}
q_{y_{g,x}}(z)=\frac{|y_{g,x}^{-1}(z)|}{km}=\frac{d|x^{-1}(z)|}{km}=\frac{1}{n}|x^{-1}(z)|=q_{x}(z)\textrm{ for }z\in\{0,1\}^2\textrm{,}
\end{flalign*}
i.e.~the relative frequencies of the values in the f-edge assignment $y_{g,x}$ coincide with the relative frequencies of the values in the variable assignment $x$.
In particular, this shows that a satisfying f-edge assignment $y\in\mathcal Y$ is only \emph{attainable} if $q_y\in\mathcal P_n(\{0,1\}^2)$.
This shows that
\begin{align*}
\mathbb E[Z^2]=\sum_{p\in\mathcal P_n}|\{(g,x)\in\mathcal E:p_{y_{g,x}}=p\}|.
\end{align*}
Now, fix an attainable satisfying f-edge assignment $y\in\mathcal Y$ and an assignment $x\in(\{0,1\}^2)^n$ with $q_x=q_y$, i.e.~$|x^{-1}(z)\times[d]|=|y^{-1}(z)|$ for all $z\in\{0,1\}^2$.
Any bijection $g$ with $y=y_{g,x}$ needs to respect $g(x^{-1}(z)\times[d])=y_{g,x}^{-1}(z)$ for $z\in\{0,1\}^2$ and can hence be uniquely decomposed into its restrictions $g_z:x^{-1}(z)\times[d]\rightarrow y_{g,x}^{-1}(z)$. On the other hand, any choice of such restrictions $g_z$ gives a bijection $g$ with $y=y_{g,x}$, and so 
\begin{flalign*}
|\mathcal E_{x,y}|&=\prod_{z\in\{0,1\}^2}(dnq_x(z))!=\prod_{z\in\{0,1\}^2}(dnp_{y,\mathrm{e}}(z))!~~\textrm{, where }~
\mathcal E_{x,y}=\{(g,x)\in\mathcal E:y_{g,x}=y\}\textrm{.}
\end{flalign*}
Notice that $\mathcal E_{x,y}\cap\mathcal E_{x',y'}=\emptyset$ for any $(x,y)\neq(x',y')$,
which is obvious for $x\neq x'$, and also for $y\neq y'$, since $y_{g,x}=y\neq y'=y_{g',x}$ implies that $g\neq g'$.
But since $|\mathcal E_{x,y}|$ only depends on $p_{y}$ (actually only on $p_{y,\mathrm{e}}$) this completes the proof, because for any fixed attainable overlap distribution $p\in\mathcal P_n$,  we can now independently choose the satisfying f-edge assignment $y$ and variable assignment $x$, subject to $q_x=p_{\mathrm{e}}$ and $p_y=p$ (which implies $q_y=q_x$). So we have $\mathbb E[Z^{(2)}]=\sum_{p}E(p)$ with $p\in\mathcal P_n$ and
\begin{flalign*}
E(p)=\frac{1}{(dn)!}\binom{n}{np_{\mathrm{e}}}\binom{m}{mp}\prod_{s\in\{0,1,2\}}\binom{k}{s,2-s,2-s,k-4+s}^{mp(s)}\prod_{x\in\{0,1\}^2}(dnp_{\mathrm{e}}(z))!\textrm{,}
\end{flalign*}
where we choose a variable assignment $x$ with $q_x=p_{\mathrm{e}}$, an f-edge assignment $y$ with $p_y=p$ by first choosing one of the $\binom{m}{mp}$ options for $(|y_a^{-1}(1,1)|)_{a\in[m]}$ and then independently one of the $\binom{k}{s,2-s,2-s,k-4+s}$ satisfying constraint assignments for each of the $mp(s)$ constraints with overlap $s\in\{0,1,2\}$, and finally choosing a bijection $g$ with $(g,x)\in\mathcal E_{x,y}$.
\subsection{Empirical Overlap Distributions}\label{p_second_moment_empirical_overlap_distributions}
This section is dedicated to deriving properties of the set $\mathcal P_n$ for $n\in\mathcal N$.
In the following we will use the canonical ascending order on $\{0,1,2\}$ to denote points in $\mathbb R^{\{0,1,2\}}$ and the ascending lexicographical order on $\{0,1\}^2$ to denote points in $\mathbb R^{\{0,1\}^2}$.
Recall that $p^{(s)}\in\mathcal P(\{0,1,2\})$ given by $p^{(s)}(s)=1$ for $s\in\{0,1,2\}$ denote the corners of the convex polytope $\mathcal P(\{0,1,2\})$ and further consider the vectors in $\mathbb R^{\{0,1,2\}}$
\begin{align}\label{eq_basisdef}
	b_1=(-d,d,0)^{\mathrm{t}}, \quad b_2=(1,-2,1)^{\mathrm{t}}.
\end{align}
Finally, let
\[
	\mathcal X=\Big\{x\in\mathbb R^2:(b_1,b_2)x\ge-p^{(0)}\Big\},
	\quad
	\mathcal X_n=\mathcal X\cap(m^{-1}\mathbb Z)^2.
\]
\begin{lemma}\label{p_second_moment_empirical_overlap_distributions_characterization}
The map $\iota_n:\mathcal X_n\rightarrow\mathcal P_n$, $x\mapsto p^{(0)}+(b_1,b_2)x$ is a bijection.
\end{lemma}
\begin{proof}
With the shorthand $1_{\{0,1,2\}}=(1)_{s\in\{0,1,2\}}$, further
\begin{flalign*}
1_{\{0,1,2\}}^{\perp}&=\left\{x\in\mathbb R^{\{0,1,2\}}:1_{\{0,1,2\}}^{\mathrm{t}}x=0\right\}
=\left\{x\in\mathbb R^{\{0,1,2\}}:\sum_{s\in\{0,1,2\}}x_s=0\right\}.
\end{flalign*}
Note that $\mathcal P(\{0,1,2\})\subseteq p^{(0)}+1_{\{0,1,2\}}^{\perp} = \{p^{(0)}+x:x\in 1_{\{0,1,2\}}^{\perp}\}$. On the other hand, $(b_1,b_2)$ is a basis of $1_{\{0,1,2\}}^{\perp}$, and hence
\begin{equation}
\label{eq:iota}
	\iota:\mathbb R^2\rightarrow p^{(0)}+1_{\{0,1,2\}}^{\perp}, \quad x\mapsto p^{(0)}+(b_1,b_2)x, 
\end{equation}
is bijective.
This gives that $\iota(\mathcal X)=\mathcal P(\{0,1,2\})$ and that $\iota_n$ is the restriction of $\iota$ to $\mathcal X_n$, so $\iota_n$ is a bijection from $\mathcal X_n$ to $\mathcal P(\{0,1,2\})\cap\iota\left((m^{-1}\mathbb Z)^2\right)$.
Consequently, it remains to show that $\mathcal P_n=\mathcal P(\{0,1,2\})\cap\iota\left((m^{-1}\mathbb Z)^2\right)$, where
\begin{flalign*}
\iota\left((m^{-1}\mathbb Z)^2\right)=\left\{p^{(0)}+\frac{i_1}{m}b_1+\frac{i_2}{m}b_2:i\in\mathbb Z^2\right\}
\end{flalign*}
is a grid anchored at $p^{(0)}$ and spanned by $m^{-1}b_1$ and $m^{-1}b_2$.
Note that 
\begin{flalign*}
p^{(0)}_{\mathrm{e}}=\left(\frac{k-4}{k},\frac{2}{k},\frac{2}{k},0\right)^{\mathrm{t}}\textrm{,}
\end{flalign*}
so $np^{(0)}_{\mathrm{e}}\in\mathbb Z^{\{0,1\}^2}$ since $n\in\mathcal N$, and hence $p^{(0)}\in\mathcal P_n$ by the definition of $\mathcal P_n$.
Next, we show that $\mathcal P_n$ is on the grid, i.e.~$\mathcal P_n\subseteq\iota\left((m^{-1}\mathbb Z)^2\right)$.
For this purpose fix $p\in\mathcal P_n$ and let $x=\iota^{-1}(p)$, i.e.~$mp\in\mathbb Z^{\{0,1,2\}}$, $n(Wp)\in\mathbb Z^{\{0,1\}^2}$ and $p=p^{(0)}+x_1b_1+x_2b_2$. This directly gives $mx_2=mp(2)\in\mathbb Z$. Further, we notice that $b_2$ is in the kernel of $W$ from Equation \eqref{eq_Wdefinition}, i.e.~$Wb_2=0_{\{0,1\}^2}$, and $Wb_1=\frac{d}{k}w$ with $w=(1,-1,-1,1)^{\mathrm{t}}$.
This directly gives $p_{\mathrm{e}}(1,1)=0+\frac{d}{k}x_1+0$ and hence $mx_1=np_{\mathrm{e}}(1,1)\in\mathbb Z$, i.e.~$x\in(m^{-1}\mathbb Z)^2$ and hence $p=\iota(x)\in\iota\left((m^{-1}\mathbb Z)^2\right)$.
Conversely, for any $x\in\mathcal X_n$ and with $p=\iota(x)$ we have $p\in\mathcal P(\{0,1,2\})$ since $x\in\mathcal X$, further $mp=mp^{(0)}+(b_1,b_2)(mx)\in\mathbb Z^{\{0,1,2\}}$ since $mx\in\mathbb Z^2$ and the other terms on the right-hand side are integer valued by definition, and finally $np_{\mathrm{e}}=np^{(0)}_{\mathrm{e}}+mx_1w\in\mathbb Z^{\{0,1\}^2}$.
\end{proof}
Using Lemma \ref{p_second_moment_empirical_overlap_distributions_characterization} we have
$\mathbb E[Z^{(2)}]=\sum_{x\in\mathcal X_n}E(\iota_n(x))$, where $\mathcal X_n\subseteq\mathbb R^2$ may be considered as a normalization of the grid $\mathcal P_n\subseteq p^{(0)}+1_{\{0,1,2\}}^{\mathrm{t}}$.
In order to prepare the upcoming asymptotics of the second moment, we give a complete characterization of the convex polytope $\mathcal X$ and the image of $\mathcal X$ under $W(b_1,b_2)$, i.e.~the image $p_{\mathrm{e}}=Wp$ of $p\in\mathcal P(\{0,1,2\})$ under $W$ from Equation \eqref{eq_Wdefinition}.
Let $w=(1,-1,-1,1)^{\mathrm{t}}$ from the proof of Lemma \ref{p_second_moment_empirical_overlap_distributions_characterization}, and set
\[
	\mathcal W = \Big\{p^{(0)}_{\mathrm{e}}+yw:y\in[0,2/k]\Big\}\subseteq\mathcal P(\{0,1\}^2),
	\quad \mathcal X_p=\left\{x\in\mathcal X:x_1=\frac{k}{d}p(1,1)\right\} \text{ for } p\in\mathcal W.
\]
Moreover, recall the bijection $\iota$ from~\eqref{eq:iota} and the point and
\begin{flalign*}
x^{(0)}=\begin{pmatrix}0\\ 0\end{pmatrix}\textrm{, }
x^{(1)}=d^{-1}\begin{pmatrix}1\\ 0\end{pmatrix}\textrm{, }
x^{(2)}=d^{-1}\begin{pmatrix}2\\ d\end{pmatrix}\in\mathbb R^2~\textrm{ and }~
x^*=\iota^{-1}(p^*)\textrm{.}
\end{flalign*}
\begin{lemma}\label{p_second_moment_empirical_overlap_distributions_characterization_continuous}
The set $\mathcal X$ is a two-dimensional convex polytope with corners $x^{(0)},x^{(1)},x^{(2)}$, and $x^*$ is in the interior of $\mathcal X$. The image of $\mathcal X$ under $W(b_1,b_2)$ is the one-dimensional convex polytope $\mathcal W$ with corners $p^{(0)}_{\mathrm{e}}$ and $p^{(2)}_{\mathrm{e}}$. Further, the pre-image of $p\in\mathcal W$ under $W(b_1,b_2)$ is $\mathcal X_p$, where $\mathcal X_{p^{(s)}_{\mathrm{e}}}=\{p^{(s)}\}$ for $s\in\{0,2\}$ and the intersection of $\mathcal X_p$ with the interior of $\mathcal X$ is non-empty otherwise.
\end{lemma}
\begin{proof}
Notice that $\iota(x^{(s)})=p^{(s)}$ for $s\in\{0,1,2\}$, so since $\mathcal P(\{0,1,2\})$ is the convex hull of its corners $p^{(s)}$, $s\in\{0,1,2\}$, we have that $\mathcal X$ is the convex hull of $x^{(s)}$, $s\in\{0,1,2\}$, i.e.~a two-dimensional convex polytope with corners $x^{(s)}$, since $\iota$ is a bijection and linear up to translation. In particular this also directly yields that $x^*$ is in the interior of $\mathcal X$.
Further, this shows that for any $x\in\mathcal X$ we have $x_1\ge 0$ with equality iff $x=x^{(0)}$ and further $x_1\le\frac{2}{d}$ with equality iff $x=x^{(2)}$.
Using $Wb_2=0_{\{0,1\}^2}$ and $Wb_1=\frac{d}{k}w$ from the proof of Lemma \ref{p_second_moment_empirical_overlap_distributions_characterization} we directly get that
\begin{flalign*}
W(b_1,b_2)x=p^{(0)}_{\mathrm{e}}+\frac{d}{k}x_1w\textrm{ with }\frac{d}{k}x_1\in[0,2/k]\textrm{,}
\end{flalign*}
hence the image of $\mathcal X$ under $W(b_1,b_2)$ is a subset of $\mathcal W$. Conversely, for $y\in[0,2/k]$ and $x=\frac{k}{d}yx^{(2)}\in\mathcal X$ we have $W(b_1,b_2)x=p^{(0)}_{\mathrm{e}}+yw$, which shows that $\mathcal W$ is the image of $\mathcal X$ under $W(b_1,b_2)$. This also shows that $\mathcal X_p$ is the pre-image of $p\in\mathcal W$, since for $y\in[0,1/k]$ and $p=p^{(0)}_{\mathrm{e}}+yw$ we have $p(1,1)=y$. This in turn directly yields that $\mathcal X_{p^{(s)}_{\mathrm{e}}}=\{p^{(s)}\}$ for $s\in\{0,2\}$. To see that $\mathcal X_p$ contains interior points of $\mathcal X$ otherwise, we can consider non-trivial convex combinations of $x^*$ and $x^{(0)}$ for $\frac{k}{d}p(1,1)<x^*_1$ and non-trivial convex combinations of $x^*$ and $x^{(2)}$ for $\frac{k}{d}p(1,1)>x^*_1$, which are points in the interior of $\mathcal X$.
\end{proof}
Notice that in the two-dimensional case at hand, the proof of Lemma \ref{p_second_moment_empirical_overlap_distributions_characterization_continuous} is overly formal. The set $\mathcal X$ is simply (the convex hull of) the triangle given by $x^{(s)}$, $s\in\{0,1,2\}$, with $\mathcal X_p$ given by the vertical lines in $\mathcal X$ with $x_1=\frac{d}{k}p(1,1)$.
Further, the set $\mathcal X_n$ is a canonical discretization of $\mathcal X$ in that it is given by the points of the grid $(m^{-1}\mathbb Z)^2$ contained in the triangle $\mathcal X$.
\subsection{Proof of Lemma \ref{p_second_moment_asymptotics}}\label{p_second_moment_asymptotics_proof}
We derive Lemma \ref{p_second_moment_asymptotics} from the following stronger assertion.
\begin{lemma}\label{p_second_moment_asymptotics_uniform}
Let $\mathcal U\subseteq\mathcal P(\{0,1,2\})$ be a subset with non-empty interior and such that the closure of $\mathcal U$ is contained in the interior of $\mathcal P(\{0,1,2\})$. Then there exists a constant $c=c(\mathcal U)\in\mathbb R_{>0}$ such that for all $n\in\mathcal N$ and all $p\in\mathcal P_n\cap\mathcal U$ we have $\tilde E(p)e^{-c/n}\le E(p)\le\tilde E(p)e^{c/n}$, where
\begin{flalign*}
\tilde E(p)=\sqrt{\frac{d^3}{(2\pi)^2m^2\prod_{s}p(s)}}e^{n\phi_2(p)}\textrm{.}
\end{flalign*}
\end{lemma}
\begin{proof} 
Let $\mathcal C$ denote the closure of $\mathcal U$ and $\pi_s:\mathcal C\rightarrow[0,1]$, $p\mapsto p(s)$ the projection for $s\in\{0,1,2\}$. Since $\mathcal C$ is compact, the continuous map $\pi_s$ attains its maximum $p_+(s)$ and its minimum $p_-(s)$, which directly gives $0<p_-(s)<p_+(s)<1$ since all $p\in\mathcal C$ are fully supported and the interior of $\mathcal C$ is non-empty (that gives the second inequality).
Using Lemma \ref{p_second_moment_empirical_overlap_distributions_characterization_continuous},
the continuous map $\pi:\mathcal C\rightarrow[0,2/k]$, $p\mapsto p_{\mathrm{e}}(1,1)$, and the same reasoning as above we obtain the maximum $p_{\mathrm{e},+}(1,1)$ and minimum $p_{\mathrm{e},-}(1,1)$ of $\pi$ with $0<p_{\mathrm{e},-}(1,1)<p_{\mathrm{e},+}(1,1)<2/k$, which directly give the bounds $p_{\mathrm{e},-}(x)$, $p_{\mathrm{e},+}(x)>0$ for $x\in\{0,1\}^2$ as functions of $p_{\mathrm{e},+}(1,1)$ and $p_{\mathrm{e},-}(1,1)$.
Now, we can use these bounds with the Stirling bound to obtain a constant $c\in\mathbb R_{>0}$ such that for all $n\in\mathcal N$ and $p\in\mathcal P_n\cap\mathcal C$ we have
$E'(p)e^{-c/n}\le E(p')\le E'(p)e^{c/n}$, where
\begin{flalign*}
E'(p)&=\sqrt{\frac{2\pi md^3}{\prod_{s}(2\pi mp'(s))}}\prod_{s\in\{0,1,2\}}\binom{k}{s,2-s,2-s,k-4+s}^{mp'(s)}
e^{mH(p')-(d-1)nH(p'_{\mathrm{e}})}\\
&=\sqrt{\frac{d^3}{(2\pi)^2m^2\prod_{s}p'(s)}}e^{2m\ln\binom{k}{2}-m\DKL{p'}{p^*}-(d-1)nH(p'_{\mathrm{e}})}\textrm{.}
\end{flalign*}
To see that $E'(p)=\tilde E(p)$, we observe that $\DKL{p_{\mathrm{e}}}{p^*_{\mathrm{e}}}=2H(2/k)-H(p_{\mathrm{e}})$,
since $H(p^*_{\mathrm{e}})=2H(2/k)$, $p_{\mathrm{e}}(1,0)=p_{\mathrm{e}}(0,1)$, and $p_{\mathrm{e}}(1,1)+p_{\mathrm{e}}(1,0)=2/k$ for any $p\in\mathcal P(\{0,1,2\})$.
\end{proof}
Now, Lemma \ref{p_second_moment_asymptotics} is an immediate corollary.
To see this, fix a fully supported overlap distribution $p\in\mathcal P(\{0,1,2\})$ and a sequence $(p_n)_{n\in\mathcal N}\subseteq\mathcal P_n$ converging to $p$, e.g.~$p_n=\iota(m^{-1}\lfloor mx_1\rfloor,m^{-1}\lfloor mx_2\rfloor)$ with $\iota(x)=p$ and $n$ sufficiently large. Further, fix a neighbourhood $\mathcal U$ of $p$ as described in Lemma \ref{p_second_moment_asymptotics_uniform}, which is possible since $p$ is fully supported. Then we have $p_n\in\mathcal U$ for sufficiently large $n$, hence with the continuity of $\phi_2$ we have
\begin{flalign*}
\lim_{n\rightarrow\infty}\frac{1}{n}\ln(E(p_n))
=\lim_{n\rightarrow\infty}\frac{1}{n}\ln(\tilde E(p_n))
=\lim_{n\rightarrow\infty}\phi_2(p_n)=\phi_2(p)\textrm{.}
\end{flalign*}
\subsection{Proof of Proposition \ref{p_second_moment_results}}\label{s_second_moment_results}
We postpone the proof of Proposition \ref{p_second_moment_optimization} and continue with Laplace's method for sums using the result. We obtain that
\begin{flalign*}
\frac{\mathbb E[Z^2]}{\mathbb E[Z]^2}=\sum_pe(p)\textrm{, } \quad
e(p)=\frac{E(p)}{\mathbb E[Z]^2}
=\frac{\binom{2n/k}{p_{\mathrm{e}}(1,1)}\binom{(k-2)n/k}{np_{\mathrm{e}}(0,1)}\binom{dn}{2dn/k}\binom{m}{mp}\prod_sp^*(s)^{mp(s)}}
{\binom{2dn/k}{dnp_{\mathrm{e}}(1,1)}\binom{(k-2)dn/k}{dnp_{\mathrm{e}}(0,1)}\binom{n}{2n/k}}\textrm{,}
\end{flalign*}
where the sum is over $p\in\mathcal P_n$.
First, we use Proposition \ref{p_second_moment_optimization} to show that Laplace's method of sums is applicable.
While we have already established that $\Delta_{d^*}$ is non-negative, we still need to ensure that $p^*$ is the unique minimizer of $\Delta_d$ for $d<d^*$ and that the Hessian at $p^*$ is positive definite.
We will need the second order Taylor approximation of the KL divergence.
To be specific, let $\mu^*$ have finite non-trivial support $\mathcal S$ and let $f:\mathcal P(\mathcal S)\rightarrow\mathbb R_{\ge 0}$, $\mu\mapsto\DKL{\mu}{\mu^*}$, be the corresponding KL divergence.
Then 
\begin{flalign*}
f^{(2)}:\mathcal P(\mathcal S)\rightarrow\mathbb R_{\ge 0},
\quad
\mu\mapsto\frac{1}{2}\DP{\mu}{\mu^*}
=\frac12\sum_s\frac{(\mu(s)-\mu^*(s))^2}{\mu^*(s)}=(\mu-\mu^*)^{\mathrm{t}}D_{\mu^*}^{-1}(\mu-\mu^*)\textrm{, }
\end{flalign*}
is the second order Taylor approximation of $f$ at $\mu^*$, where $\DP{\mu}{\mu^*}$
denotes \emph{Pearson's $\chi^2$ divergence}, $D_{\mu^*}=(\delta_{i,j}\mu^*(i))_{i,j\in\mathcal S}$ the matrix with $\mu^*$ on the diagonal, and $\delta_{i,j}=1$ if $i=j$ and $0$ otherwise; this can be easily seen by considering the extension of $f$ to $\mathbb R_{\ge 0}^{\mathcal S}$.
On the other hand, we would like to consider $\Delta_d$ as a function over the suitable domain $\mathcal X$ from Section \ref{p_second_moment_empirical_overlap_distributions}, however relative to the base point $p^*$.
Hence, let $\mathcal X^*=\{x-x^*:x\in\mathcal X\}$ be the triangle $\mathcal X$ centred at $x^*$ instead of $x^{(0)}$, and $\iota^*:\mathcal X^*\rightarrow\mathcal P(\{0,1,2\})$ the bijection given by
\begin{flalign*}
\iota^*(x)=\iota(x+x^*)=p^{(0)}+(b_1,b_2)x+(b_1,b_2)x^*=\iota(x^*)+(b_1,b_2)x=p^*+(b_1,b_2)x
\end{flalign*}
for $x\in\mathcal X^*$, with $b_1,b_2$ from Equation \eqref{eq_basisdef}. Now, let $f_d:\mathcal X^*\rightarrow\mathbb R_{\ge 0}$, $x\mapsto\Delta_d(\iota^*(x))$, denote the corresponding parametrization of  $\Delta_d$.
Then, using the chain rule for multivariate calculus as indicated above for both $(b_1,b_2)$ and $W$ from Equation \eqref{eq_Wdefinition}, we derive the Hessian
\begin{flalign}\label{eq_Hessian}
H_d=(b_1,b_2)^{\mathrm{t}}\left(D_{p^*}^{-1}-\frac{(d-1)k}{d}W^{\mathrm{t}}D_{p^*_{\mathrm{e}}}^{-1}W\right)(b_1,b_2)
\end{flalign}
of $f_d$ at $0_{[2]}\in\mathbb R^2$, using the shorthand $D_{\mu^*}=(\delta_{i,j}\mu^*(i))_{i,j}$.
The properties of the KL divergence imply that $f_d(0_{[2]})=0$ and $f_d$ has a stationary point at $0_{\{0,1,2\}}$.
Now, the second order Taylor approximation $f^{(2)}_d:\mathcal X^*\rightarrow\mathbb R$, $x\mapsto\frac{1}{2}x^{\mathrm{t}}H_dx$, of $f_d$ at $0_{[2]}$ can be written as $f^{(2)}_d=\Delta^{(2)}_d\circ\iota^*$ with
\begin{flalign}\label{eq_delta2normalized}
\Delta^{(2)}_d(p)&=\frac{1}{2}\left[\DP{p}{p^*}-\frac{(d-1)k}{d}\DP{p_{\mathrm{e}}}{p^*_{\mathrm{e}}}\right]
\end{flalign}
and further for any neighbourhood $\mathcal U$ of $0_{[2]}$ such that the closure of $\mathcal U$ is contained in the interior of $\mathcal X^*$ Taylor's theorem yields a constant $c\in\mathbb R_{>0}$ such that
\begin{flalign*}
f^{(2)}_d(x)-c\|x\|_2^3\le f_d(x)\le f^{(2)}_d(x)+c\|x\|_2^3
\end{flalign*}
for all $x\in\mathcal U$.
Since $H_d$ is symmetric, let $\lambda_1$, $\lambda_2\in\mathbb R$ with $\lambda_1\le\lambda_2$ denote its eigenvalues and fix a corresponding orthonormal basis of eigenvectors $v_1$, $v_2\in\mathbb R^2$, i.e.~$H_dv_1=\lambda_1v_1$ and $H_dv_2=\lambda_2v_2$.

Formally and analogously to the KL divergence we will take the liberty to identify $\Delta_d$ and $\Delta^{(2)}_d$ with their extensions to the maximal domains $\mathcal D\subseteq\mathbb R^{\{0,1,2\}}$ and $\mathcal D^{(2)}=\mathbb R^{\{0,1,2\}}$ respectively. In particular, Lemma \ref{p_second_moment_empirical_overlap_distributions_characterization_continuous} shows that for any fully supported $p\in\mathcal P(\{0,1,2\})$ the edge distribution $p_{\mathrm{e}}$ also has full support, hence we can use the Lipschitz continuity of $W$ on $\mathbb R^{\{0,1,2\}}$ to find $\varepsilon\in\mathbb R_{>0}$ such that both $x>0$ and $Wx>0$ for any $x\in\mathcal B_\varepsilon(p)\subseteq\mathbb R_{>0}^{\{0,1,2\}}$ and thereby $\Delta_d$ is well-defined and smooth on $\mathcal B_\varepsilon(p)$.
\begin{lemma}\label{p_second_moment_laplace_conditions}
Let $k\in\mathbb Z_{\ge 4}$ and $d\in(0,d^*)$. Then the unique minimizer of $f_d$ is $0_{[2]}$ and $H_d$ is positive definite.
\end{lemma}
\begin{proof}
Using Proposition \ref{p_second_moment_optimization} we know that $H_{d^*}$ is positive semidefinite since $0_{[2]}$ is a global minimum of $f_{d^*}$. This in turn yields that $f^{(2)}_{d^*}\ge 0$ or equivalently $\Delta^{(2)}_{d^*}\ge 0$.
Now, for any $d<d^*$ the unique minimizer of $\Delta_d$ is $p^*$ since $\Delta_d(p^*)=0$, further $\Delta_d(p)>0$ for any $p\neq p^*$ with $p_{\mathrm{e}}=p^*_{\mathrm{e}}$ and
\begin{flalign*}
\Delta_d(p)=\DKL{p}{p^*}-\left(1-\frac{1}{d}\right)k\DKL{p_{\mathrm{e}}}{p^*_{\mathrm{e}}}
>\Delta_{d^*}(p)\ge 0
\end{flalign*}
for any $p$ with $p_{\mathrm{e}}\neq p^*_{\mathrm{e}}$.
But the same argumentation shows that $p^*$ is the unique minimizer of $\Delta^{(2)}_d$, since $\DP{\mu}{\mu^*}$ is also minimal with value $0$ iff $\mu=\mu^*$. This in turn shows that $f^{(2)}_d$ is uniquely minimized at $0_{[2]}$ and hence $H_d$ is positive definite.
\end{proof}
Let $\eta_{\mathrm{KL}}=\sup_{p\neq p^*}\frac{\DKL{p_{\mathrm{e}}}{p^*_{\mathrm{e}}}}{\DKL{p}{p^*}}$ denote the contraction coefficient with respect to the KL divergence.
Notice that by Proposition \ref{p_second_moment_optimization} we have $\frac{d^*}{(d^*-1)k}\ge\frac{\DKL{p_{\mathrm{e}}}{p^*_{\mathrm{e}}}}{\DKL{p}{p^*}}$ for all $p\neq p^*$ with equality for $p=p^{(2)}$, hence $\eta_{\mathrm{KL}}=\frac{d^*}{(d^*-1)k}$ (so Proposition \ref{p_second_moment_optimization} indeed confirms the conjecture by the authors in \cite{panagiotou2019}).
Further, let $\eta_{\chi^2}=\sup_{p\neq p^*}\frac{\DP{p_{\mathrm{e}}}{p^*_{\mathrm{e}}}}{\DP{p}{p^*}}$ denote the contraction coefficient with respect to the $\chi^2$ divergence.
The proof of Lemma \ref{p_second_moment_laplace_conditions} suggests that $\eta_{\chi^2}\le\eta_{\mathrm{KL}}$, a result known from literature.

In the rest of this section we discuss the straightforward (but cumbersome) application of Laplace's method for sums.
For convenience, we first show that the boundaries can be neglected and derive the asymptotics of the sum on the interior using the uniform convergence established in Lemma \ref{p_second_moment_asymptotics_uniform}.
\begin{lemma}\label{m_laplace_negligible}
Let $\mathcal U$ be a neighbourhood of $p^*$ such that its closure is contained in the interior of $\mathcal P(\{0,1,2\})$. Then 
\begin{flalign*}
\frac{\mathbb E[Z^2]}{\mathbb E[Z]^2}=\sum_pe(p)\sim\sum_{p\in\mathcal U}\sqrt{\frac{d}{(2\pi)^2m^2\prod_sp(s)}}e^{-m\Delta_d(p)}\textrm{.}
\end{flalign*}
\end{lemma}
\begin{proof} 
Let $\Delta_{\min}>0$ denote the global minimum of $\Delta$ on $\mathcal P(\{0,1,2\})\setminus\mathcal U$.
Now, we can use the well-known bounds
$\frac{1}{a+1}\exp(aH(\frac{b}{a}))\le\binom{a}{b}\le\exp(aH(\frac{b}{a}))$ for binomial coefficients and the corresponding upper bound for multinomial coefficients (using the entropy of the distribution determined by the weights $\frac{b_i}{a}$) to derive
\begin{flalign*}
\sum_{p\not\in\mathcal U}e(p)
&\le\rho(n)\sum_{p\not\in\mathcal U}e^{-m\Delta_d(p)}
\le \rho(n)e^{-m\Delta_{\min}}|\mathcal P_m(\{0,1,2\})|
\le \rho(n)m^3e^{-m\Delta_{\min}}\textrm{, where}\\
\rho(n)&=(n+1)\left(\frac{2dn}{k}+1\right)\left(\frac{(k-2)dn}{k}+1\right)\textrm{.}
\end{flalign*}
Here, we used the form of $e(p)$ introduced at the beginning of this section and further notice that the bounds used are tight for the log-densities, i.e.~the exponent is $\Delta_d(p)$ by the computations in Section \ref{p_second_moment_asymptotics_proof}.
The right hand side vanishes for $n$ tending to infinity, hence we have
\begin{flalign*}
\frac{\mathbb E[Z^2]}{\mathbb E[Z]^2}\sim\sum_{p\in\mathcal U}e(p)\textrm{.}
\end{flalign*}
Now, the result directly follows using Lemma \ref{p_second_moment_asymptotics_uniform} and Lemma \ref{p_first_moment_results}.
\end{proof}
Lemma \ref{m_laplace_negligible} shows that the overlap distributions $p$ with material contributions $e(p)$ to the second moment are concentrated around $p^*$.
Hence, instead of considering a fixed neighbourhood $\mathcal U$ of $p^*$ we consider a sequence $(\mathcal U_n)_{n\in\mathcal N}$ of decreasing neighbourhoods.
First, we choose a scaling that improves the assertion of Lemma \ref{m_laplace_negligible} and further allows to simplify the asymptotics of the right hand side, in the sense that the leading factor collapses to a constant and $f_d(x)=\Delta_d(\iota^*(x))$ 
can be replaced by its second order Taylor approximation $f^{(2)}_d(x)=\Delta^{(2)}_d(\iota^*(x))=\frac{1}{2}x^{\mathrm{t}}H_dx$ from above \eqref{eq_delta2normalized}.
For this purpose let $\mathcal U^*\subseteq\mathcal X^*$ be a sufficiently small neighbourhood of $0_{[2]}$ (in particular bounded away from the boundary of $\mathcal X^*$) and
\begin{flalign*}
\mathcal U^*_n=\left\{x\in\mathcal X^*:\|x\|_2<\frac{\ln(m)}{\sqrt{m}}\right\}
\textrm{ for }n\in\mathcal N\textrm{.}
\end{flalign*}
In the following we restrict to $n\ge n_0$ where $n_0\in\mathcal N$ is such that $\mathcal U^*_{n_0}\subseteq\mathcal U^*$.
\begin{lemma}\label{m_laplace_asymptotics_riemann}
We have
\begin{flalign*}
\frac{\mathbb E[Z^2]}{\mathbb E[Z]^2}
\sim\sqrt{\frac{d}{(2\pi)^2m^2\prod_sp^*(s)}}\sum_{x\in\mathcal U^*_n}e^{-\frac{m}{2}x^{\mathrm{t}}H_dx}\textrm{.}
\end{flalign*}
\end{lemma}
\begin{proof} 
First, notice that we can apply Lemma \ref{m_laplace_negligible} to $\iota^*(\mathcal U^*)$.
So, we need to show that the sum over $\mathcal U^*\setminus\mathcal U^*_n$ is negligible.
Then we proceed to derive the asymptotics of the sum over $\mathcal U^*_n$.
Obviously, we have $f_{\min}(n)\rightarrow 0$ for $n\rightarrow\infty$ with $f_{\min}(n)=\min_{x\not\in\mathcal U^*_n}f_d(x)>0$, since $f_d(x)=\Delta_d(\iota^*(x))$ is continuous and $f_d(0_{[2]})=0$.
The main objective of the proof is to show that $f_{\min}(n)$ converges to zero sufficiently slow.
But with $f^{(2)}_d(x)=\frac{1}{2}x^{\mathrm{t}}H_dx$ from above \eqref{eq_delta2normalized} and for any $x\in\mathbb R^2$ we have
\begin{flalign*}
f^{(2)}_d((v_1,v_2)x)&=\frac{1}{2}(\lambda_1x_1^2+\lambda_2x_2^2)
\ge\frac{\lambda_1}{2}\|x\|_2^2=\frac{\lambda_1}{2}\|(v_1,v_2)x\|_2^2
\end{flalign*}
since $(v_1,v_2)$ is an orthonormal basis, so $f^{(2)}_d(x)\ge\frac{\lambda_1}{2}\|x\|_2^2$ for all $x\in\mathbb R^2$.
Now, for any sufficiently small $\varepsilon\in(0,1)$ let $c\in\mathbb R_{>0}$ be the constant for $\mathcal B_{\varepsilon}(0_{[2]})$ from Taylor's theorem applied to $f_d$ at $0_{[2]}$,
then for any $x\in\mathcal U^*=\mathcal B_{\varepsilon}(0_{[2]})\cap\mathcal B_\delta(0_{[2]})$, with $\delta=\frac{\varepsilon\lambda_1}{2c}$, we have $f_d(x)\ge(1-\varepsilon)f^{(2)}_d(x)$ since
\begin{flalign*}
f_d(x)-(1-\varepsilon)f^{(2)}_d(x)\ge\varepsilon f^{(2)}_d(x)-c\|x\|_2^3
\ge\left(\frac{\varepsilon\lambda_1}{2}-c\delta\right)\|x\|_2^2=0\textrm{.}
\end{flalign*}
In combination we have $f_d(x)\ge\frac{(1-\varepsilon)\lambda_1}{2}\|x\|_2^2$ and hence
\begin{flalign*}
\sum_{x\not\in\mathcal U^*_n}e(\iota^*(x))&\sim\sum_{x\in\mathcal U^*\setminus\mathcal U^*_n}\sqrt{\frac{d}{(2\pi)^2m^2\prod_sp(s)}}e^{-mf_d(x)}
\le m^3e^{-\frac{(1-\varepsilon)\lambda_1}{2}\ln(m)^2}\sim 0\textrm{,}
\end{flalign*}
by using Lemma \ref{m_laplace_negligible}, bounding $x\in\mathcal U^*$ uniformly for the leading factor, the summation region with $m^3$ and an additional $m$ to compensate the constants for large $n$.
With this we have
\begin{flalign*}
\frac{\mathbb E[Z^2]}{\mathbb E[Z]^2}
\sim\sum_{x\in\mathcal U^*_n}e(\iota^*(x))
\sim\sqrt{\frac{d}{(2\pi)^2m^2\prod_sp^*(s)}}\sum_{x\in\mathcal U^*_n}e^{-mf^{(2)}_d(x)}\textrm{,}
\end{flalign*}
where the last inequality follows from the fact that the leading factor converges to the respective constant uniformly on $\mathcal U^*_n$ and by Taylor's theorem for $f_d$ on $\mathcal U^*_n$ as seen before.
\end{proof}
Lemma \ref{m_laplace_asymptotics_riemann} completes the analytical part of the proof.
For the last, measure theoretic, part we recall the bijection $\iota_n$ from Lemma \ref{p_second_moment_empirical_overlap_distributions_characterization}.
The translation of the sum on the right-hand side of Lemma \ref{m_laplace_asymptotics_riemann} into a Riemann sum and further into the integral $\int g_\infty(x)\mathrm{d}x$, where
\begin{flalign*}
g_{\infty}:\mathbb R^2\rightarrow\mathbb R_{>0},
\quad
y \mapsto \sqrt{\frac{d}{(2\pi)^2\prod_sp^*(s)}}\exp\left(-\frac{1}{2}y^{\mathrm{t}}H_dy\right)\textrm{, }
\end{flalign*}
is essentially given by the grid $\mathcal X_n\subseteq(m^{-1}\mathbb Z)^2\subseteq\mathbb R^2$. We make this rigorous in the following.
\begin{lemma}\label{m_laplace_sum_limit}
We have
\begin{flalign*}
\frac{\mathbb E[Z^2]}{\mathbb E[Z]^2}
\sim\int g_\infty(x)\mathrm{d}x\textrm{.}
\end{flalign*}
\end{lemma}
\begin{proof}
We start with the partition of $\mathbb R^2$ into the squares
\begin{flalign*}
\mathcal Q_{n,x}=\left\{x+\alpha_1\begin{pmatrix}1\\ 0\end{pmatrix}+\alpha_2\begin{pmatrix}0\\ 1\end{pmatrix}:\alpha\in\left[-\frac{1}{2m},\frac{1}{2m}\right)^2\right\}\textrm{, }x\in(m^{-1}\mathbb Z)^2\textrm{.}
\end{flalign*}
Next, we need a suitable selection of squares to cover the disc
\begin{flalign*}
x^*+\mathcal U^*_n=\left\{x^*+x:x\in\mathbb R^2,\|x\|_2<\frac{\ln(m)}{\sqrt{m}}\right\}\subseteq\mathbb R^2
\end{flalign*}
corresponding to the disc $\mathcal U^*_n$.
For this purpose let $x_{\min}$, $x_{\max}\in(m^{-1}\mathbb Z)^2$ be given by
\begin{flalign*}
x_{\min,1}&=m^{-1}\left\lfloor m\left(x^*_1-\frac{\ln(m)}{\sqrt{m}}\right)\right\rfloor\textrm{, }
x_{\min,2}=m^{-1}\left\lfloor m\left(x^*_2-\frac{\ln(m)}{\sqrt{m}}\right)\right\rfloor\textrm{, }\\
x_{\max,1}&=m^{-1}\left\lceil m\left(x^*_1+\frac{\ln(m)}{\sqrt{m}}\right)\right\rceil\textrm{, }
x_{\max,2}=m^{-1}\left\lceil m\left(x^*_2+\frac{\ln(m)}{\sqrt{m}}\right)\right\rceil\textrm{.}
\end{flalign*}
Further, let $\mathcal G_n=(m^{-1}\mathbb Z)^2\cap\left([x_{\min,1},x_{\max,1}]\times[x_{\min,2},x_{\max,2}]\right)$.
By the definition of $x_{\min}$ and $x_{\max}$ the points on the boundary are not in $x^*+\mathcal U^*_n$, which ensures that $x^*+\mathcal U^*_n\subseteq\mathcal Q_n$ with $\mathcal Q_n=\bigcup_{x\in\mathcal G_n}\mathcal Q_{n,x}$. Further, we have $\mathcal Q_-\subseteq\mathcal Q_n\subseteq\mathcal Q_+$ with
\begin{flalign*}
\mathcal Q_-=\left\{x\in\mathbb R^2:\|x-x^*\|_{\infty}\le\frac{\ln(m)}{\sqrt{m}}\right\}
\textrm{, }
\mathcal Q_+=\left\{x\in\mathbb R^2:\|x-x^*\|_{\infty}\le\frac{\ln(m)}{\sqrt{m}}+\frac{3}{2m}\right\}\textrm{,}
\end{flalign*}
which ensures that $\mathcal Q_n\subseteq\mathcal X$ for $n\in\mathcal N$ sufficiently large.
Now, we translate the notions back to $\mathcal X^*$ using the bijection $\tau:\mathbb R^2\rightarrow\mathbb R^2$, $x\mapsto x-x^*$, i.e.~let
$\mathcal G^*_n=\tau(\mathcal G_n)$,
$\mathcal Q^*_{n,x}=\tau(\mathcal Q_{n,\tau^{-1}(x)})$ for $x\in\mathcal G^*_n$, $\mathcal Q^*_n=\tau(\mathcal Q_n)$, $\mathcal Q^*_-=\tau(\mathcal Q_-)$ and $\mathcal Q^*_+=\tau(\mathcal Q_+)$.
This directly gives
\begin{flalign*}
\mathcal Q^*_{n,x}&=\left\{x+\alpha_1\begin{pmatrix}1\\ 0\end{pmatrix}+\alpha_2\begin{pmatrix}0\\ 1\end{pmatrix}:\alpha\in\left[-\frac{1}{2m},\frac{1}{2m}\right)^2\right\}\textrm{, }x\in\mathcal G^*_n\textrm{, }
\mathcal Q^*_n=\bigcup_{x\in\mathcal G^*_n}\mathcal Q^*_{n,x}\textrm{,}\\
\mathcal Q^*_-&=\left\{x\in\mathbb R^2:\|x\|_\infty\le\frac{\ln(m)}{\sqrt{m}}\right\}\textrm{, }
\mathcal Q^*_+=\left\{x\in\mathbb R^2:\|x\|_\infty\le\frac{\ln(m)}{\sqrt{m}}+\frac{3}{2m}\right\}\textrm{, }
\end{flalign*}
and $\mathcal U^*_n\subseteq\mathcal Q^*_-\subseteq\mathcal Q^*_n\subseteq\mathcal Q^*_+\subseteq\mathcal X^*$ for $n\in\mathcal N$ sufficiently large.
Further, with Lemma \ref{m_laplace_asymptotics_riemann} and the definition of $f_d^{(2)}$ we now have
\begin{flalign*}
\frac{\mathbb E[Z^2]}{\mathbb E[Z]^2}\sim\sum_{x\in\mathcal G^*_n}\sqrt{\frac{d}{(2\pi)^2m^2\prod_sp^*(s)}}\exp\left(-\frac{1}{2}mx^{\mathrm{t}}H_dx\right)
\end{flalign*}
Finally, we need to adjust the scaling to turn the sum on the right hand side into a Riemann sum.
For this purpose let $\sigma:\mathbb R^2\rightarrow\mathbb R^2$, $x\mapsto\sqrt{m}x$, further
$\mathcal G'_n=\sigma(\mathcal G^*_n)$, $\mathcal Q'_{n,x}=\sigma(\mathcal Q^*_{n,\sigma^{-1}(x)})$ for $x\in\mathcal G'_n$, 
$\mathcal Q'_n=\sigma(\mathcal Q^*_n)$, $\mathcal Q'_-=\sigma(\mathcal Q^*_-)$ and $\mathcal Q'_+=\sigma(\mathcal Q^*_+)$.
This directly gives
\begin{flalign*}
\mathcal Q'_{n,x}&=\left\{x+\alpha_1\begin{pmatrix}1\\ 0\end{pmatrix}+\alpha_2\begin{pmatrix}0\\ 1\end{pmatrix}:\alpha\in\left[-\frac{1}{2\sqrt{m}},\frac{1}{2\sqrt{m}}\right)^2\right\}\textrm{, }x\in\mathcal G'_n\textrm{, }
\mathcal Q'_n=\bigcup_{x\in\mathcal G'_n}\mathcal Q'_{n,x}\textrm{,}\\
\mathcal Q'_-&=\left\{x\in\mathbb R^2:\|x\|_\infty\le\ln(m)\right\}\textrm{, }
\mathcal Q'_+=\left\{x\in\mathbb R^2:\|x\|_\infty\le\ln(m)+\frac{3}{2\sqrt{m}}\right\}\textrm{, }
\end{flalign*}
and $\mathcal Q'_-\subseteq\mathcal Q'_n\subseteq\mathcal Q'_+$.
Using that $mx^{\mathrm{t}}H_dx=\sigma(x)^{\mathrm{t}}H_d\sigma(x)$ for all $x\in\mathcal G^*_n$ and further that the area of $\mathcal Q'_{n,x}$ is $m^{-1}$ for all $x\in\mathcal G'_n$ we have
\begin{flalign*}
\frac{\mathbb E[Z^2]}{\mathbb E[Z]^2}&\sim\sum_{x\in\mathcal G'_n}\sqrt{\frac{d}{(2\pi)^2m^2\prod_sp^*(s)}}\exp\left(-\frac{1}{2}x^{\mathrm{t}}H_dx\right)=\int g_n(y)\mathrm{d}y\textrm{,}\\
g_n(y)&=\sum_{x\in\mathcal G'_n}\mathbbm 1\{y\in\mathcal Q'_{n,x}\}\sqrt{\frac{d}{(2\pi)^2\prod_sp^*(s)}}\exp\left(-\frac{1}{2}x^{\mathrm{t}}H_dx\right)\textrm{, }y\in\mathbb R^2\textrm{.}
\end{flalign*}
In order to show that $\int g_n(y)\mathrm{d}y$ converges to $\int g_{\infty}(y)\mathrm{d}y$
we recall from Lemma \ref{p_second_moment_laplace_conditions} that $H_d$ is positive definite, which ensures that $\int g_{\infty}(y)\mathrm{d}y$ exists and is finite.
Now, using Taylor's theorem with order $0$ and the Lagrange form of the first order remainder with the fact that the absolutes of the first derivatives of $g_{\infty}$ are bounded from above yields a constant $c\in\mathbb R_{>0}$ such that for all $n\in\mathcal N$ and all $y\in\mathcal Q'_n$, with $x\in\mathcal G'_n$ such that $y\in\mathcal Q'_{n,x}$, we have
\begin{flalign*}
\|g_{\infty}(y)-g_n(y)\|_{\infty}=\|g_{\infty}(y)-g_{\infty}(x)\|_{\infty}\le\frac{c}{2\sqrt{m}}\textrm{.}
\end{flalign*}
This bound directly suggests that
\begin{flalign*}
&\left|\int \mathbbm 1\{y\in\mathcal Q'_{n,x}\}g_{\infty}(y)\mathrm{d}y-\int \mathbbm 1\{y\in\mathcal Q'_{n,x}\}g_{n}(y)\mathrm{d}y\right|\le cm^{-\frac{3}{2}}\textrm{, }\\
&\left|\int \mathbbm 1\{y\in\mathcal Q'_{n}\}g_{\infty}(y)\mathrm{d}y-\int \mathbbm 1\{y\in\mathcal Q'_{n}\}g_{n}(y)\mathrm{d}y\right|\le c\left(\frac{\ln(m)}{\sqrt{m}}+\frac{3}{2m}\right)\textrm{ and }\\
&\left|\int g_{\infty}(y)\mathrm{d}y-\int g_{n}(y)\mathrm{d}y\right|\le c\left(\frac{\ln(m)}{\sqrt{m}}+\frac{3}{2m}\right)+\int \mathbbm 1\{y\not\in\mathcal Q'_{n}\}g_{\infty}(y)\mathrm{d}y\textrm{.}
\end{flalign*}
In particular the last bound suggests that $\int g_n(y)\mathrm{d}y\rightarrow\int g_{\infty}(y)\mathrm{d}y$ since the error on the right-hand side tends to zero as $n$ tends to infinity.

\end{proof}
The only remaining part of the proof is to compute $\int g_\infty(x)\mathrm{d}x$.
\begin{lemma}\label{m_laplace_integral}
We have
$\int g_\infty(x)\mathrm{d}x=\sqrt{\frac{k-1}{k-d}}$.
\end{lemma}
\begin{proof}
The Gaussian integral gives
\begin{flalign*}
\int g_\infty(x)\mathrm{d}x
=\sqrt{\frac{d}{(2\pi)^2\prod_sp^*(s)}}\sqrt{\frac{(2\pi)^2}{\det(H_d)}}
=\sqrt{\frac{d}{\det(H_d)\prod_sp^*(s)}}\textrm{.}
\end{flalign*}
Recall that the Hessian $H_d\in\mathbb R^{2\times 2}$ from \eqref{eq_Hessian} is a $2\times 2$ matrix and all entries are given explicitly, so a straightforward calculation asserts that $\det(H_d)=(k-d)d/[(k-1)\prod_sp^*(s)]$.
\end{proof}
Finally, combining Lemma \ref{m_laplace_sum_limit} with Lemma \ref{m_laplace_integral} completes the proof of Proposition \ref{p_second_moment_results}.
%
%
%
%
\subsection{Proof of Proposition \ref{p_second_moment_optimization}}\label{s_second_moment_optimization}

We start with a characterization of the stationary points of $\Delta_d$ for any $d\in\mathbb R_{>0}$.
In order to determine these, we first determine the stationary points of the restriction of $\Delta_d$ to overlap distributions with the same fixed edge distribution.
For this purpose, recall the line $\mathcal W\subseteq\mathcal P(\{0,1\}^2)$ of attainable edge distributions and
the lines $\mathcal P_q=\iota(\mathcal X_q)=\{p\in\mathcal P(\{0,1,2\}):p_{\mathrm{e}}=q\}$ of overlap distributions with fixed edge distribution $q\in\mathcal W$ from Lemma \ref{p_second_moment_empirical_overlap_distributions_characterization_continuous}.
Further, let $\Delta_{d,q}:\mathcal P_q\rightarrow\mathbb R$ denote the restriction of $\Delta_d$ to $\mathcal P_q$.
For $x\in\mathbb R_{>0}$ let $p_x\in\mathcal P(\{0,1,2\})$ be given by $p_x(s)\propto p^*(s)x^s$, $s\in\{0,1,2\}$, further let
$p_{0}=p^{(0)}$, $p_{\infty}=p^{(2)}$, and $\mathcal P_{\min}=\{p_x:x\in[0,\infty]\}$.
Finally, let $\iota_{\mathrm{rp}}:[0,\infty]\rightarrow\mathcal P_{\min}$, $x\mapsto p_x$, denote the induced map and $\iota_{\mathrm{pe}}:\mathcal P_{\min}\rightarrow\mathcal W$, $p\mapsto p_{\mathrm{e}}$, the corresponding edge distributions.
\begin{lemma}\label{m_optimization_minimizers_local}
For all $q\in\mathcal W\setminus\{p^{(0)}_{\mathrm{e}},p^{(2)}_{\mathrm{e}}\}$ the map $\Delta_{d,q}$ has a unique stationary point $p_q\in\mathcal P_q$ that is a global minimum. The unique global minimizer of $\Delta_{d,p^{(s)}_{\mathrm{e}}}$ is $p_{p^{(s)}_{\mathrm{e}}}=p^{(s)}$ for $s\in\{0,2\}$.
Further, we have $\mathcal P_{\min}=\{p_q:q\in\mathcal W\}$ and the maps $\iota_{\mathrm{rp}}$, $\iota_{\mathrm{pe}}$ are bijections.
\end{lemma}
\begin{proof}
Recall from Lemma \ref{p_second_moment_empirical_overlap_distributions_characterization_continuous} that $\mathcal P_q$ is one-dimensional for $q\in\mathcal W\setminus\{p^{(0)}_{\mathrm{e}},p^{(2)}_{\mathrm{e}}\}$.
Further, the map $\Delta_{d,q}$ is strictly convex since the KL divergence $\DKL{p}{p^*}$ (respectively $x\ln(x)$) is and further $\DKL{p_{\mathrm{e}}}{p^*_{\mathrm{e}}}=\DKL{q}{p^*_{\mathrm{e}}}$ is constant.
Now, fix an interior point $p_\circ\in\mathcal P_q$ and let a boundary point $p_{\mathrm{b}}\in\mathcal P_q$ be given. Then $p_{\mathrm{b}}$ is not fully supported since it is on the boundary of $\mathcal P(\{0,1,2\}$ and hence the derivative of $(\DKL{\alpha p_{\circ}+(1-\alpha)p_{\mathrm{b}}}{p^*})'$ tends to $-\infty$ as $\alpha$ tends to $0$, which shows that $\Delta_{d,q}$ is not minimized on the boundary. Hence, we know that there exists exactly one stationary point $p_q\in\mathcal P_q$ and that $\Delta_{d,q}(p)$ is minimial iff $p=p_q$.
As discussed in Lemma \ref{p_second_moment_empirical_overlap_distributions_characterization_continuous} we have $\mathcal P_q=\{p^{(s)}\}$ for $q=p^{(s)}_{\mathrm{e}}$ and $s\in\{0,2\}$, so $p_q=p^{(s)}$ is obviously the unique global minimizer of $\Delta_{d,q}$ in this case and further $\Delta_{d,q}$ has no stationary points (since $\mathcal P_q$ has empty interior). This shows that the map $q\mapsto p_q$ for $q\in\mathcal W$ is a bijection.

Further, for $q$ in the interior of $\mathcal W$ the stationary point $p_q$ is fully supported and the unique root of the first derivative of $\Delta_{d,q}$ in the direction $b_2$, i.e.
\begin{flalign*}
\ln\left(\frac{p_q(0)}{p^*(0)}\right)+\ln\left(\frac{p_q(2)}{p^*(2)}\right)=2\ln\left(\frac{p_q(1)}{p^*(1)}\right)\textrm{ or equivalently }
\frac{p_q(2)/p^*(2)}{p_q(1)/p^*(1)}=\frac{p_q(1)/p^*(1)}{p_q(0)/p^*(0)}\textrm{.}
\end{flalign*}
Let $\mathcal P'_{\min}$ denote the set of all fully supported $p\in\mathcal P(\{0,1,2\})$ satisfying
$\frac{p(2)/p^*(2)}{p(1)/p^*(1)}=\frac{p(1)/p^*(1)}{p(0)/p^*(0)}$, i.e.~our set of candidates for stationary points.
Now, for $p\in\mathcal P'_{\min}$ let $q=p_{\mathrm{e}}$, then we obviously have $p\in\mathcal P_q$ and $p$ is a root of the first derivative of $\Delta_{d,q}$ in the direction $b_2$, so $p$ is the unique root and $p=p_q$. Hence, the map $\iota_{\mathrm{pe}}':\mathcal P'_{\min}\rightarrow\mathcal W$, $p\mapsto p_{\mathrm{e}}$, is a bijection (up to the corners of $\mathcal W$) with inverse $q\mapsto p_q$.
Now, let $\iota_{\mathrm{pr}}:\mathcal P'_{\min}\rightarrow\mathbb R_{>0}$, $p\mapsto x_p$, with $x_p=\frac{p(1)p^*(0)}{p^*(1)p(0)}$.
Notice that $\iota_{\mathrm{pr}}$ is surjective since for any $x\in\mathbb R_{>0}$ we have
\begin{flalign*}
\frac{p_x(2)/p^*(2)}{p_x(1)/p^*(1)}=\frac{x^2}{x}=x=\frac{p_x(1)/p^*(1)}{p_x(0)/p^*(0)}
\end{flalign*}
and hence $p_x\in\mathcal P'_{\min}$. To show that $\iota_{\mathrm{pr}}$ is injective let $p\in\mathcal P'_{\min}$ and $x=x_p$. Using the definition of $x_p$ and the defining property of $\mathcal P'_{\min}$ we get
\begin{flalign*}
p(0)&=p^*(0)\frac{p(0)}{p^*(0)}\textrm{, }
p(1)=p^*(1)x\frac{p(0)}{p^*(0)}\textrm{, }
p(2)=p^*(2)x\frac{p(1)}{p^*(1)}=p^*(2)x^2\frac{p(0)}{p^*(0)}\textrm{, so}\\
p(s)&=\frac{p(s)}{p(0)+p(1)+p(2)}=\frac{p^*(s)x^s}{\sum_sp^*(s)x^s}=p_x(s)\textrm{, }s\in\{0,1,2\}\textrm{.}
\end{flalign*}
This shows that $\mathcal P_{\min}=\mathcal P'_{\min}\cup\{p_0,p_\infty\}$, that $\iota_{\mathrm{rp}}$ is a bijection with inverse $\iota_{\mathrm{pr}}$ (canonically extended to the endpoints), and finally that $\iota_{\mathrm{pe}}=\iota'_{\mathrm{pe}}$ is a bijection as well.
\end{proof}
Lemma \ref{m_optimization_minimizers_local} has a few immediate consequences.
For one, the only minimizers of $\Delta_d$ in the direction of $b_2$ on the boundary are $p^{(0)}$ and $p^{(2)}$, while all other boundary points are maximizers in the direction of $b_2$, hence if $p$ is a global minimizer of $\Delta_d$ on the boundary, then we have $p\in\{p^{(0)},p^{(2)}\}$.
Further, all stationary points of $\Delta_d$ are either local minima or saddle points.
Finally, we have $p\in\mathcal P_{\min}$ for any stationary point $p\in\mathcal P(\{0,1,2\})$ of $\Delta_d$ since then also the derivative in the direction of $b_2$ vanishes.

For the upcoming characterization of the stationary points of $\Delta_d$ let
\begin{flalign*}
\iota_{\mathrm{rr}}:\mathbb R_{>0}\rightarrow\mathbb R_{>0},
\quad
\iota_{\mathrm{rr}}(x)=\iota^*_{\mathrm{rr}}(x)^{\frac{d-1}{d}}\textrm{, }
\iota^*_{\mathrm{rr}}(x)=\frac{p_{x,\mathrm{e}}(1,1)p_{x,\mathrm{e}}(0,0)}{p_{x,\mathrm{e}}(1,0)p_{x,\mathrm{e}}(0,1)} \textrm{.}
\end{flalign*}
Notice that $\iota_{\mathrm{rr}}(x)\in\mathbb R_{>0}$ for $x\in\mathbb R_{>0}$ since then $p_x$ is fully supported and hence $p_{x,\mathrm{e}}$ is fully supported by Lemma \ref{p_second_moment_empirical_overlap_distributions_characterization_continuous}.
Finally, let $\mathcal X_{\mathrm{st}}=\{x\in\mathbb R_{>0}:\iota_{\mathrm{rr}}(x)=x\}$ denote the fixed points of $\iota_{\mathrm{rr}}$ and $\mathcal P_{\mathrm{st}}=\{p_x:x\in\mathcal X_{\mathrm{st}}\}$ the corresponding distributions.
Notice that $p_x=p^*$ for $x=1$ and further $\iota^*(1)=1$, i.e.~$\iota_{\mathrm{rr}}(1)=1$ for all $d\in\mathbb R_{>0}$, hence $1\in\mathcal X_{\mathrm{st}}$ and $p^*\in\mathcal P_{\mathrm{st}}$ for all $d\in\mathbb R_{>0}$.
\begin{lemma}\label{m_optimization_stationary}
The stationary points of $\Delta_d$ are given by $\mathcal P_{\mathrm{st}}$.
\end{lemma}
\begin{proof}
Using Lemma \ref{m_optimization_minimizers_local}, a fully supported distribution $p\in\mathcal P(\{0,1,2\})$ is a stationary point of $\Delta_d$ iff there exists $x\in\mathbb R_{>0}$ such that $p=p_x$ and the derivative of $\Delta_d$ at $p_x$ in the direction $b_1$ vanishes, i.e.~$p_x$ is a solution of
\begin{flalign*}
0=\left(\left(\ln\left(\frac{p_x(s)}{p^*(s)}\right)\right)_{s\in\{0,1,2\}}^{\mathrm{t}}-\frac{(d-1)k}{d}\left(\ln\left(\frac{p_{x,\mathrm{e}}(y)}{p^*_{\mathrm{e}}(y)}\right)\right)_{y\in\{0,1\}^2}^{\mathrm{t}}W\right)b_1\textrm{,}
\end{flalign*}
where we used the chain rule for multivariate calculus, that $W$ is column stochastic and that $b_1\in 1_{\{0,1,2\}}^{\perp}$.
Recall from Section \ref{p_second_moment_empirical_overlap_distributions}, e.g.~from the proof of Lemma \ref{p_second_moment_empirical_overlap_distributions_characterization}, that $Wb_1=\frac{d}{k}w$, hence computing the dot product with $b_1$ gives
\begin{flalign*}
0=d\ln(x)-(d-1)\ln\left(\frac{p_{x,\mathrm{e}}(1,1)p_{x,\mathrm{e}}(0,0)}{p_{x,\mathrm{e}}(1,0)p_{x,\mathrm{e}}(0,1)}\right)\textrm{.}
\end{flalign*}
Obviously, equality holds if and only if $x\in\mathcal X_{\mathrm{st}}$, hence $p$ is a stationary point of $\Delta_d$ iff $p\in\mathcal P_{\mathrm{st}}$.
\end{proof}
Lemma \ref{m_optimization_stationary} does not only allow to translate the stationary points of $\Delta_d$ into fixed points of $\iota_{\mathrm{rr}}$, it also allows to translate the types as follows.
\begin{lemma}\label{m_optimization_stationary_type}
Fix $x\in\mathbb R_{>0}$.
Then we have $\iota_{\mathrm{rr}}(x)<x$ if and only if $(\Delta_d\circ\iota_{\mathrm{rp}})'(x)>0$,
$\iota_{\mathrm{rr}}(x)>x$ if and only if $(\Delta_d\circ\iota_{\mathrm{rp}})'(x)<0$, and
$\iota_{\mathrm{rr}}(x)=x$ if and only if $(\Delta_d\circ\iota_{\mathrm{rp}})'(x)=0$.
\end{lemma}
\begin{proof}
Fix $x\in\mathbb R_{>0}$. The proof of Lemma \ref{m_optimization_stationary} directly suggests that the first derivative of $\Delta_d$ at $p_x$ in the direction $b_1$ is strictly positive if and only if
\begin{flalign*}
0<\ln(x)-\frac{d-1}{d}\ln(\iota^*_{\mathrm{rr}}(x))\textrm{,}
\end{flalign*}
which holds if and only if $\iota_{\mathrm{rr}}(x)<x$.
We're left to establish that the direction of $\iota_{\mathrm{rp}}$ is consistent with $b_1$.
Intuitively, using Lemma \ref{p_second_moment_empirical_overlap_distributions} and Lemma \ref{m_optimization_minimizers_local} we can argue that $x\mapsto p_{x,\mathrm{e}}(1,1)$ is a bijection and hence either increasing or decreasing. Taking the limits $x\rightarrow 0$ and $x\rightarrow\infty$ suggests that it is increasing, hence with $c\in\mathbb R^2$ denoting the coordinates of $\iota'_{\mathrm{rp}}$, i.e.~$\iota'_{\mathrm{rp}}(x)=(b_1,b_2)c$, we know that $c_1\ge 0$.

Formally, we quantify the direction of $\iota_{\mathrm{rp}}$. For this purpose we compute the derivative of $\iota_{\mathrm{rp}}$ at $x\in\mathbb R_{>0}$, given by
\begin{flalign*}
\iota'_{\mathrm{rp}}(x)=\left(\frac{sp^*(s)x^{s-1}\sum_{s'\in\{0,1,2\}}p^*(s')x^{s'}-p^*(s)x^{s}\sum_{s'\in\{0,1,2\}}s'p^*(s')x^{s'-1}}{\left(\sum_{s'\in\{0,1,2\}}p^*(s')x^{s'}\right)^2}\right)_{s\in\{0,1,2\}}\textrm{.}
\end{flalign*}
Notice that $v=\iota'_{\mathrm{rp}}(x)\in 1_{\{0,1,2\}}^{\perp}$, since $\iota_{\mathrm{rp}}(\mathbb R_{>0})\subseteq\mathcal P(\{0,1,2\})$ or by computing $\sum_sv_s=0$ directly.
Now, let $c\in\mathbb R^2$ be the coordinates of $v$, i.e.~$v=(b_1,b_2)c$.
This directly gives $c_2=v_2$ and hence $c_1=d^{-1}(v_1+2v_2)=d^{-1}\sum_ssv_s$.
Now, notice that
\begin{flalign*}
S&=dxc_1
=\sum_{s,s'\in\{0,1,2\}}p_x(s)p_x(s')s(s-s')\\
&=\sum_{s>s'}p_x(s)p_x(s')s(s-s')-\sum_{s>s'}p_x(s)p_x(s')s'(s-s')
=\sum_{s>s'}p_x(s)p_x(s')(s-s')^2>0\textrm{,}
\end{flalign*}
which directly gives $c_1=\frac{S}{dx}\in\mathbb R_{>0}$.
Now, with $\nabla=\left(\frac{\partial \Delta_d}{\partial p(s)}(p_x)\right)_{s\in\{0,1,2\}}\in\mathbb R^{\{0,1,2\}}$ denoting the partial derivatives of $\Delta_d$ at $p_x$ and using the chain rule we have
\begin{flalign*}
(\Delta_d\circ\iota_{\mathrm{rp}})'(x)
&=\nabla^{\mathrm{t}}\iota'_{\mathrm{rp}}(x)
=c_1\nabla^{\mathrm{t}}b_1
+c_2\nabla^{\mathrm{t}}b_2
=c_1\nabla^{\mathrm{t}}b_1\textrm{,}
\end{flalign*}
since the derivative $\nabla^{\mathrm{t}}b_2$ of $\Delta_d$ at $p_x$ in the direction $b_2$ is zero,
hence we have $(\Delta_d\circ\iota_{\mathrm{rp}})'(x)>0$ if and only if the derivative $\nabla^{\mathrm{t}}b_1$ of $\Delta_d$ at $p_x$ in the direction $b_1$ is strictly positive, which is the case if and only if $\iota_{\mathrm{rr}}(x)<x$.
\end{proof}
Lemma \ref{m_optimization_stationary_type} with Lemma \ref{m_optimization_stationary} shows that control over $\iota_{\mathrm{rr}}$ gives complete control over the location and characterization of the stationary points of $\Delta_d$.
However, instead of solving the fixed point equation given by $\iota_{\mathrm{rr}}$ directly, we use a slight modification inspired by the \emph{belief propagation} algorithm applied to the constraint satisfaction discussed in Section \ref{p_second_moment_CSP} and initialized with uniform messages.

For this purpose let $N\in\mathbb Z_{\ge 0}$, further $N_1$, $N_2\in[N]_0$ and the hypergeometric distribution $p_{N,N_1,N_2}\in\mathcal P(\mathbb Z)$ be given by
\begin{flalign*}
p_{N,N_1,N_2}(s)=\frac{\binom{N_1}{s}\binom{N-N_1}{N_2-s}}{\binom{N}{N_2}}=\frac{\binom{N}{N-N_1-N_2+s,N_1-s,N_2-s,s}}{\binom{N}{N_1}\binom{N}{N_2}}\textrm{ for }s\in\mathbb Z\textrm{.}
\end{flalign*}
The latter form directly shows that $p_{N,N_1,N_2}=p_{N,N_2,N_1}$ and further highlights the intuition behind $p^*=p_{k,2,2}$ in the context of the squared constraint satisfaction problem, i.e.~$p^*(s)$ gives the probability of seeing a certain overlap $s\in\{0,1,2\}$ when drawing two satisfying constraint assignments for the standard problem uniformly and independently.
Now, for $y\in\{0,1\}^2$ let $p^*_y\in\mathcal P(\{0,1,2\})$ be given by $p^*_{(1,1)}(s)=p_{k-1,1,1}(s-1)$, $p^*_{(1,0)}(s)=p_{k-1,1,2}(s)$, $p^*_{(0,1)}(s)=p_{k-1,2,1}(s)$, $p^*_{(0,0)}(s)=p_{k-1,2,2}(s)$ for $s\in\{0,1,2\}$.
Hence, $p^*_y(s)$ gives the probability of seeing a certain overlap $s\in\{0,1,2\}$ when drawing two satisfying constraint assignments for the standard problem uniformly and independently, but knowing the pair $y$ of values of one (fixed or random) coordinate.
In particular, this explains why $p^*_{(1,1)}(0)=0$.
Further, notice that $p^*_{(1,0)}=p^*_{(0,1)}$ and finally that the matrix $W'$ in the proof of Lemma \ref{m_laplace_integral} is given by $W'=(p^*_y)_{y\in\{0,1\}^2}$.

On the other hand, for $y\in\{0,1\}^2$ let $p'_y\in\mathcal P(\{0,1,2\})$ be given by $p'_{(1,1)}(s)=p_{k-1,1,1}(s)$ for $s\in\{0,1,2\}$ and further $p'_y=p^*_y$ for $y\in\{0,1\}^2\setminus\{(1,1)\}$.
Hence, knowing the value $y$ of one (fixed or random) coordinate, $p'_y(s)$ gives the probability of seeing a certain overlap \emph{on the remaining coordinates}.
This explains both why $p'_{(1,1)}(s)=p^*_{(1,1)}(s-1)$ and $p'_y(s)=p^*_y(s)$ for $y\neq(1,1)$.
Finally, for a distribution $p\in\mathcal P(\{0,1,2\})$ let $f_p:\mathbb R_{\ge 0}\rightarrow\mathbb R_{\ge 0}$, $x\mapsto\sum_{s\in\{0,1,2\}}p(s)x^s$, be its (factorial) moment generating function, and further $\iota_{\mathrm{BP}}:\mathbb R_{>0}\rightarrow\mathbb R_{>0}$ given by
\begin{flalign*}
\iota_{\mathrm{BP}}(x)=\iota^*_{\mathrm{BP}}(x)^{d-1}\textrm{, }
\iota^*_{\mathrm{BP}}(x)=\frac{f_{p'_{(1,1)}}(x)f_{p'_{(0,0)}}(x)}{f_{p'_{(1,0)}}(x)f_{p'_{(0,1)}}(x)}\textrm{, for }x\in\mathbb R_{>0}\textrm{.}
\end{flalign*}
\begin{lemma}\label{m_optimization_fixed_point_equations}
Fix $x\in\mathbb R_{>0}$.
Then we have $\iota_{\mathrm{rr}}(x)<x$ if and only if $\iota_{\mathrm{BP}}(x)<x$,
$\iota_{\mathrm{rr}}(x)>x$ if and only if $\iota_{\mathrm{BP}}(x)>x$, and
$\iota_{\mathrm{rr}}(x)=x$ if and only if $\iota_{\mathrm{BP}}(x)=x$.
\end{lemma}
\begin{proof}
First, notice that the normalization constant of $p_x$ cancels out in $\iota^*_{\mathrm{rr}}$, as does the normalization constant $\binom{k}{2}^2$ of $p^*$. Further, with $v=(k-4+s,2-s,2-s,s)^{\mathrm{t}}\in\mathbb R^{\{0,1\}^2}$ we have $W_{y,s}\binom{k}{v}=\frac{v_y}{k}\binom{k}{v}=\binom{k-1}{v-(\delta_{y,z})_z}$ for $y\in\{0,1\}^2$, $s\in\{0,1,2\}$, and thereby
\begin{flalign*}
\iota^*_{\mathrm{rr}}(x)=\frac{\left(\sum_s\binom{k-1}{k-4+s,2-s,2-s,s-1}x^s\right)\left(\sum_s\binom{k-1}{k-4+s-1,2-s,2-s,s}x^s\right)}{\left(\sum_s\binom{k-1}{k-4+s,2-s,2-s-1,s}x^s\right)\left(\sum_s\binom{k-1}{k-4+s,2-s-1,2-s,s}x^s\right)}
\textrm{ for }x\in\mathbb R_{>0}\textrm{.}
\end{flalign*}
Now, since the normalization constants cancel out in total, this directly gives
\begin{flalign*}
\iota^*_{\mathrm{rr}}(x)=\frac{f_{p^*_{(1,1)}}(x)f_{p^*_{(0,0)}}(x)}{f_{p^*_{(1,0)}}(x)f_{p^*_{(0,1)}}(x)}
=x\iota^*_{\mathrm{BP}}(x)
\textrm{ for }x\in\mathbb R_{>0}\textrm{,}
\end{flalign*}
using that $p^*_{(1,1)}(s)=p'_{(1,1)}(s-1)$ for $s\in\{0,1,2\}$, hence $f_{p^*_{(1,1)}}(x)=xf_{p'_{(1,1)}}(x)$, and $p^*_y(s)=p'_y(s)$ for $y\neq(1,1)$.
Now, we have $x=\iota_{\mathrm{rr}}(x)$ if and only if $x=x^{\frac{d-1}{d}}\iota_{\mathrm{BP}}(x)^{\frac{1}{d}}$, which holds if and only if $x^{\frac{1}{d}}=\iota_{\mathrm{BP}}(x)^{\frac{1}{d}}$, which then again is equivalent to
$x=\iota_{\mathrm{BP}}(x)$. Equivalence of the inequalities follows analogously.
\end{proof}
The following part is dedicated to the identification of the fixed points of $\iota_{\mathrm{BP}}$, and the only part where we actually require $r=2$ with the occupation number $r$ as defined in Section \ref{s_occupation_problems}.
We start with a discussion of $\iota^*_{\mathrm{BP}}$.
For this purpose let $g^*_1$, $g^*_k:\mathbb R_{\ge 1}\rightarrow\mathbb R_{\ge 1}$ be given by
\begin{flalign*}
g^*_1(x)=\frac{1}{k-1}(x-1)+1\textrm{ and }
g^*_k(x)=\frac{13k-12}{27(k-1)(k-2)}(x-k)+\frac{2(7k-12)}{9(k-2)}\textrm{, }x\in\mathbb R_{\ge 1}\textrm{.}
\end{flalign*}
\begin{lemma}\label{m_optimization_base_case}
For any $k\in\mathbb Z_{\ge 4}$ we have
\begin{flalign*}
\lim_{x\rightarrow 0}\iota^*_{\mathrm{BP}}(x)=\frac{k-4}{k-3}\textrm{, }
\iota^*_{\mathrm{BP}}(1)=1=g^*_1(1)\textrm{ and }
\iota^*_{\mathrm{BP}}(k)=g^*_k(k)\textrm{.}
\end{flalign*}
For the first derivative ${\iota^*}'_{\mathrm{BP}}$ we have 
\begin{flalign*}
{\iota^*}'_{\mathrm{BP}}(1)={g^*}'_1(1)\textrm{, }
\quad
{\iota^*}'_{\mathrm{BP}}(k)={g^*}'_k(k)\textrm{ and} \quad
\lim_{x\rightarrow\infty}{\iota^*}'_{\mathrm{BP}}(x)=\frac{1}{2(k-2)}\textrm{.}
\end{flalign*}
Moreover, for the second derivative we have that
\begin{flalign*}
{\iota^*}''_{\mathrm{BP}}(x)<0\textrm{ for }x\in(0,k)\textrm{, } \quad
{\iota^*}''_{\mathrm{BP}}(k)=0\textrm{,} \quad 
{\iota^*}''_{\mathrm{BP}}(x)>0\textrm{ for }x\in\mathbb R_{>k}\textrm{.}
\end{flalign*}
For $k=4$ and $x\in\mathbb R_{>0}$ we have $\iota^*_{\mathrm{BP}}(x^{-1})=\left(\iota^*_{\mathrm{BP}}(x)\right)^{-1}$.
\end{lemma}
\begin{proof}
Using $f_p(1)=1$ for the moment generating function of any finitely supported law $p$, we have $\iota^*_{\mathrm{BP}}(1)=1$. Further, using that the first moment of a hypergeometric distribution $p_{N,N_1,N_2}$ is $\frac{N_1N_2}{N}$ and that $f'_p(1)$ is the first moment of $p$, we have ${\iota^*}'_{\mathrm{BP}}(1)=\frac{1}{k-1}+\frac{4}{k-1}-2\cdot\frac{2}{k-1}=\frac{1}{k-1}$.
The symmetry of $\iota^*_{\mathrm{BP}}$ for the special case $k=4$ can be seen as follows.
First, recall that $p_{N,N_1,N_2}(s)=p_{N,N-N_1,N_2}(N_2-s)$ for any hypergeometric distribution $p_{N,N_1,N_2}$.
For $s\in\{0,1,2\}$ this gives 
\begin{flalign*}
p'_{(0,0)}(s)&=p_{3,2,2}(s)=p_{3,1,2}(2-s)=p'_{(1,0)}(2-s)\\
&=p'_{(0,1)}(2-s)=p_{3,2,1}(2-s)=p_{3,1,1}(s-1)=p'_{(1,1)}(s-1)\textrm{.}
\end{flalign*}
These relations can be directly translated to the moment generating functions, i.e.
\begin{flalign*}
f_{p'_{(0,0)}}(x)&=x^2f_{p'_{(1,0)}}(x^{-1})=x^2f_{p'_{(0,1)}}(x^{-1})=xf_{p'_{(1,1)}}(x)
\end{flalign*}
for $x\in\mathbb R_{>0}$. Using these transformations we have
\begin{flalign*}
\iota^*_{\mathrm{BP}}(x^{-1})
=\frac{f_{p'_{(1,1)}}(x^{-1})f_{p'_{(0,0)}}(x^{-1})}{f_{p'_{(1,0)}}(x^{-1})f_{p'_{(0,1)}}(x^{-1})}
=\frac{x^{-1}f_{p'_{(1,0)}}(x)x^{-2}f_{p'_{(0,1)}}(x)}{x^{-1}f_{p'_{(1,1)}}(x)x^{-2}f_{p'_{(0,0)}}(x)}
=\left(\iota^*_{\mathrm{BP}}(x)\right)^{-1}\textrm{.}
\end{flalign*}
For $k\in\mathbb Z_{\ge 4}$ and $x\in\mathbb R_{>0}$ direct computation gives
\begin{flalign*}
\iota^*_{\mathrm{BP}}(x)&=\frac{1}{2(k-2)}x+\frac{2k-5}{2(k-2)}+\frac{(k-1)(k-3)(x-1)}{2(k-2)(2x+k-3)^2}\textrm{,}\\
{\iota^*}'_{\mathrm{BP}}(x)&=\frac{1}{2(k-2)}+\frac{(k-1)(k-3)(-2x+k+1)}{2(k-2)(2x+k-3)^3}\textrm{,}\\
{\iota^*}''_{\mathrm{BP}}(x)&=\frac{4(k-1)(k-3)(x-k)}{(k-2)(2x+k-3)^4}\textrm{.}
\end{flalign*}
The remaining assertions follow immediately or with routine computations.
\end{proof}
Lemma \ref{m_optimization_base_case} has the following immediate consequences.
\begin{corollary}\label{m_optimization_d_le_2}
For any $d\in(0,2]$ we have $\iota^*_{\mathrm{BP}}\in(x,1)$ for $x\in(0,1)$ and $\iota^*_{\mathrm{BP}}(x)\in(1,x)$ for $x\in\mathbb R_{>1}$.
In particular, $p^*$ is the unique minimizer of $\Delta_d$.
\end{corollary}
\begin{proof}
Using Lemma \ref{m_optimization_base_case} we notice that ${\iota^*}'_{\mathrm{BP}}(x)\in[{\iota^*}'_{\mathrm{BP}}(k),\frac{1}{k-1}]\subset(0,1)$ for $x\in\mathbb R_{\ge 1}$ and $\iota^*_{\mathrm{BP}}(x)=x$ for $x=1$, hence we have $\iota^*_{\mathrm{BP}}(x)\in(1,x)$ for $x\in\mathbb R_{>1}$.
For $k=4$ this gives $\iota^*_{\mathrm{BP}}(x)\in(x,1)$ using the symmetry result.
For $k\in\mathbb Z_{>4}$ we have $\lim_{x\rightarrow 0}\iota^*_{\mathrm{BP}}(x)>0$, which gives $x^*=\inf\{x\in\mathbb R_{>0}:\iota^*_{\mathrm{BP}}(x)<x\}\in(0,1]$ using $\iota^*_{\mathrm{BP}}(1)=1$.
Assume that $x^*<1$, then using the continuity of $x-\iota^*_{\mathrm{BP}}(x)$ we directly get $\iota^*_{\mathrm{BP}}(x^*)=x^*$, and further ${\iota^*}'_{\mathrm{BP}}(x^*)\le 1$ since $\iota^*_{\mathrm{BP}}(x)>x$ for $x\in(0,x^*)$.
But then, since ${\iota^*}''_{\mathrm{BP}}(x)<0$ for $x\in(0,k)$, this implies that ${\iota^*}'_{\mathrm{BP}}(x)<1$ for $x\in(x^*,1]$ which yields that ${\iota^*}_{\mathrm{BP}}(1)<1$ and hence a contradiction.
This shows that $\iota^*_{\mathrm{BP}}(x)\in(x,1)$ for $x\in(0,1)$.
Now, for any $d\in(0,2]$ we have $\iota_{\mathrm{BP}}(x)>\iota^*_{\mathrm{BP}}(x)\in(x,1)$ for $x\in(0,1)$ and 
$\iota_{\mathrm{BP}}(x)<\iota^*_{\mathrm{BP}}(x)\in(1,x)$ for $x\in\mathbb R_{>1}$.
Hence, using Lemma \ref{m_optimization_stationary_type} and Lemma \ref{m_optimization_fixed_point_equations} we immediately get that $p^*=p_1$ is the unique minimizer of $\Delta_d$.
\end{proof}
Corollary \ref{m_optimization_d_le_2} covers the case of simple graphs that was discussed in \cite{robalewska1996}.
Lemma \ref{p_second_moment_laplace_conditions} shows that this result is also immediate once we solve the optimization for $d=d^*$. On the other hand, Corollary \ref{m_optimization_d_le_2} suggests that Proposition \ref{s_second_moment_optimization} can only hold if $d^*>2$.
\begin{corollary}\label{m_optimization_d_star_g_2}
For all $k\in\mathbb Z_{\ge 4}$ we have $d^*\in\mathbb R_{>2}$.
\end{corollary}
\begin{proof}
For $d\in\mathbb R_{>0}$ let $f(d)=(\Delta_d\circ\iota_{\mathrm{rp}})(\infty)$ and notice that $f(d)=\frac{k}{d}\phi_1$.
Corollary \ref{m_optimization_d_le_2} shows that $f(d)>0$ for all $d\in(0,2]$.
On the other hand, as derived in Section \ref{s_first_moment} we know that $d^*$ is the unique root of $f$, which shows that $d^*>2$.
\end{proof}
Based on Corollary \ref{m_optimization_d_le_2} we can restrict to $d\in\mathbb R_{>2}$, while Corollary \ref{m_optimization_d_star_g_2} motivates the discussion of this interval.
Further, the restriction $d\in\mathbb R_{>2}$ ensures that $x^{d-1}$ is increasing and convex on $\mathbb R_{>0}$.
In the following we will consider the intervals $\mathbb R_{\ge k}$, $[\bar x,k]$, $[1,\bar x]$ and $(0,1]$ independently, where $\bar x=\frac{1}{7}(k+6)\in(1,k)$ is the intersection of $g_1$ and $g_k$ with $g_1(\bar x)=g_k(\bar x)=\frac{8}{7}$.
\begin{lemma}\label{m_optimization_beyond_inflection}
For $d\in(2,d_k)$, with $d_k=\frac{\ln(k)}{\ln(\iota^*_{\mathrm{BP}}(k))}+1$, there exists $x_{\max}\in\mathbb R_{>k}$ such that $\iota_{\mathrm{BP}}(x)<x$ for $x\in[k,x_{\max})$, $\iota_{\mathrm{BP}}(x_{\max})=x_{\max}$ and $\iota_{\mathrm{BP}}(x)>x$ for $x\in\mathbb R_{>x_{\max}}$.
\end{lemma}
\begin{proof}
Let $f(d)=\left(\iota_{\mathrm{BP}}^*(k)\right)^{d-1}$ for $d\in\mathbb R_{\ge 2}$, i.e.~$f(d)=\iota_{\mathrm{BP}}(k)$ is the value of $\iota_{\mathrm{BP}}$ at $k$ under a variation of $d$.
We know from Lemma \ref{m_optimization_base_case} that $\iota_{\mathrm{BP}}^*(k)\in\mathbb R_{>1}$, hence $f(d)$ is strictly increasing, and further direct computation gives $f(d_k)=k$, so we have $\iota_{\mathrm{BP}}(k)<k$ for any $d\in(2,d_k)$.
Now, since $\iota^*_{\mathrm{BP}}$ is strictly increasing and convex on $\mathbb R_{>k}$ by Lemma \ref{m_optimization_base_case} and further the function $x^{d-1}$ is increasing and convex for $d\in(2,d_k)$, we know that $\iota_{\mathrm{BP}}$ is convex and increasing on $\mathbb R_{>k}$, or formally
\begin{flalign*}
\iota'_{\mathrm{BP}}(x)&=(d-1)\iota^{*}_{\mathrm{BP}}(x)^{d-2}{\iota^*}'_{\mathrm{BP}}(x)=(d-1)\iota_{\mathrm{BP}}(x)\frac{{\iota^*}'_{\mathrm{BP}}(x)}{\iota^*_{\mathrm{BP}}(x)}>0\textrm{, }\\
\iota''_{\mathrm{BP}}(x)&=(d-1)\iota_{\mathrm{BP}}(x)\left[(d-2)\left(\frac{{\iota^*}'_{\mathrm{BP}}(x)}{\iota^*_{\mathrm{BP}}(x)}\right)^2+\frac{{\iota^*}''_{\mathrm{BP}}(x)}{\iota^*_{\mathrm{BP}}(x)}\right]>0\textrm{.}
\end{flalign*}
Using Lemma \ref{m_optimization_base_case} and for $x\rightarrow\infty$ we have
$\iota^{*}_{\mathrm{BP}}(x)\rightarrow\infty$ since ${\iota^*}'_{\mathrm{BP}}(x)\rightarrow\frac{1}{2(k-2)}$ and hence
$\iota'_{\mathrm{BP}}(x)\rightarrow\infty$ by the above, i.e.~$\iota_{\mathrm{BP}}(x)-x\rightarrow\infty$, which suggests the existence of $x_{\max}\in\mathbb R_{>k}$ with $\iota_{\mathrm{BP}}(x_{\max})=x_{\max}$ since $\iota_{\mathrm{BP}}(k)<k$.
Now, let $x_+=\inf\{x\in\mathbb R_{\ge k}:\iota_{\mathrm{BP}}(x)\ge x\}$, then we have $x_+\in(k,x_{\max}]$.
Since $\iota_{\mathrm{BP}}(x)<x$ for $x\in[k,x_+)$ we need $\iota'_{\mathrm{BP}}(x_+)\ge 1$, which gives $\iota'_{\mathrm{BP}}(x)>1$ for $x>x_+$ since $\iota''_{\mathrm{BP}}(x)>0$, hence $\iota_{\mathrm{BP}}(x)>x$, thereby $x_+=x_{\max}$, and in summary $\iota_{\mathrm{BP}}(x)<x$ for $x\in[k,x_{\max})$, $\iota_{\mathrm{BP}}(x_{\max})=x_{\max}$ and $\iota_{\mathrm{BP}}(x)>x$ for $x\in\mathbb R_{>x_{\max}}$.
\end{proof}
The proof of Lemma \ref{m_optimization_beyond_inflection} serves as a blueprint for the next two cases, where we do not consider $\iota_{\mathrm{BP}}$ directly since ${\iota^*}''_{\mathrm{BP}}(x)<0$ on $(1,k)$, but work with $g_k(x)=g^*_k(x)^{d-1}$ and $g_1(x)=g^*_1(x)^{d-1}$ instead, which are convex, increasing and upper bounds for $\iota_{\mathrm{BP}}$ on $[1,k]$ since ${\iota^*}''_{\mathrm{BP}}(x)<0$ on $(1,k)$.
In the spirit of Lemma \ref{m_optimization_beyond_inflection} we continue to consider the maximal domain for $d\in\mathbb R_{>2}$. Let
\begin{flalign*}
d_{\bar x}=\frac{\ln(\bar x)}{\ln\left(g_1(\bar x)\right)}+1\textrm{ and  }
d_{\max}=\min\left(d_{\bar x},d_k\right)\textrm{.}
\end{flalign*}
We postpone the proof that $d^*\le d_{\max}$, instead we focus on the interval $(1,k)$.
\begin{lemma}\label{m_optimization_below_inflection}
For any $d\in(2,d_{\max})\subseteq(2,k)$ and all $x\in(1,k]$ we have $\iota_{\mathrm{BP}}(x)<x$.
\end{lemma}
\begin{proof}
Fix $d\in(2,d_{\max})$.
Since ${\iota^*}''_{\mathrm{BP}}(x)<0$ for $x\in[1,k)$, we know that
$\iota^*_{\mathrm{BP}}(x)\le g^*_k(x)$ for $x\in[\bar x,k]$ and 
$\iota^*_{\mathrm{BP}}(x)\le g^*_1(x)$ for $x\in[1,\bar x]$, so using that $x^{d-1}$ is increasing we have that
$\iota_{\mathrm{BP}}(x)\le g_k(x)$ for $x\in[\bar x,k]$ and
$\iota_{\mathrm{BP}}(x)\le g_1(x)$ for $x\in[1,\bar x]$.
Analogous to Lemma \ref{m_optimization_beyond_inflection} we notice that $g_k(k)=\iota_{\mathrm{BP}}(k)<k$ since $d<d_k$, that $g_k(\bar x)=g_1(\bar x)<\bar x$ since $d<d_{\bar x}$ and that $g_1(1)=1$.
Further, since $g^*_1$, $g^*_k$ are increasing and convex, using that $x^{d-1}$ is increasing and convex yields that $g_1$, $g_k$ are increasing and convex.
In particular, we can upper bound $g_k$ with the line $l_k:[\bar x,k]\rightarrow[g_k(\bar x),g_k(k)]$ connecting $(\bar x,g_k(\bar x))$ and $(k,g_k(k))$, which is entirely and strictly under the diagonal.
Analogously, we can upper bound $g_1$ with the line $l_1:[1,\bar x]\rightarrow[1,g_1(\bar x)]$ connecting $(1,1)$ and $(\bar x,g_1(\bar x))$, which is also entirely and strictly under the diagonal except for $(1,1)$ where the two lines intersect.
In total, $\iota_{\mathrm{BP}}(x)\le\min(g_1(x),g_k(x))\le\min(l_1(x),l_k(x))<x$ for all $x\in(1,k]$.
Finally, another implication is that $\frac{d-1}{k-1}=g'_1(1)\le l'_1(1)<1$ since $l_1$ is below the diagonal, which suggests that $d_{\max}\le k$.
\end{proof}
Combining Lemma \ref{m_optimization_beyond_inflection} and Lemma \ref{m_optimization_below_inflection} shows for any $d\in(2,d_{\max})$ that $\Delta_d\circ\iota_{\mathrm{rp}}$ has exactly one stationary point $x_{\max}$ on $\mathbb R_{>1}$ which is the unique maximizer of $\Delta_d\circ\iota_{\mathrm{rp}}$ on this interval.
Further, since $d_{\max}\le k$ and using $\iota'_{\mathrm{BP}}(1)=\frac{d-1}{k-1}$ we also know that $x=1$ is an isolated minimizer of $\Delta_d\circ\iota_{\mathrm{rp}}$.
Aside, notice that this argumentation can be used to show that $H_d$ as defined in Section \ref{s_second_moment_results} is positive semi-definite for all $d<k$, hence with the arguments from the proof of Lemma \ref{p_second_moment_laplace_conditions} we see that $H_d$ is positive definite for all $d<k$ and finally with Lemma \ref{m_laplace_integral} that $\eta_{\chi^2}=\frac{1}{k-1}$.

For the low overlap region $x\in(0,1)$ we need a significantly different approach, since $\iota^*_{\mathrm{BP}}$ is increasing and concave, but $\iota_{\mathrm{BP}}(x)<\iota^*_{\mathrm{BP}}(x)$ and we need to show that
$\iota_{\mathrm{BP}}(x)>x$.
This means that first order approximations as used for $(1,k)$ are useless since they are upper bounds to $\iota^*_{\mathrm{BP}}$ and there are no immediate implications for $\iota''_{\mathrm{BP}}$ as was the case for $\mathbb R_{>k}$.
However, the symmetric case $k=4$ can be discussed easily.
\begin{corollary}\label{m_optimization_low_overlaps_symmetric}
For $k=4$ and $d\in(2,d_{\max})$ we have
$\iota_{\mathrm{BP}}(x)<x$ for $x\in(0,x_{\max}^{-1})$,
$\iota_{\mathrm{BP}}(x_{\max}^{-1})=x_{\max}^{-1}$, and
$\iota_{\mathrm{BP}}(x)>x$ for $x\in(x_{\max}^{-1},1)$.
\end{corollary}
\begin{proof}
Combining Lemma \ref{m_optimization_below_inflection} and Lemma \ref{m_optimization_beyond_inflection} we have $\iota_{\mathrm{BP}}(x)<x$ for $x\in(1,x_{\max})$ and $\iota_{\mathrm{BP}}(x)>x$ for $x\in(x_{\max},\infty)$, hence using the symmetry from Lemma \ref{m_optimization_base_case} directly gives the result.
\end{proof}
Corollary \ref{m_optimization_low_overlaps_symmetric} allows to restrict to $k\in\mathbb Z_{>4}$ in the remainder.
Now, we basically reverse the method used for the interval $(1,k)$, i.e.~instead of using tangents $g^*_1$, $g^*_k$ to $\iota^*_{\mathrm{BP}}$ and scaling them with $(d-1)$, we scale $\iota^*_{\mathrm{BP}}$ such that the diagonal is a tangent, meaning we consider $\iota_k=\iota_{\mathrm{BP}}$ for $d=k$ since $\iota'_{\mathrm{BP}}(1)=\frac{d-1}{k-1}$, and show that $\iota_k$ is sufficiently convex to ensure $\iota_k(x)>x$ for $x\in(0,1)$.
The next lemma ensures that this approach is applicable for all $k\ge 5$.
\begin{lemma}\label{m_optimization_low_overlaps}
For any $k\in\mathbb Z_{\ge 5}$, $d\in(2,k]$ and all $x\in(0,1)$ we have $\iota_{\mathrm{BP}}(x)>x$.
\end{lemma}
\begin{proof}
Let $k\in\mathbb Z_{\ge 5}$. As derived in the proof of Lemma \ref{m_optimization_below_inflection} we have $\iota_k(1)=1$ and $\iota'_k(1)=1$, i.e.~the diagonal is a tangent to $\iota_k$ at $x=1$. Further, as discussed in the proof of Lemma \ref{m_optimization_beyond_inflection},
\begin{flalign*}
\iota''_k(x)
&=(k-1)\frac{\iota_k(x)}{\iota^*_{\mathrm{BP}}(x)^2}
\left[(k-2){\iota^*}'_{\mathrm{BP}}(x)^2+\iota^*_{\mathrm{BP}}(x){\iota^*}''_{\mathrm{BP}}(x)\right]\textrm{ for }x\in(0,1)\textrm{.}
\end{flalign*}
Since the leading factor is clearly strictly positive, we may focus on the term in the square brackets.
Further, using that moment generating functions are strictly positive for strictly positive real numbers we can extract the strictly positive denominator of the term and normalize to get
\begin{flalign*}
f(x)&=\frac{\iota^*_{\mathrm{BP}}(x)^2\left[(k-1)f_{p'_{(1,0)}}(x)\right]^6(k-2)^2\iota''_k(x)}{(k-1)\iota_k(x)}=\sum_{i\in[6]_0}a_ix^i\textrm{, where}\\
a_6&=16(k-2)\textrm{,}\\
a_5&=48(k-2)(k-3)\textrm{,}\\
a_4&=4(k-3)\left[9(k-2)(k-3)+4(k-2)(k-4)+2(k-1)\right]\textrm{,}\\
a_3&=8(k-3)\left[3(k-2)_3+(k-2)(k^2-3k+4)+2(k-1)(k-4)\right]\textrm{,}\\
a_2&=4(k-3)b_2\textrm{, }
b_2=(k-2)_2\left[(k-4)^2+3(k^2-3k+4)\right]-(k-1)(k^2+11k-36)\textrm{,}\\
a_1&=4(k-3)^2b_1\textrm{, }b_1=(k-2)(k-4)(k^2-3k+4)-2(k-1)(2k^2-3k-4)\textrm{,}\\
a_0&=(k-2)(k-3)^2b_0\textrm{, }
b_0=(k^2-3k+4)^2-4k(k-1)(k-4)\textrm{.}
\end{flalign*}
We can easily verify that $a_i>0$ for $i\in[6]\setminus[2]$ using that $x^2-3x+4>0$ for all $x\in\mathbb R$.
Viewing the coefficients $b_i$, $i\in\{0,1,2\}$, as polynomials $b_i(x)$, $x\in\mathbb R$, of degree $4$ and evaluated at $x=k$, we have
\begin{flalign*}
b''_2(x)=48x^2-228x+254\textrm{, }b_2(5)=82\textrm{, }b'_2(5)=225\textrm{,}
\end{flalign*}
hence $b''_2(x)>0$ for $x>x_2$ with $x_2=\frac{19}{8}+\frac{\sqrt{201}}{24}<3$ and by that $b_2(x)>0$ for all $x\in\mathbb R_{\ge 5}$, so in particular $b_2=b_2(k)>0$ since $k\in\mathbb Z_{\ge 5}$ and thereby $a_2>0$.
Using the same technique for the degree four polynomials $b_1(x)$ we obtain that $b''_1(x)>0$ for $x>x_1$ with $x_1=\frac{13}{4}+\frac{\sqrt{561}}{12}<6$, $b_1(10)=564$, $b'_1(10)=854$, and hence that $b_1>0$ if $k\ge 10$. For $b_0(x)$ we have $b''_0(x)>0$ for $x>x_0$ with $x_0=\frac{5}{2}+\frac{\sqrt{3}}{6}<3$, $b'_0(3)=20$, $b_0(3)=40$, so $b_0>0$ for all $k\in\mathbb Z_{\ge 5}$.

Hence, for $k\in\mathbb Z_{\ge 10}$ we know that $a_i>0$ for all $i\in[6]_0$, which directly implies that $f(x)>0$ for all $x\in\mathbb R_{>0}$, so $\iota''_k(x)>0$, further $\iota'_k(x)<1$ for $x\in(0,1)$, $\iota'_k(x)>1$ for $x\in\mathbb R_{>1}$ and thereby $\iota_k(x)>x$ for $x\in\mathbb R_{>0}\setminus\{1\}$.

For $5\le k\le 9$ we still have $a_i>0$ for $i\in[6]_0\setminus\{1\}$.
For $6\le k\le 9$ we consider the quadratic function $g_k(x)=\sum_{i\in\{0,1,2\}}a_ix^i$ explicitly, given by
\begin{flalign*}
g_6(x)=6120x^2-11664x+8784\textrm{, }
g_7(x)=24960x^2-25344x+41600\textrm{, }\\
g_8(x)=72560x^2-34400x+156000\textrm{, }
g_9(x)=172944x^2-9504x+484848\textrm{.}
\end{flalign*}
It turns out that $g_k(x)>0$ for all $x\in\mathbb R$ and $6\le k\le 9$, which in particular yields $f(x)>0$ for all $x\in\mathbb R_{>0}$. Using the same argumentation as for $k\in\mathbb Z_{\ge 10}$ shows that $\iota_k(x)>x$ for all $x\in\mathbb R_{>0}\setminus\{1\}$ and $k\in\mathbb Z_{\ge 6}$.

As opposed to the previous cases the function $\iota_k$ is not convex for $k=5$, and while this slightly complicates the computation, we will show that this does not affect the overall picture.
Now, we consider the complete sixth order polynomial
\begin{flalign*}
f(x)=48x^6 + 288x^5 + 592x^4 + 1088x^3 + 656x^2 - 3296x + 1392\textrm{.}
\end{flalign*}
We notice that $f''(x)>0$ for all $x\in\mathbb R_{\ge 0}$ since $a_i>0$ for $i\in[6]\setminus\{1\}$, hence $f'(x)$ is strictly increasing for $x\in\mathbb R_{\ge 0}$, which shows the existence of a unique root $x_{\min}\in(0,1)$, using $f'(0)<0$ and $f'(1)>0$, i.e.~$x_{\min}$ is the unique minimizer of $f$ on $\mathbb R_{\ge 0}$.
Computing $f(x_{1-})>0$, $f(x_{1+})<0$ and $f(1)>0$ with $x_{1-}=0.581$ and $x_{1+}=0.582$ ensures the existence of exactly two roots $x_1\in(x_{1-},x_{1+})$ and $x_2\in(x_{1+},1)$ of $f$, hence $\iota_k$ is convex on $(0,x_1)$, concave on $(x_1,x_2)$ and convex on $\mathbb R_{>x_2}$.
Let $g:\mathbb R\rightarrow\mathbb R$, $x\mapsto\iota'_k(x_{1-})(x-x_{1-})+\iota_k(x_{1-})$ denote the tangent of $\iota_k$ at $x_{1-}$.
Since we can write both $\iota_k$ and $\iota'_k$ as the ratio of polynomials with integer coefficients, we can compute $\iota_k(x_{1-})\in(0.584,0.585)$, $\iota'_k(x_{1-})\in(0.99,0.991)$ and $g(x_{1+})\in(0.585,0.586)$ exactly.
Using the convexity of $\iota_k$ on $(0,x_1)$, $g'=\iota'_k(x_{1-})<1$ and $g(x_{1+})>x_{1+}$ immediately gives that $\iota_k(x)\ge g(x)>x$ for $x\in(0,x_1]$.
The fact that $\iota_k(x)>x$ for $x\in[x_2,1)$ immediately follows from the convexity of $\iota_k$ on $(x_2,1)$ and that the diagonal is a tangent to $\iota_k$ at $x=1$.
But now, since $(x_1,\iota_k(x_1))$ and $(x_2,\iota_k(x_2))$ are above the diagonal, so is the line connecting the two, which is a lower bound to $\iota_k$ on $(x_1,x_2)$ since $\iota_k$ is concave on this interval.
By that we have finally showed that the overall picture is also the same for $k=5$, i.e.~$\iota_k(x)>x$ for $x\in\mathbb R_{>0}\setminus\{1\}$.

The fact that $\iota_k(x)>x$, i.e.~$\iota_{\mathrm{BP}}(x)>x$ with $d=k$, for $x\in\mathbb R_{>0}\setminus\{1\}$
shows that the only stationary point of $\Delta_k$ is a saddle at $p^*$ (respectively a maximum of $\Delta_k\circ\iota_{\mathrm{rp}}$ at $x=1$). But more importantly, since $x^{d-1}$ is decreasing in $d$ for $x\in(0,1)$ and $\iota^*_{\mathrm{BP}}(x)\in(0,1)$, we have $\iota_{\mathrm{BP}}(x)\ge\iota_k(x)>x$ for all $d\in(2,k]$.
\end{proof}
The combination of Lemma \ref{m_optimization_beyond_inflection}, Lemma \ref{m_optimization_below_inflection} and Lemma \ref{m_optimization_low_overlaps_symmetric} shows that for $k=4$ and all $d\in(2,d_{\max})$ there exist exactly three fixed points $x_-<x_0<x_+$ of $\iota_{\mathrm{BP}}$, with $x_0=1$, $x_+\in(k,\infty)$ and $x_-=x_+^{-1}$, hence Lemma \ref{m_optimization_fixed_point_equations} and Lemma \ref{m_optimization_stationary_type} suggest that $x_0$ is a minimizer of $\Delta_d\circ\iota_{\mathrm{rp}}$ while $x_-$ and $x_+$ are maximizers. In particular, we have the three minimizers $\{0,1,\infty\}$ of $\Delta_d\circ\iota_{\mathrm{rp}}$ in total.

The combination of Lemma \ref{m_optimization_beyond_inflection}, Lemma \ref{m_optimization_below_inflection} and Lemma \ref{m_optimization_low_overlaps} shows that for $k\in\mathbb Z_{\ge 5}$ and all $d\in(2,d_{\max})\subseteq(0,k)$ there exist exactly two fixed points $x_0<x_+$ of $\iota_{\mathrm{BP}}$, with $x_0=1$ and $x_+\in(k,\infty)$, hence Lemma \ref{m_optimization_fixed_point_equations} and Lemma \ref{m_optimization_stationary_type} suggest that $x_0$ is a minimizer of $\Delta_d\circ\iota_{\mathrm{rp}}$ while $x_+$ is a maximizer. In particular, we have the two minimizers $\{1,\infty\}$ of $\Delta_d\circ\iota_{\mathrm{rp}}$ in total, while $x=0$ is a maximizer in these cases.

The last step is to show that we have $d^*\in(2,d_{\max})$, which then directly establishes that the unique minimizers of $\Delta_{d^*}\circ\iota_{\mathrm{rp}}$ are given by
$\{0,1,\infty\}$ for $k=4$ and 
$\{1,\infty\}$ for $k\in\mathbb Z_{\ge 5}$ as required.
On the other hand, direct computation as in the proof of Corollay \ref{m_optimization_d_star_g_2} shows that all minimizers are roots of $\Delta_{d^*}\circ\iota_{\mathrm{rp}}$, i.e.~all minimizers are global minimizers and the global minimum of $\Delta_{d^*}\circ\iota_{\mathrm{rp}}$ is $0$.
Lemma \ref{m_optimization_minimizers_local} directly suggests that the global minimizers of $\Delta_{d^*}\circ\iota_{\mathrm{rp}}$ are in one to one correspondence with the global minimizers of $\Delta_{d^*}$ via $x\mapsto p_x$, which then completes the proof of Proposition \ref{p_second_moment_optimization}.
\begin{lemma}\label{m_optimization_d_star_le_d_max}
For all $k\in\mathbb Z_{\ge 4}$ we have $2<d^*<d_k<d_{\bar x}$.
\end{lemma}
\begin{proof}
Recall from Corollary \ref{m_optimization_d_star_g_2} that $d^*>2$.
For convenience, we consider the extensions of $d^*-1$, $d_{\bar x}-1$ and $d_k-1$ to the real line, i.e.~for $x\in\mathbb R_{\ge 3}$ let
\begin{flalign*}
f_0(x)=\frac{\ln\left(\frac{1}{2}x(x-1)\right)}{xH(2/x)-\ln\left(\frac{1}{2}x(x-1)\right)}\textrm{, }
f_1(x)=\frac{\ln\left(\frac{1}{7}(x+6)\right)}{\ln\left(8/7\right)}\textrm{, }
f_2(x)=\frac{\ln(x)}{\ln\left(\frac{2(7x-12)}{9(x-2)}\right)}\textrm{,}
\end{flalign*}
i.e.~$d^*=f_0(k)+1$, $d_{\bar x}=f_1(k)+1$ and $d_k=f_2(k)+1$ for all $k\in\mathbb Z_{\ge 4}$.
We start with the asymptotical comparison of $f_1$ and $f_2$. The corresponding rearrangement gives
\begin{flalign*}
f_1(x)&=m_1\ln(x)+t_1(x)\textrm{, }m_1=\frac{1}{\ln(8/7)}\textrm{, }t_1(x)=-\frac{\ln(7)}{\ln(8/7)}+\frac{\ln\left(1+\frac{6}{x}\right)}{\ln(8/7)}\textrm{,}\\
f_2(x)&=m_2(x)\ln(x)\textrm{, }m_2(x)=\frac{1}{\ln(14/9)+\ln\left(1+\frac{2}{7(x-2)}\right)}\textrm{.}
\end{flalign*}
Notice that $\ln(x)>0$ since $x\ge 3$, further $t_1(x)$ is decreasing while $m_2(x)$ is increasing,
and thereby we have $f_1(x)\ge f_{1\infty}(x)$ and $f_2(x)\le f_{2\infty}(x)$ with
\begin{flalign*}
f_{1\infty}(x)
&=m_{1\infty}\ln(x)+t_{1\infty}\textrm{, }m_{1\infty}=m_1\textrm{, }t_{1\infty}=-\frac{\ln(7)}{\ln(8/7)}\textrm{,}\\
f_{2\infty}(x)
&=m_{2\infty}\ln(x)\textrm{, }m_{2\infty}=\frac{1}{\ln(14/9)}\textrm{.}
\end{flalign*}
Notice that $m_{1\infty}>m_{2\infty}$, $t_{1\infty}<0$ and further
$f_{1\infty}(x)>f_{2\infty}(x)$ iff $x>x_{12}$ with $x_{12}=\exp\left(\frac{-t_{1\infty}}{m_{1\infty}-m_{2\infty}}\right)\in(16,17)$, so $f_1(x)>f_2(x)$ for all $x\in\mathbb R_{\ge 17}$ and hence $d_k<d_{\bar x}$ for $k\in\mathbb Z_{\ge 17}$.
We check by hand that $d_k<d_{\bar x}$ also holds for $4\le k\le 16$.

Hence, we're left to show that $1<f_0(x)<f_2(x)$ for $x\in\mathbb Z_{\ge 4}$.
Again, we start with the asymptotical comparison, where the corresponding rearrangement
$f_0(x)=m_0(x)\ln(x)+t_0(x)$ is given by
\begin{flalign*}
m_0(x)&=\frac{1+\ln\left(1-\frac{1}{x}\right)}{n_0(x)}\textrm{,}~~
t_0(x)=\frac{-\ln(2)}{n_0(x)}\textrm{,}~~
n_0(x) =-(x-2)\ln\left(1-\frac{2}{x}\right)-\ln\left(1-\frac{1}{x}\right)-\ln(2)\textrm{.}
\end{flalign*}
Recall that for given $c\in\mathbb R$ we have $\ln(1+\frac{c}{x})\sim\frac{c}{x}$ and $\ln(1+\frac{c}{x})\le\frac{c}{x}$ for all $x\in\mathbb R_{>|c|}$, since $\ln(x)$ is concave and the tangent at $1$ is $x-1$.
Hence, for all $x\in\mathbb R_{\ge 3}$ we have $n_0(x)\ge n_+(x)>0$ since $x>2$ and $x>x_1$, where
\begin{flalign*}
n_+(x)=\ln\left(\frac{e^2}{2}\right)-\frac{3}{x}\textrm{ and }x_1=\frac{3}{\ln\left(\frac{e^2}{2}\right)}\in(2,3)\textrm{.}
\end{flalign*}
Since $t_0(x)<0$ and $m_0(x)\le m_+(x)$ we have $f_0(x)\le f_+(x)=m_+(x)\ln(x)$ with
\begin{flalign*}
m_+(x)=\frac{1}{\ln\left(\frac{e^2}{2}\right)-\frac{3}{x}}\textrm{.}
\end{flalign*}
Now, since $m_+(x)$ is decreasing in $x$ and $m_2(x)$ is increasing in $x$, we numerically determine $x^*\in\mathbb R_{>0}$ such that $m_+(x^*)=m_2(x^*)$ and find that $x^*\in(4,5)$. In particular, we have $f_0(x)\le f_+(x)<f_2(x)$ for $x\in\mathbb R_{\ge 5}$, and check that $d^*<d_k$ for $k=4$ by hand.
\end{proof}
Lemma \ref{m_optimization_d_star_le_d_max} concludes the proof of Proposition \ref{p_second_moment_optimization} as discussed before.
\begin{figure}
\begin{subfigure}[t]{0.497\textwidth}
\centering
\begin{tikzpicture}
  [scale=0.8, every node/.style={transform shape},
    var/.style={circle,draw,thick,minimum size=4mm},
    fac/.style={rectangle,draw,thick,minimum size=4mm},
    varedg/.style={circle,draw,thick,minimum size=2mm,inner sep=0},
    facedg/.style={rectangle,draw,thick,minimum size=2mm,inner sep=0}]
\node[var] (v1) at (4,5) {};\node[inner sep=0, outer sep=0] at (4,5.4) {$i_2$};
\node[var] (v2) at (1,1) {};\node[inner sep=0, outer sep=0] at (1,0.55) {$i_1$};
\node[var] (v3) at (7,1) {};\node[inner sep=0, outer sep=0] at (7,0.55) {$i_3$};
\node[fac] (f1) at (1,5) {};\node[inner sep=0, outer sep=0] at (1,5.4) {$a_2$};
\node[fac] (f2) at (7,5) {};\node[inner sep=0, outer sep=0] at (7,5.4) {$a_3$};
\node[fac] (f3) at (4,1) {};\node[inner sep=0, outer sep=0] at (4,0.55) {$a_1$};
\node[varedg] (ve11) at (5,5) {};\node[inner sep=0, outer sep=0] at (5.1,4.67) {$h_{i_2,3}$};
\node[varedg] (ve12) at (3,5) {};\node[inner sep=0, outer sep=0] at (3.1,4.67) {$h_{i_2,2}$};
\node[varedg] (ve13) at (4,3.67) {};\node[inner sep=0, outer sep=0] at (4.5,3.62) {$h_{i_2,1}$};
\node[varedg] (ve21) at (2,1) {};\node[inner sep=0, outer sep=0] at (1.8,1.35) {$h_{i_1,1}$};
\node[varedg] (ve22) at (1,2.33) {};\node[inner sep=0, outer sep=0] at (1.5,2.28) {$h_{i_1,2}$};
\node[varedg] (ve31) at (7,2.33) {};\node[inner sep=0, outer sep=0] at (6.5,2.28) {$h_{i_3,1}$};
\node[varedg] (ve32) at (6,1) {};\node[inner sep=0, outer sep=0] at (6.1,1.35) {$h_{i_3,2}$};
\node[facedg] (fe11) at (2,5) {};\node[inner sep=0, outer sep=0] at (2.1,4.67) {$h_{a_2,1}$};
\node[facedg] (fe12) at (1,3.67) {};\node[inner sep=0, outer sep=0] at (1.5,3.62) {$h_{a_2,2}$};
\node[facedg] (fe21) at (6,5) {};\node[inner sep=0, outer sep=0] at (6.1,4.67) {$h_{a_3,2}$};
\node[facedg] (fe22) at (7,3.67) {};\node[inner sep=0, outer sep=0] at (6.5,3.62) {$h_{a_3,1}$};
\node[facedg] (fe31) at (5,1) {};\node[inner sep=0, outer sep=0] at (4.8,1.35) {$h_{a_1,3}$};
\node[facedg] (fe32) at (4,2.33) {};\node[inner sep=0, outer sep=0] at (3.5,2.28) {$h_{a_1,2}$};
\node[facedg] (fe33) at (3,1) {};\node[inner sep=0, outer sep=0] at (3.1,1.35) {$h_{a_1,1}$};
\draw[thick](f1) to (fe11);\draw[thick] (fe11) to (ve12);\draw[thick] (ve12) to (v1);
\draw[thick](f1) to (fe12);\draw[thick] (fe12) to (ve22);\draw[thick] (ve22) to (v2);
\draw[thick](f2) to (fe21);\draw[thick] (fe21) to (ve11);\draw[thick] (ve11) to (v1);
\draw[thick](f2) to (fe22);\draw[thick] (fe22) to (ve31);\draw[thick] (ve31) to (v3);
\draw[thick](f3) to (fe31);\draw[thick] (fe31) to (ve32);\draw[thick] (ve32) to (v3);
\draw[thick](f3) to (fe32);\draw[thick] (fe32) to (ve13);\draw[thick] (ve13) to (v1);
\draw[thick](f3) to (fe33);\draw[thick] (fe33) to (ve21);\draw[thick] (ve21) to (v2);
\draw[->](1.3,1.1) to (1.8,1.1);
\draw[->](6.9,1.41) to (6.9,1.91);
\node[inner sep=0, outer sep=0] at (2.5,3) {$\gamma_1$};
\node[inner sep=0, outer sep=0] at (5.5,3) {$\gamma_2$};
\draw[->](2,2.5) to (3,2.5);\draw[->](3,2.5) to (3,3.5);\draw[->](3,3.5) to (2,3.5);\draw[->](2,3.5) to (2,2.5);
\draw[->](5,2.5) to (6,2.5);\draw[->](6,2.5) to (6,3.5);\draw[->](6,3.5) to (5,3.5);\draw[->](5,3.5) to (5,2.5);
\begin{scope}
\path (0,0) to (8,0) to (8,6) to (0,6) to (0,0);
\end{scope}
\end{tikzpicture}
\subcaption{pair $(\gamma_1,\gamma_2)$ of intersecting $4$-cycles}\label{f_relative_position_representation_gamma}
\end{subfigure}
\begin{subfigure}[t]{0.497\textwidth}
\centering
\begin{tikzpicture}
  [scale=0.8, every node/.style={transform shape},
    var/.style={circle,draw,thick,minimum size=4mm},
    fac/.style={rectangle,draw,thick,minimum size=4mm},
    varedg/.style={circle,draw,thick,minimum size=2mm,inner sep=0},
    facedg/.style={rectangle,draw,thick,minimum size=2mm,inner sep=0}]
\node[var] (v1) at (4,5) {};\node[inner sep=0, outer sep=0] at (4,5.35) {$2$};
\node[var] (v2) at (1,1) {};\node[inner sep=0, outer sep=0] at (1,0.55) {$1$};
\node[var] (v3) at (7,1) {};\node[inner sep=0, outer sep=0] at (7,0.55) {$3$};
\node[fac] (f1) at (1,5) {};\node[inner sep=0, outer sep=0] at (1,5.35) {$2$};
\node[fac] (f2) at (7,5) {};\node[inner sep=0, outer sep=0] at (7,5.35) {$3$};
\node[fac] (f3) at (4,1) {};\node[inner sep=0, outer sep=0] at (4,0.55) {$1$};
\node[varedg] (ve11) at (5,5) {};\node[inner sep=0, outer sep=0] at (5,4.67) {$3$};
\node[varedg] (ve12) at (3,5) {};\node[inner sep=0, outer sep=0] at (3,4.67) {$2$};
\node[varedg] (ve13) at (4,3.67) {};\node[inner sep=0, outer sep=0] at (4.3,3.62) {$1$};
\node[varedg] (ve21) at (2,1) {};\node[inner sep=0, outer sep=0] at (2,1.3) {$1$};
\node[varedg] (ve22) at (1,2.33) {};\node[inner sep=0, outer sep=0] at (1.3,2.28) {$2$};
\node[varedg] (ve31) at (7,2.33) {};\node[inner sep=0, outer sep=0] at (6.7,2.28) {$1$};
\node[varedg] (ve32) at (6,1) {};\node[inner sep=0, outer sep=0] at (6,1.3) {$2$};
\node[facedg] (fe11) at (2,5) {};\node[inner sep=0, outer sep=0] at (2,4.67) {$1$};
\node[facedg] (fe12) at (1,3.67) {};\node[inner sep=0, outer sep=0] at (1.3,3.62) {$2$};
\node[facedg] (fe21) at (6,5) {};\node[inner sep=0, outer sep=0] at (6,4.67) {$2$};
\node[facedg] (fe22) at (7,3.67) {};\node[inner sep=0, outer sep=0] at (6.7,3.62) {$1$};
\node[facedg] (fe31) at (5,1) {};\node[inner sep=0, outer sep=0] at (5,1.3) {$3$};
\node[facedg] (fe32) at (4,2.33) {};\node[inner sep=0, outer sep=0] at (3.7,2.28) {$2$};
\node[facedg] (fe33) at (3,1) {};\node[inner sep=0, outer sep=0] at (3,1.3) {$1$};
\draw[thick](f1) to (fe11);\draw[thick] (fe11) to (ve12);\draw[thick] (ve12) to (v1);
\draw[thick](f1) to (fe12);\draw[thick] (fe12) to (ve22);\draw[thick] (ve22) to (v2);
\draw[thick](f2) to (fe21);\draw[thick] (fe21) to (ve11);\draw[thick] (ve11) to (v1);
\draw[thick](f2) to (fe22);\draw[thick] (fe22) to (ve31);\draw[thick] (ve31) to (v3);
\draw[thick](f3) to (fe31);\draw[thick] (fe31) to (ve32);\draw[thick] (ve32) to (v3);
\draw[thick](f3) to (fe32);\draw[thick] (fe32) to (ve13);\draw[thick] (ve13) to (v1);
\draw[thick](f3) to (fe33);\draw[thick] (fe33) to (ve21);\draw[thick] (ve21) to (v2);
\draw[->](1.3,1.1) to (1.8,1.1);
\draw[->](6.9,1.41) to (6.9,1.91);
\node[inner sep=0, outer sep=0] at (2.5,3) {$\rho_1$};
\node[inner sep=0, outer sep=0] at (5.5,3) {$\rho_2$};
\draw[->](2,2.5) to (3,2.5);\draw[->](3,2.5) to (3,3.5);\draw[->](3,3.5) to (2,3.5);\draw[->](2,3.5) to (2,2.5);
\draw[->](5,2.5) to (6,2.5);\draw[->](6,2.5) to (6,3.5);\draw[->](6,3.5) to (5,3.5);\draw[->](5,3.5) to (5,2.5);
\begin{scope}
\path (0,0) to (8,0) to (8,6) to (0,6) to (0,0);
\end{scope}
\end{tikzpicture}
\subcaption{relative positions $(\rho_1,\rho_2)$ for $(\gamma_1,\gamma_2)$}\label{f_relative_position_representation_relative_positions}
\end{subfigure}
\captionsetup{width=0.89\textwidth, font=small}
\caption{The left figure shows a sequence $\gamma=(\gamma_1,\gamma_2)$ of two directed (intersecting) four-cycles with base variables $i_1$ and $i_3$ and directions indicated by the arrows respectively. Analogously to Figure \ref{f_related_problems_2_configuration} we only denoted the $i$-edges and $a$-edges instead of the v-edges and f-edges.
The relative positions $\rho=(\rho_1,\rho_2)$ corresponding to $\gamma$ are depicted in the right figure. Here, the variables, constraints, $i$-edges and $a$-edges are labeled according to the order of first traversal (where $\gamma_1$ is traversed before $\gamma_2$).
The numbers $n(\rho)=3$, $m(\rho)=3$, $e(\rho)=7$ of variables, constraints and edges in $\rho$ are equal to the corresponding numbers in $\gamma$, further the degree $d_j(\rho)$ of the variable $j\in[3]$ equals the degree of $i_j$ in $\gamma$, and analogously for the degrees $k_b(\rho)$ of the constraints $b\in[3]$ in $\rho$.
The absolute values $\alpha=(\alpha_v,\alpha_f,(\alpha_{v,j})_{j\in[3]},(\alpha_{f,b})_{b\in[3]})$ are given by $\alpha_v=(i_j)_{j\in[3]}$,
$\alpha_f=(a_b)_{b\in[3]}$,
$\alpha_{v,j}=(h_{i_j,e})_{e\in[d_j(\rho)]}$, $j\in[3]$, and
$\alpha_{f,b}=(h_{a_b,e})_{e\in[k_b(\rho)]}$, $b\in[3]$, i.e.~they store the (initial) labels of $\gamma$ corresponding to the labels of $\rho$.}\label{f_relative_position_representation}
\end{figure}
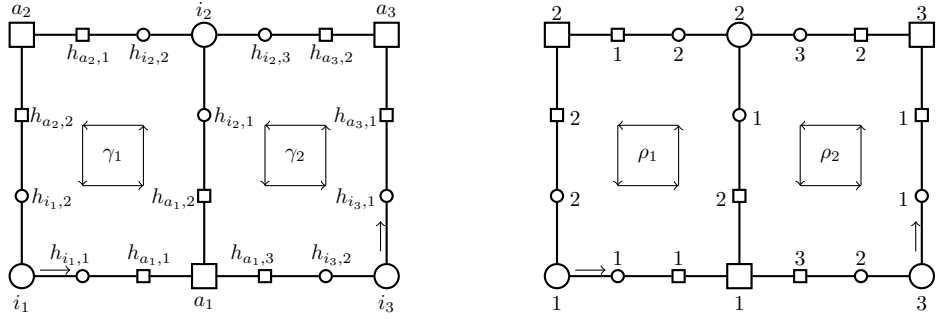
\section{Small Subgraph Conditioning}\label{s_small_subgraph_conditioning}
In this section we prove the remaining parts of Theorem \ref{small_subgraph_conditioning}, thereby establishing Theorem \ref{satisfiability_threshold_configuration}.
The first part of the proof heavily relies on Section \ref{a_number_of_cycles} and illustrates the correspondences.
We start with the derivation of  $\delta_\ell$ by computing $\mathbb E[ZX_\ell]$.
For this purpose we fix $\ell\in\mathbb Z_{>0}$, $n\in\mathcal N$ sufficiently large, and let $\bar c_\ell$ denote the canonical $2\ell$-cycle, i.e.~the cycle with variables $i$, constraints $a$ in $[\ell]$ and $i$-edges, $a$-edges in $\{1,2\}$ with labels ordered by first traversal, see e.g.~the left cycle in Figure~\ref{f_relative_position_representation_relative_positions}.
Analogous to the previous sections
we rewrite the expectation and count the number $|\mathcal E|$ of triplets $(g,c,x)\in\mathcal E$ such that $c$ is a $2\ell$-cycle and $x$ a solution in $g$, i.e.
\begin{flalign*}
\mathbb E[ZX_\ell]&=\frac{|\mathcal E|}{|\mathcal G|}
=\sum_{y\in\{0,1\}^\ell}\frac{e_1e_2e_3}{2\ell(dn)!}\textrm{, where}\\
e_1&=e_1(y)=\binom{n}{n_1}(n_1)_{r_1}(n-n_1)_{\ell-r_1}(d(d-1))^\ell\textrm{,}\\
e_2&=e_2(y)=\binom{k}{2}^m(m)_{\ell}2^{r_2}(2(k-2))^{2(r_1-r_2)}((k-2)(k-3))^{\ell-2r_1+r_2}\textrm{,}\\
e_3&=e_3(y)=(dn_1-2r_1)!(d(n-n_1)-2(\ell-r_1))!\textrm{,}
\end{flalign*}
and $r=r(y)=(r_1,r_2)$ is defined as follows.
For $y\in\{0,1\}^\ell$ we let $r_1=r_1(y)$ denote the number of ones in $y$.
Further, let $r_2=r_2(y)$ denote the number of constraints $b\in[\ell]$ in $\bar c_\ell$ such that both $b$-edges take the value one under the assignment $y$ of the variables $j\in[\ell]$ in $\bar c_\ell$. With $y$ fixed we can compute the number of suitable triplets $(g,c,x)$ as follows.
The denominator in the first line reflects $|\mathcal G|^{-1}$ and the compensation $2\ell$ as we will count directed cycles $\gamma$ in $g$.
The sum over $y\in\{0,1\}^\ell$ implements the choice of the assignment of the variables visited by $\gamma$ such that the variables $i_1,\dots,i_\ell$ traversed by $\gamma$ correspond to the variables $1,\dots,\ell$ in $\bar c_\ell$ in this order, i.e. $x_{i_1}=y_1,\dots,x_{i_\ell}=y_\ell$.
The first term in $e_1$ chooses the variables that take the value one under the solution $x$.
Then we choose the $r_1$ variables out of the $n_1$-variables that participate in the directed cycle $\gamma$ and take the value one consistent with $y$ (hence an ordered choice).
Analogously, we then choose the variables in $\gamma$ taking zero under $x$.
Finally, we choose the two $i$-edges traversed by $\gamma$ for each of the $\ell$ variables $i$ in the cycle.

The first term in $e_2$ is the usual choice of the two $a$-edges taking one under $x$ for each $a\in[m]$.
Then we choose the constraints visited by $\gamma$.
The remaining terms account for the ordered choice of the two $a$-edges that are traversed by $\gamma$ and that is consistent with the assignments $y$ and $x$ in the following sense.
The (already chosen) variables $i_1,\dots,i_\ell$ and constraints $a_1,\dots,a_\ell$ traversed by $\gamma$ correspond to the variables $1,\dots,\ell$ and constraints $1,\dots,\ell$ in $\bar c_\ell$ in this order respectively.
Further, the assignment of these variables is already fixed by $y$ and the $a$-edges taking the value one for each of these constraints are also fixed by our previous choice.
Hence, if $y_1=y_2=1$, then we have only two choices for the $a_1$-edge connecting to $i_1$, while the $a_1$-edge connecting to $i_2$ is fixed afterwards. For $y_1=1$ and $y_2=0$ we have two choices for the $a_1$-edge connecting to $i_1$ and $(k-2)$ choices for the $a_1$-edge connecting to $i_2$. The case $y_1=0$, $y_2=1$ is symmetric and we see that we have $(k-2)$ and $(k-3)$ choices for the remaining case $y_1=y_2=0$ analogously.
To derive the number of constraints for each of the cases above we recall that we have $r_1(y)$ ones in total and $r_2(y)$ ones whose successor is one (i.e.~the constraint $a$ between the two ones takes the value one on both $a$-edges, and where the successor of $y_\ell$ is $y_1$). But then $(r_1-r_2)$ ones in $y$ do not have the successor one, i.e.~they have the successor zero. Complementarily we see that since $r_2$ ones are succeeded by a one there are $r_2$ ones that are preceeded by a one, hence there are $(r_1-r_2)$ ones that are preceeded by zero.
Then again, this means that there are $(r_1-r_2)$ zeros that are succeeded by a one, hence the remaining $(\ell-2r_1+r_2)$ zeros out of the $(\ell-r_1)$ zeros are succeeded by a zero.
This fixes $\gamma$, so in particular $2r_1$ v-edges that take the value one and $2(\ell-r_1)$ v-edges that take the value zero.
The two terms in $e_3$ then wire the remaining edges.

We divide by $\mathbb E[Z]$ to match the left hand side of Theorem
\ref{small_subgraph_conditioning}\ref{small_subgraph_conditioning_joint_dist}, i.e.
\begin{flalign*}
\frac{\mathbb E[ZX_\ell]}{\mathbb E[Z]}
&=\sum_{y\in\{0,1\}^\ell}\frac{e_1e_2e_3}{2\ell(2m)!(dn-2m)!}\textrm{, where}\\
e_1&=e_1(y)=(n_1)_{r_1}(n-n_1)_{\ell-r_1}(d(d-1))^\ell\textrm{,}\\
e_2&=e_2(y)=(m)_{\ell}2^{r_2}(2(k-2))^{2(r_1-r_2)}((k-2)(k-3))^{\ell-2r_1+r_2}\textrm{,}\\
e_3&=e_3(y)=(dn_1-2r_1)!(d(n-n_1)-2(\ell-r_1))!\textrm{,}
\end{flalign*}
and using Stirling's formula we easily derive that
\begin{flalign*}
\frac{\mathbb E[ZX_\ell]}{\mathbb E[Z]}&\sim\lambda_\ell\sum_{y\in\{0,1\}^\ell}
M_{11}^{r_2}M_{01}^{r_1-r_2}M_{10}^{r_1-r_2}M_{00}^{\ell-2r_1+r_2}
=\lambda_\ell(1+\delta_\ell)\textrm{, }~
M=\begin{pmatrix}
1-\frac{2}{k-1} & 1-\frac{1}{k-1}\\
\frac{2}{k-1} & \frac{1}{k-1}
\end{pmatrix}\textrm{.}
\end{flalign*}
The matrix $M$ has a nice interpretation as a (column stochastic) transition probability matrix in a two state Markov process, with
\begin{flalign*}
1+\delta_\ell
=\sum_{\substack{y\in\{0,1\}^\ell\\y_1=0}}M_{11}^{r_2}M_{01}^{r_1-r_2}M_{10}^{r_1-r_2}M_{00}^{\ell-2r_1+r_2}
+\sum_{\substack{y\in\{0,1\}^\ell\\y_1=1}}M_{11}^{r_2}M_{01}^{r_1-r_2}M_{10}^{r_1-r_2}M_{00}^{\ell-2r_1+r_2}
\end{flalign*}
reflecting the probabilities that we return to the starting point given that the starting point is zero and one respectively.
Let us consider the first partial sum restricted to sequences $y$ (of Markov states) such that $y_1=0$, i.e. we start in the state zero.
Then $M_{0y_2}$ reflects the probability that we move from the initial state zero to the state $y_2$ given that we are in state zero (which is the case because we know that $y_1=0$).
As discussed above we will move from a one to a one in $y$ exactly $r_2$ times, from a one to a zero $(r_1-r_2)$ times, from a zero to a one $(r_1-r_2)$ times and from a zero to a zero $(\ell-2r_1+r_2)$ times. Hence the contribution to the first partial sum for given $y$ exactly reflects the probability that we start in the state zero and (with this given) return to the state zero after $\ell$ steps (since the successor of $y_\ell$ is $y_1=0$). Since we sum over all such sequences $y$ the first sum reflects the probability that we reach state zero after $\ell$ steps given that we start in the state zero. The discussion of the second sum is completely analogous. This directly yields
\begin{flalign*}
1+\delta_\ell=(M^\ell)_{00}+(M^\ell)_{11}=\Tr(M^\ell)=\lambda'_1+\lambda'_2=\lambda_1^\ell+\lambda_2^\ell\textrm{, }
\quad
\lambda_1=1\textrm{, }
\lambda_2=-\frac{1}{k-1}\textrm{,}
\end{flalign*}
where we used the Kolmogorov-Chapman equalities in the first step, i.e.~that the $\ell$-step transition probability matrix is the $\ell$-th power of the one step transition probability matrix, which allow to translate the first sum into the transition probability $(M^\ell)_{00}$ that we reach the state zero after $\ell$ steps given that we start in the state zero and analogously for the second sum.
In the second step we use the definition of the trace, while in the third step we use that the trace is the sum of the eigenvalues $\lambda'_1$, $\lambda'_2$ of $M^\ell$. In the next step we use that the eigenvalues $\lambda'_1$ $\lambda'_2$ of the $\ell$-th power $M^\ell$ of the matrix $M$ are the $\ell$-th powers of the eigenvalues $\lambda_1$, $\lambda_2$ of $M$.
In particular this also yields that $\delta_\ell>-1$ for all $k>3$ and establishes $\delta_\ell=(1-k)^{-\ell}$.

Following the strategy of Section \ref{a_number_of_cycles} we turn to the case of disjoint cycles. Similarly, the present case is a canonical extension of the single cycle case discussed above.
We fix $\bar \ell\in\mathbb Z_{>0}$, $r\in\mathbb Z_{\ge 0}^{\bar \ell}$ and $n\in\mathcal N$ sufficiently large.
Further, as in the previous sections we
rewrite the expectation and count the number $|\mathcal E|$ of triplets $(g,c,x)\in\mathcal E$ such that
$c=(c_s)_{s\in[\bar r]}$ is a sequence of $\bar r=\sum_{\ell\in[\bar \ell]}r_\ell$ distinct $2\ell_s$-cycles $c_s$ in the configuration $g$ sorted by their length $\ell_s$ in ascending order (as described in Section \ref{a_number_of_cycles}) and $x$ is a solution of $g$.
This yields
\begin{flalign*}
\mathbb E\left[Z\prod_{\ell\in[\bar \ell]}(X_\ell)_{r_\ell}\right]=\frac{|\mathcal E|}{|\mathcal G|}
=\frac{|\mathcal E_0|}{|\mathcal G|}+\frac{|\mathcal E_1|}{|\mathcal G|}\textrm{,}
\end{flalign*}
where $\mathcal E_0\subseteq\mathcal E$ is the set over all triplets $(g,c,x)\in\mathcal E$ involving sequences $c$ of disjoint cycles and $\mathcal E_1=\mathcal E\setminus\mathcal E_0$.
We begin with the first contribution, which can be regarded as a combination of the discussion of disjoint cycles in Section \ref{a_number_of_cycles} and the single cycle case above, i.e.
\begin{flalign*}
\frac{|\mathcal E_0|}{|\mathcal G|}&=\sum_{y\in\{0,1\}^{\mathfrak l}}\frac{e_1e_2e_3}{(dn)!\prod_{s\in[\bar r]}(2\ell_s)}\textrm{,}\\
e_1&=e_1(y)=\binom{n}{n_1}(n_1)_{\mathfrak r_1}(n-n_1)_{\mathfrak l-\mathfrak r_1}(d(d-1))^{\mathfrak l}\textrm{,}\\
e_2&=e_2(y)=\binom{k}{2}^m(m)_{\mathfrak l}2^{\mathfrak r_2}(2(k-2))^{2(\mathfrak r_1-\mathfrak r_2)}((k-2)(k-3))^{\mathfrak l-2\mathfrak r_1+\mathfrak r_2}\textrm{,}\\
e_3&=e_3(y)=(dn_1-2\mathfrak r_1)!(d(n-n_1)-2(\mathfrak l-\mathfrak r_1))!\textrm{,}\\
\mathfrak l&=\sum_{s\in[\bar r]}\ell_s\textrm{, }
\mathfrak r_i=\sum_{s\in[\bar r]}r_i(y_s)\textrm{, }i\in[2]\textrm{,}
\end{flalign*}
where $y=(y_s)_{s\in[\bar r]}$ is the subdivision of $y$ corresponding to the definition of $c$, and $r_1$, $r_2$ are the notions defined above.
The combinatorial arguments are now fairly self-explanatory, e.g.~we make an ordered choice of the $r_1(y_1)$ variables taking one for $\gamma_1$, then an ordered choice of $r_1(y_2)$ variables taking one for $\gamma_2$ out of the remaining $n_1-r_1(y_1)$ variables taking one and so on.

The asymptotics are also completely analogous to the single cycle case and Section \ref{a_number_of_cycles}.
First, we notice that the sum is still bounded, i.e. we can also use the asymptotic equivalences for the corresponding ratio here.
Then, the sum can be decomposed into the product of the $\bar r$ factors that correspond to the single cycle case above, analogously to Section \ref{a_number_of_cycles}, which yields
\begin{flalign*}
\frac{|\mathcal E_0|}{|\mathcal G|\mathbb E[Z]}\sim\prod_{\ell\in[\bar \ell]}\lambda_\ell^{r_\ell}(1+\delta_\ell)^{r_\ell}\textrm{.}
\end{flalign*}
Now we turn to the proof that the second contribution involving $\mathcal E_1$ is negligible, which is a combination of the above and the discussion of intersecting cycles in Section \ref{a_number_of_cycles}.
We let
\begin{flalign*}
\mathcal E_2&=\{(g,\gamma,x):(g,c(\gamma),x)\in\mathcal E_1\}\textrm{, }
\mathcal R=\{\rho(\gamma):(g,\gamma,x)\in\mathcal E_2\}\textrm{ and }\\
\mathcal E_{\rho}&=\{(g,\gamma,x)\in\mathcal E_2:\rho(\gamma)=\rho\}\textrm{ for }\rho\in\mathcal R
\end{flalign*}
denote the sets that match the corresponding sets in Section \ref{a_number_of_cycles}.
For relative positions $\rho\in\mathcal R$ we consider an assignment $y\in\{0,1\}^{n(\rho)}$ of the variables $V=[n(\rho)]$ in the corresponding union of cycles $c=c(\rho)$ and
let
\begin{flalign*}
\mathfrak r_1&=\mathfrak r_1(\rho,y)=|\{j\in V:y_j=1\}|\textrm{,}\\
\mathfrak o(b)&=\mathfrak o_{\rho,y}(b)=|\{h\in[k_b(\rho)]:y_{i_c(b,h)}=1\}|\textrm{ for }b\in[m(\rho)]\textrm{ and}\\
\mathfrak o&=\mathfrak o(\rho,y)=\sum_{b\in[m(\rho)]}\mathfrak o(b)
\end{flalign*}
denote the number of variables $j\in V$ in $c$ that take the value one under $y$,
the number of $b$-edges for a constraint $b\in[m(\rho)]$ in $c$ that take the value one under $y$ and
the number of f-edges in $c$ that take the value one under $y$ respectively.
Since $c$ is a configuration the number of v-edges in $c$ that take the value one under $y$ is also $\mathfrak o$.
We are particularly interested in the assignments
\begin{flalign*}
y\in\mathcal Y=\mathcal Y(\rho)=\{z\in\{0,1\}^{n(\rho)}:\forall b\in[m(\rho)]\mathfrak o(b)\in[2+k_b-k,2]\}
\end{flalign*}
that do not directly violate a constraint $b\in[m(\rho)]$ in $c(\rho)$ in the sense that $\mathfrak o(b)\le 2$ and also do not indirectly violate $b$ in that $2-\mathfrak o(b)\le k-k_b$, i.e.~there are sufficiently many $b$-edges left to take the remaining $(2-\mathfrak o(b))$ ones.
With this slight extension of our machinery we can derive
\begin{flalign*}
\frac{|\mathcal E_1|}{|\mathcal G|}
&=\sum_{\rho\in\mathcal R}\frac{|\mathcal E_{\rho}|}{(dn)!\prod_{s\in[\mathfrak r]}(2\ell_s)}\textrm{, }
|\mathcal E_{\rho}|
=\sum_{y\in\mathcal Y}e_1e_2e_3\textrm{, }\\
e_1&=e_1(\rho,y)
  =\binom{n}{n_1}(n_1)_{\mathfrak r_1}(n-n_1)_{n(\rho)-\mathfrak r_1}\prod_{j\in[n(\rho)]}(d)_{d_j(\rho)}\textrm{,}\\
e_2&=e_2(\rho,y)=\binom{k}{2}^m(m)_{m(\rho)}\prod_{b\in[m(\rho)]}(2_{\mathfrak o(b)}(k-2)_{k_b(\rho)-\mathfrak o(b)})
\textrm{,}\\
e_3&=e_3(\rho,y)=(dn_1-\mathfrak o)!(d(n-n_1)-(e(\rho)-\mathfrak o))!\textrm{,}
\end{flalign*}
for the following reasons.
With $\rho\in\mathcal R$ and $y\in\mathcal Y(\rho)$ fixed we choose the $n_1$ variables out of the $n$ variables in the configuration $g$ that should take the value one under $x$. Out of these $n_1$ variables we choose the $\mathfrak r_1$ variables (ordered by first traversal) that take the value one in the directed cycles $\gamma$ under $x$, corresponding to the $\mathfrak r_1$ variables in $\rho$ that take one under $y$ (more precisely we choose the values $i\in[n]$ of the absolute values $\alpha_v$ for the $\mathfrak r_1$ variables $j\in[n(\rho)]$ in $\rho$ that take the value one under $y$) and analogously for the variables that take zero.
Then, for each variable $j\in[n(\rho)]$ in $\rho$ and corresponding variable $i=\alpha_v(j)$ in $\gamma$ we choose the $i$-edges that participate in $\gamma$ (meaning that we choose $\alpha_{v,j}$).
On the constraint side we first choose the two $a$-edges that take the value one under $x$ in $g$ for each $a\in[m]$. Then we select the $m(\rho)$ constraints that participate in $\gamma$ (i.e.~we fix $\alpha_f$).
Further, for each constraint $b\in[m(\rho)]$ in $\rho$ and its corresponding constraint $a=\alpha_f(b)$ in $\gamma$ we choose the $\mathfrak o(b)$ $a$-edges that take the value one in $\gamma$ under $x$ consistent with $\rho$ and $y$ out of the two $a$-edges that take the value one in $g$ under $x$ and analogously for the $a$-edges that take the value zero (which means that we fix $\alpha_{f,b}$ for $b\in[m(\rho)]$ consistent with the choice of $y$ and the choice of the two $a$-edges that take the value one for each $a\in[m]$). This fixes the sequence of the directed cycles (i.e.~the isomorphism $\alpha$ and further $\gamma$). The remaining terms wire the $(dn_1-\mathfrak o)$ remaining v-edges that take the value one and the v-edges taking zero respectively.

As opposed to the rather demanding combinatorial part the asymptotics are still easy to derive since both sums are bounded, so the procedure analogous to Section \ref{a_number_of_cycles} yields
\begin{flalign*}
\frac{|\mathcal E_1|}{|\mathcal G|\mathbb E[Z]}
&\sim\sum_{\rho\in\mathcal R}\sum_{y\in\mathcal Y}c_1(\rho,y)n^{n(\rho)+m(\rho)-e(\rho)}\textrm{,}
\end{flalign*}
where $c_1(\rho,y)$ is a constant compensating the bounded terms.
The right hand side tends to zero by the argumentation in Section \ref{a_number_of_cycles}, so this contribution is indeed negligible.
This shows that $\frac{|\mathcal E|}{|\mathcal G|}\sim\frac{|\mathcal E_0|}{|\mathcal G|}$ and thereby establishes Theorem
\ref{small_subgraph_conditioning} (\ref{small_subgraph_conditioning_joint_dist}).

With $d\in[1,d^*)\subseteq[1,k)$ as discussed in Lemma \ref{m_optimization_below_inflection} and Lemma \ref{m_optimization_d_star_le_d_max}, $\lambda_\ell$ as derived in Lemma \ref{lem_number_of_cycles}, $\delta_\ell=(1-k)^{-\ell}$, the asymptotics of the second moment discussed in Lemma \ref{p_second_moment_results} and the Taylor series
$\ln(1-x)= - \sum_{\ell \ge 1} {x^\ell}/{\ell}, x\in(0,1)$,
we establish Theorem \ref{small_subgraph_conditioning} (\ref{small_subgraph_conditioning_variance_explained}) by applying our results to the sum
\begin{flalign*}
\sum_{\ell \ge 1}\lambda_\ell\delta_\ell^2=\sum_{\ell \ge 1}\frac{1}{2\ell}\left(\frac{d-1}{k-1}\right)^\ell
=-\frac{1}{2}\ln\left(1-\frac{d-1}{k-1}\right)
=\ln\left(\sqrt{\frac{k-1}{k-d}}\right)\textrm{.}
\end{flalign*}
This concludes the proof of Theorem \ref{small_subgraph_conditioning} and further the proof of Theorem \ref{theorem_satisfiability_threshold}.
\bibliographystyle{plain}
\bibliography{literature}

\begin{thebibliography}{10}

\bibitem{abbe2018}
E.~Abbe.
\newblock Community detection and stochastic block models: Recent developments.
\newblock {\em Journal of Machine Learning Research}, 18:1--86, 2018.

\bibitem{bahmanian2017}
M.~A. Bahmanian and M.~\v{S}ajna.
\newblock Quasi-{E}ulerian hypergraphs.
\newblock {\em Electron. J. Combin.}, 24(3):Paper 3.30, 12, 2017.

\bibitem{bapst2017}
V.~Bapst, A.~Coja-Oghlan, and C.~Efthymiou.
\newblock Planting colourings silently.
\newblock {\em Combin. Probab. Comput.}, 26(3):338--366, 2017.

\bibitem{bollobas1980}
B.~Bollob\'{a}s.
\newblock A probabilistic proof of an asymptotic formula for the number of
  labelled regular graphs.
\newblock {\em European J. Combin.}, 1(4):311--316, 1980.

\bibitem{braunstein2005}
A.~Braunstein, M.~M\'{e}zard, and R.~Zecchina.
\newblock Survey propagation: an algorithm for satisfiability.
\newblock {\em Random Structures Algorithms}, 27(2):201--226, 2005.

\bibitem{castellani2003}
T.~Castellani, V.~Napolano, F.~Ricci-Tersenghi, and R.~Zecchina.
\newblock Bicolouring random hypergraphs.
\newblock {\em J. Phys. A}, 36(43):11037--11053, 2003.

\bibitem{coja2016a}
A.~Coja-Oghlan.
\newblock Constraint satisfaction: random regular {$k$}-{SAT}.
\newblock In {\em Statistical physics, optimization, inference, and
  message-passing algorithms}, pages 231--251. Oxford Univ. Press, Oxford,
  2016.

\bibitem{coja2018}
A.~Coja-Oghlan, C.~Efthymiou, N.~Jaafari, M.~Kang, and T.~Kapetanopoulos.
\newblock Charting the replica symmetric phase.
\newblock {\em Comm. Math. Phys.}, 359(2):603--698, 2018.

\bibitem{coja2018a}
A.~Coja-Oghlan, F.~Krzakala, W.~Perkins, and L.~Zdeborova.
\newblock Information-theoretic thresholds from the cavity method.
\newblock {\em Adv. Math.}, 333:694--795, 2018.

\bibitem{coja2016}
A.~Coja-Oghlan and K.~Panagiotou.
\newblock The asymptotic {$k$}-{SAT} threshold.
\newblock {\em Adv. Math.}, 288:985--1068, 2016.

\bibitem{cooper1996}
C.~Cooper, A.~Frieze, M.~Molloy, and B.~Reed.
\newblock Perfect matchings in random {$r$}-regular, {$s$}-uniform hypergraphs.
\newblock {\em Combin. Probab. Comput.}, 5(1):1--14, 1996.

\bibitem{asta2008}
L.~Dall'Asta, A.~Ramezanpour, and R.~Zecchina.
\newblock Entropy landscape and non-{G}ibbs solutions in constraint
  satisfaction problems.
\newblock {\em Phys. Rev. E (3)}, 77(3):031118, 16, 2008.

\bibitem{ding2015}
J.~Ding, A.~Sly, and N.~Sun.
\newblock Proof of the satisfiability conjecture for large {$k$}.
\newblock In {\em S{TOC}'15---{P}roceedings of the 2015 {ACM} {S}ymposium on
  {T}heory of {C}omputing}, pages 59--68. ACM, New York, 2015.

\bibitem{ding2016}
J.~Ding, A.~Sly, and N.~Sun.
\newblock Maximum independent sets on random regular graphs.
\newblock {\em Acta Math.}, 217(2):263--340, 2016.

\bibitem{ding2016a}
J.~Ding, A.~Sly, and N.~Sun.
\newblock Satisfiability threshold for random regular {NAE}-{SAT}.
\newblock {\em Comm. Math. Phys.}, 341(2):435--489, 2016.

\bibitem{erdos1960}
P.~Erd\H{o}s and A.~R\'{e}nyi.
\newblock On the evolution of random graphs.
\newblock {\em Magyar Tud. Akad. Mat. Kutat\'{o} Int. K\"{o}zl.}, 5:17--61,
  1960.

\bibitem{erdos1966}
P.~Erd\H{o}s and A.~R\'{e}nyi.
\newblock On the existence of a factor of degree one of a connected random
  graph.
\newblock {\em Acta Math. Acad. Sci. Hungar.}, 17:359--368, 1966.

\bibitem{feige02}
U.~Feige.
\newblock Relations between average case complexity and approximation
  complexity.
\newblock In {\em Proceedings of the Thiry-fourth Annual ACM Symposium on
  Theory of Computing}, STOC '02, pages 534--543, New York, NY, USA, 2002. ACM.

\bibitem{frieze1996}
A.~Frieze, M.~Jerrum, M.~Molloy, R.~W. Robinson, and N.~C. Wormald.
\newblock Generating and counting {H}amilton cycles in random regular graphs.
\newblock {\em J. Algorithms}, 21(1):176--198, 1996.

\bibitem{gabrie2017}
M.~Gabri\'{e}, V.~Dani, G.~Semerjian, and L.~Zdeborov\'{a}.
\newblock Phase transitions in the {$q$}-coloring of random hypergraphs.
\newblock {\em J. Phys. A}, 50(50):505002, 44, 2017.

\bibitem{galanis14}
A.~Galanis, D.~\v{S}tefankovi\v{c}, and E.~Vigoda.
\newblock Inapproximability for antiferromagnetic spin systems in the tree
  non-uniqueness region.
\newblock In {\em Proceedings of the Forty-sixth Annual ACM Symposium on Theory
  of Computing}, STOC '14, pages 823--831, New York, NY, USA, 2014.

\bibitem{higuchi2010}
S.~Higuchi and M.~M\'ezard.
\newblock Correlation-based decimation in constraint satisfaction problems.
\newblock {\em Journal of Physics: Conference Series}, 233(1):012003, 2010.

\bibitem{janson1995}
S.~Janson.
\newblock Random regular graphs: asymptotic distributions and contiguity.
\newblock {\em Combin. Probab. Comput.}, 4(4):369--405, 1995.

\bibitem{janson2000}
S.~Janson, T.~{\L}uczak, and A.~Rucinski.
\newblock {\em Random graphs}.
\newblock Wiley-Interscience Series in Discrete Mathematics and Optimization.
  Wiley-Interscience, New York, 2000.

\bibitem{kahn2022}
J.~Kahn.
\newblock Hitting times for {S}hamir's problem.
\newblock {\em Trans. Amer. Math. Soc.}, 375(1):627--668, 2022.

\bibitem{moore2008}
V.~Kalapala and C.~Moore.
\newblock The phase transition in exact cover.
\newblock {\em Chic. J. Theoret. Comput. Sci.}, pages Article 5, 9, 2008.

\bibitem{kemkes2010}
G.~Kemkes, X.~P\'{e}rez-Gim\'{e}nez, and N.~C. Wormald.
\newblock On the chromatic number of random {$d$}-regular graphs.
\newblock {\em Adv. Math.}, 223(1):300--328, 2010.

\bibitem{krzakala2016}
F.~Krzakala, F.~Ricci-Tersenghi, L.~Zdeborov\'{a}, R.~Zecchina, E.~W. Tramel,
  and L.~F. Cugliandolo, editors.
\newblock {\em Statistical physics, optimization, inference, and
  message-passing algorithms}.
\newblock Oxford University Press, Oxford, 2016.

\bibitem{kudekar13}
S.~Kudekar, T.~Richardson, and R.~Urbanke.
\newblock Spatially coupled ensembles universally achieve capacity under belief
  propagation.
\newblock {\em IEEE Trans. Inform. Theory}, 59:7761--7813, 2013.

\bibitem{maneva2007}
E.~Maneva, E.~Mossel, and M.~J. Wainwright.
\newblock A new look at survey propagation and its generalizations.
\newblock {\em J. ACM}, 54(4):Art. 17, 41, 2007.

\bibitem{mezard2009}
M.~M\'{e}zard and A.~Montanari.
\newblock {\em Information, physics, and computation}.
\newblock Oxford Graduate Texts. Oxford University Press, Oxford, 2009.

\bibitem{mezard1987}
M.~M\'{e}zard, G.~Parisi, and M.~A. Virasoro.
\newblock {\em Spin glass theory and beyond}, volume~9 of {\em World Scientific
  Lecture Notes in Physics}.
\newblock World Scientific Publishing Co., Inc., Teaneck, NJ, 1987.

\bibitem{molloy1997}
M.~Molloy, H.~D. Robalewska, R.~W. Robinson, and N.~C. Wormald.
\newblock {$1$}-factorizations of random regular graphs.
\newblock {\em Random Structures Algorithms}, 10(3):305--321, 1997.

\bibitem{moore2016}
C.~Moore.
\newblock The phase transition in random regular exact cover.
\newblock {\em Ann. Inst. Henri Poincar\'{e} D}, 3(3):349--362, 2016.

\bibitem{mora2007}
T.~Mora.
\newblock {\em {Geometry and Inference in Optimization and in Information
  Theory}}.
\newblock Theses, {Universit{\'e} Paris Sud - Paris XI}, September 2007.

\bibitem{panagiotou2019}
K.~Panagiotou and M.~Pasch.
\newblock {Satisfiability Thresholds for Regular Occupation Problems}.
\newblock In {\em 46th Int.~Col.~on Automata, Languages, and Programming (ICALP
  '19)}, volume 132 of {\em Leibniz International Proceedings in Informatics
  (LIPIcs)}, pages 90:1--90:14, 2019.

\bibitem{robalewska1996}
H.~D. Robalewska.
\newblock {$2$}-factors in random regular graphs.
\newblock {\em J. Graph Theory}, 23(3):215--224, 1996.

\bibitem{robbins1955}
H.~Robbins.
\newblock A remark on {S}tirling's formula.
\newblock {\em Amer. Math. Monthly}, 62:26--29, 1955.

\bibitem{schmidt2016}
C.~Schmidt, N.~Guenther, and L.~Zdeborov\'{a}.
\newblock Circular coloring of random graphs: statistical physics
  investigation.
\newblock {\em J. Stat. Mech. Theory Exp.}, 2016(8):083303, 28, 2016.

\bibitem{talagrand2003}
M.~Talagrand.
\newblock {\em Spin glasses: a challenge for mathematicians}, volume~46 of {\em
  Results in Mathematics and Related Areas. 3rd Series. A Series of Modern
  Surveys in Mathematics}.
\newblock Springer-Verlag, Berlin, 2003.

\bibitem{yedidia2001}
J.~S. Yedidia, W.~T. Freeman, and Y.~Weiss.
\newblock Bethe free energy, kikuchi approximations, and belief propagation
  algorithms.
\newblock Technical Report TR2001-16, MERL - Mitsubishi Electric Research
  Laboratories, Cambridge, MA 02139, May 2001.

\bibitem{zdeborova2011}
L.~Zdeborov\'{a} and F.~Krzakala.
\newblock Quiet planting in the locked constraint satisfaction problems.
\newblock {\em SIAM J. Discrete Math.}, 25(2):750--770, 2011.

\bibitem{zdeborova2008a}
L.~Zdeborov\'a and M.~M\'ezard.
\newblock Constraint satisfaction problems with isolated solutions are hard.
\newblock {\em J. Stat. Mech. Theory Exp.}, 2008(12):P12004, 2008.

\end{thebibliography}
\appendix

%
%
\section{Proof of Lemma \ref{lem_number_of_cycles}}\label{a_number_of_cycles}
We present the proof of Lemma \ref{lem_number_of_cycles} in detail so as to facilitate the presentation of the small subgraph conditioning method in Section \ref{s_small_subgraph_conditioning}.
Lemma \ref{lem_number_of_cycles} can be shown by a direct application of the method of moments, which is discussed, for example, in \cite{janson2000} (Theorem 6.10).
\begin{theorem}[Method of Moments]
\label{method_of_moments}
Let $\bar\ell\in\mathbb Z_{>0}$ and $((X_{\ell,i})_{\ell\in[\bar\ell]})_{i\in\mathbb Z_{>0}}$ be a sequence of a vector of random variables.
If $\lambda\in\mathbb R_{\ge 0}^{\bar\ell}$ is such that, as $i\rightarrow\infty$,
\begin{flalign*}
\mathbb E\left[\prod_{\ell=1}^{\bar\ell}(X_{\ell,i})_{r_\ell}\right]\rightarrow\prod_{\ell=1}^{\bar\ell}\lambda_\ell^{r_\ell}
\end{flalign*}
for every $r\in\mathbb Z_{\ge 0}^{\bar\ell}$, then $(X_{\ell,i})_{\ell\in[\bar\ell]}$ converges in distribution to $(Z_\ell)_{\ell\in[\bar\ell]}$, where
the $Z_\ell\sim\Po(\lambda_\ell)$ are independent Poisson distributed random variables.
\end{theorem}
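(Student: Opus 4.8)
\emph{Proof plan.} The plan is to prove this as the classical \emph{method of moments} for Poisson limits, first reducing it to a one-dimensional moment problem and then resolving that problem through moment determinacy.

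\emph{Reduction to ordinary moments and to one variable.} First recall that $Z\sim\Po(\lambda)$ satisfies $\mathbb E[(Z)_r]=\lambda^{r}$ for all $r\in\mathbb Z_{\ge 0}$, and that independence of the $Z_\ell$ makes the joint factorial moments factorise, $\mathbb E[\prod_{\ell\in[\bar\ell]}(Z_\ell)_{r_\ell}]=\prod_{\ell\in[\bar\ell]}\lambda_\ell^{r_\ell}$. Thus the hypothesis is exactly the statement that every joint factorial moment of $(X_{\ell,i})_{\ell\in[\bar\ell]}$ converges, as $i\to\infty$, to the corresponding joint factorial moment of the limit vector $(Z_\ell)_{\ell\in[\bar\ell]}$. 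I would then pass to ordinary moments by means of the identity $x^{n}=\sum_{j\le n}S(n,j)\,(x)_j$, where $S(n,j)$ denotes a Stirling number of the second kind: each mixed power moment $\mathbb E[\prod_\ell X_{\ell,i}^{n_\ell}]$ is a fixed finite linear combination of joint factorial moments, so taking $i\to\infty$ inside that finite sum gives $\mathbb E[\prod_\ell X_{\ell,i}^{n_\ell}]\to\prod_\ell\mathbb E[Z_\ell^{n_\ell}]$ for every $n\in\mathbb Z_{\ge 0}^{\bar\ell}$. In particular, for each fixed $t\in\mathbb R^{\bar\ell}$ all power moments of the real random variable $\sum_\ell t_\ell X_{\ell,i}$ converge to the corresponding moments of $\sum_\ell t_\ell Z_\ell$.

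\emph{Moment determinacy and conclusion.} The power moments of $\Po(\lambda)$ are the Touchard polynomials $\sum_j S(n,j)\lambda^{j}$, which are bounded by $C^{n}n^{n}$ for a suitable $C=C(\lambda)$; hence a linear combination $\sum_\ell t_\ell Z_\ell$ has even moments $m_{2n}$ with $m_{2n}^{1/(2n)}=O(n)$, so that $\sum_n m_{2n}^{-1/(2n)}=\infty$, i.e.\ Carleman's condition holds and the law of $\sum_\ell t_\ell Z_\ell$ is determined by its moments. The univariate Fr\'echet--Shohat theorem then yields $\sum_\ell t_\ell X_{\ell,i}\Rightarrow\sum_\ell t_\ell Z_\ell$ for every $t\in\mathbb R^{\bar\ell}$, and the Cram\'er--Wold device upgrades this to $(X_{\ell,i})_{\ell\in[\bar\ell]}\Rightarrow(Z_\ell)_{\ell\in[\bar\ell]}$. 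Since the limit is supported on the lattice $\mathbb Z_{\ge 0}^{\bar\ell}$, weak convergence here is equivalent to the pointwise convergence $\mathbb P[(X_{\ell,i})_{\ell}=x]\to\prod_\ell e^{-\lambda_\ell}\lambda_\ell^{x_\ell}/x_\ell!$, which is the claim.

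\emph{Main obstacle, and a self-contained alternative.} The delicate point is precisely that convergence of all moments does not by itself force convergence in distribution; one must know the limiting law is moment-determinate, which here is supplied by Carleman's condition for the Poisson and its linear images. To avoid moment determinacy altogether one can argue directly by inclusion-exclusion: writing $b^{(i)}_{j}=\mathbb E[\prod_\ell(X_{\ell,i})_{j_\ell}]/\prod_\ell j_\ell!$, M\"obius inversion of the binomial transform applied in each coordinate gives $\mathbb P[(X_{\ell,i})_{\ell}=x]=\sum_{j\ge x}\prod_\ell(-1)^{j_\ell-x_\ell}\binom{j_\ell}{x_\ell}\,b^{(i)}_{j}$, and truncating this series coordinatewise produces, via the Bonferroni sieve inequalities, alternating upper and lower bounds that are valid for \emph{every} $i$. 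Letting $i\to\infty$ in a fixed truncation replaces $b^{(i)}_{j}$ by $\prod_\ell\lambda_\ell^{j_\ell}/j_\ell!$, and then letting the truncation level tend to infinity makes both bounds converge to $\prod_\ell e^{-\lambda_\ell}\lambda_\ell^{x_\ell}/x_\ell!$ because the Poisson series converges absolutely; the squeeze then gives the claim. In this route the only genuine work is verifying the multivariate Bonferroni bracketing, which follows by iterating the one-dimensional sieve bound over the $\bar\ell$ coordinates.
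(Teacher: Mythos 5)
Your proof is essentially correct, but note that the paper does not prove this statement at all: Theorem~\ref{method_of_moments} is imported verbatim as Theorem~6.10 of Janson, \L{}uczak and Ruci\'nski \cite{janson2000} and used as a black box, so there is no in-paper argument to compare against. Judged on its own, your write-up is a valid reconstruction of the classical result, and your second, sieve-based route (binomial inversion of the factorial moments $b^{(i)}_j=\mathbb E[\prod_\ell\binom{X_{\ell,i}}{j_\ell}]$ plus the alternating Bonferroni bounds, then a double limit) is essentially the textbook proof and is preferable here because it bypasses moment determinacy entirely and directly produces convergence of the point probabilities. The Carleman--Fr\'echet--Shohat--Cram\'er--Wold route also works, and your verification of Carleman's condition for linear combinations of independent Poissons is fine. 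Two small caveats, both inherited from the paper's loose statement rather than from your argument: the theorem as printed omits the hypothesis that the $X_{\ell,i}$ are non-negative integer-valued, which you in fact need --- to pass from factorial to power moments with control of signs, to apply the sieve, and to upgrade weak convergence to pointwise convergence of the probability mass function (a singleton $\{x\}$ is not a continuity set of the Poisson limit, so one must use intervals $(x-\tfrac12,x+\tfrac12)$ and lattice-valuedness of the $X_{\ell,i}$). In the paper's application the $X_\ell$ are cycle counts, so this is harmless, but you should state the integrality assumption explicitly if you intend the proof to stand alone.
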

First, we notice that $G=G_{k,d,n,m}$ and further $X_\ell=X_{k,d,n,\ell}$ is only defined for $m=dn/k\in\mathbb Z$
as stated in Lemma \ref{lem_satthresh_basics}, hence Lemma \ref{lem_number_of_cycles} only applies to such sequences of configurations.

Fix $k$, $d\in\mathbb Z_{>1}$.
Before we turn to the general case we consider the $\mathbb E[X_\ell]$ for $\ell\in\mathbb Z_{>0}$.
For this purpose let $n$ and $m(n)$ be sufficiently large. Let $\mathcal C_{\ell,g}$ be the set of all $2\ell$-cycles in $g\in\mathcal G$. Then  
\begin{flalign*}
\mathbb E[X_\ell]&=\sum_{g\in\mathcal G}\frac{X_\ell(g)}{|\mathcal G|}
=|\mathcal G|^{-1}\sum_{g\in\mathcal G}|\mathcal C_{\ell,g}|
=\frac{|\mathcal E|}{|\mathcal G|}~\textrm{, where }~ \mathcal E=\{(g,c):g\in\mathcal G,c\in\mathcal C_{\ell,g}\}.
\end{flalign*}
With this at hand we obtain that 
\begin{flalign*}
\mathbb E[X_\ell]=\frac{1}{2\ell(dn)!}(n)_\ell(m)_\ell(d(d-1))^\ell(k(k-1))^\ell(dn-2\ell)!
\end{flalign*}
using the following combinatorial arguments.
Instead of counting pairs $(g,c)$ of configurations $g$ and $2\ell$-cycles $c\in\mathcal C_{\ell,g}$ we count pairs $(g,\gamma)$ of configurations $g$ and \emph{directed} $2\ell$-cycles $\gamma$ (based at a variable node) in $g$.
There are exactly $2\ell$ directed cycles $\gamma$ corresponding to each (undirected) cycle $c$ of length $2\ell$ since we can choose the base from the $\ell$ variables in $c$ and $\gamma$ is then determined by one of the two possible directions.
The denominator reflects the compensation for this counting next to the probability $|\mathcal G|^{-1}$.
Further, the term $(n)_\ell$ reflects the ordered choice of the variables for the directed cycle, as does $(m)_\ell$ for the constraints.
The next two terms account for the choice of the two $i$-edges and $a$-edges traversed by the cycle for each of the $\ell$ variables $i$ and constraints $a$. This fixes the directed cycle $\gamma$ and further the corresponding undirected cycle $c(\gamma)$.
In particular, the $2\ell$ edges of the cycle $c$ in $g$ are fixed, i.e.~the corresponding restriction of $g$ to $c$.
This leaves us with $(dn-2\ell)$ half-edges in $\dom(g)$ and $(km-2\ell)$ half-edges in $\im(g)$ that have not been wired yet.
The last term gives the number of such wirings.

Next, we turn to asymptotics. Extracting $\lambda_\ell$ and expanding the falling factorials yields
\begin{flalign*}
\mathbb E[X_\ell]=\lambda_\ell d^\ell k^\ell\frac{n!m!(dn-2\ell)!}{(dn)!(n-\ell)!(m-\ell)!}\textrm{.}
\end{flalign*}
Using Stirling's formula we readily obtain that
\begin{flalign*}
\mathbb E[X_\ell]
&=\lambda_\ell d^\ell k^\ell\sqrt{\frac{nm(dn-2\ell)}{dn(n-\ell)(m-\ell)}}
\frac{n^nm^m(dn-2\ell)^{dn-2\ell}}{(dn)^{dn}(n-\ell)^{n-\ell}(m-\ell)^{m-\ell}},
\end{flalign*}
and so
\begin{flalign*}
\mathbb E[X_\ell]&\sim\lambda_\ell d^\ell k^\ell\sqrt{\frac{(1-\frac{2\ell}{dn})}{(1-\frac{\ell}{n})(1-\frac{\ell}{m})}}
\frac{n^{\ell}m^{\ell}(1-\frac{2\ell}{dn})^{dn-2\ell}}{(dn)^{2\ell}(1-\frac{\ell}{n})^{n-\ell}(1-\frac{\ell}{m})^{m-\ell}}
\sim\lambda_\ell d^\ell k^\ell\frac{n^{\ell}m^{\ell}}{(dn)^{2\ell}}.
\end{flalign*}
Using that $dn = km$ leads to
\begin{flalign*}
\mathbb E[X_\ell]&\sim\lambda_\ell d^\ell k^\ell\frac{n^{\ell}(dk^{-1}n)^{\ell}}{(dn)^{2\ell}}=\lambda_\ell\textrm{,}
\end{flalign*}
as claimed.
We turn to the general case.
For this purpose let $\bar\ell\in\mathbb Z_{>0}$, $r\in\mathbb Z_{\ge 0}^{\bar\ell}$ and let $n$ and $m$ be sufficiently large.
Then
\begin{flalign*}
(X_\ell(g))_{r_\ell}&=\prod_{s=0}^{r_\ell-1}(|\mathcal C_{\ell,g}|-s)=|\mathcal C_{\ell,r_\ell,g}|~\textrm{, where }~
\mathcal C_{\ell,r_\ell,g} =\{c\in\mathcal C_{\ell,g}^{r_\ell}:\forall s\in[r_\ell]\forall s'\in[s-1]\,c_s\neq c_{s'}\}
\end{flalign*}
for $g\in\mathcal G$, since this corresponds to an ordered choice of $2\ell$-cycles in $g$ without repetition.
The product can then be directly written as
\begin{flalign*}
\prod_{\ell=1}^{\bar\ell}(X_\ell(g))_{r_\ell}=|\mathcal C_{r,g}|~~\textrm{, where }~~
\mathcal C_{r,g}=\prod_{\ell=1}^{\bar\ell}\mathcal C_{\ell,r_\ell,g}\textrm{.}
\end{flalign*}
To avoid double indexed sequences we use the equivalent representation $c=(c_s)_{s\in[\mathfrak r]}\in\mathcal C_{r,g}$ where
$\mathfrak r=\sum_{1\le \ell \le \bar\ell} r_\ell$.
From the above we see that the cycles $c_s$ are ordered by their length $\ell_s$ in ascending order and are pairwise distinct. We obtain that
\begin{flalign*}
\mathbb E\left[\prod_{\ell=1}^{\bar\ell}(X_\ell)_{r_\ell}\right]=\frac{|\mathcal E|}{|\mathcal G|}~~\textrm{, where }~~
\mathcal E=\{(g,c):g\in\mathcal G,c\in\mathcal C_{r,g}\}.
\end{flalign*}
Since we have $\ell_s$ distinct variables and constraints in each cycle $c_s$ respectively, we can have at most $\mathfrak l=\sum_{s\in[\mathfrak r]}\ell_s$ distinct variables and constraints in $c$. Specifically, we only have $|V(c)|=\mathfrak l$ variables and $|F(c)|=\mathfrak l$ constraints iff all cycles $c_s$ are disjoint.
So, let
\begin{flalign*}
\mathcal E_0=\{(g,c)\in\mathcal E:|V(c)|=|F(c)|=\mathfrak l\}
\end{flalign*}
denote the set of pairs $(g,c)\in\mathcal E$ with disjoint cycles and further $\mathcal E_1=\mathcal E\setminus\mathcal E_0$ the remaining pairs.
Then we have
\begin{flalign*}
\frac{|\mathcal E_0|}{|\mathcal G|}
=\frac{1}{(dn)!\prod_{s=1}^{\mathfrak r}(2\ell_s)}(n)_{\mathfrak l}(m)_{\mathfrak l}(d(d-1)^{\mathfrak l}(k(k-1))^{\mathfrak l}(dn-2\mathfrak l)!
\end{flalign*}
for the following reasons.
For each cycle $c_s$ in $c$ counting the $2\ell_s$ directed cycles facilitates the computation, hence we find the corresponding product in the denominator.
Since the variables within each directed cycle and the cycles in the sequence are ordered we have an ordered choice of all variables.
Further, since the $\ell_s$ variables within each cycle are distinct and the cycles are pairwise disjoint we choose all variables without repetition. This explains the first falling factorial. The next term for the constraints follows analogously.
But since variables and constraints are disjoint the edges are too, hence we choose two edges for each of the $\mathfrak l$ variables and constraints respectively. Then we wire the remaining edges.\\
The asymptotics are derived analogously to the base case, i.e.
\begin{flalign*}
\frac{|\mathcal E_0|}{|\mathcal G|}
\sim\frac{(d-1)^{\mathfrak l}(k-1)^{\mathfrak l}}{\prod_{s=1}^{\mathfrak r}(2\ell_s)}
=\prod_{s=1}^{\mathfrak r}\lambda_{\ell_s}=\prod_{\ell=1}^{\bar \ell}\lambda_\ell^{r_\ell}\textrm{,}
\end{flalign*}
using the definition of $c=(c_s)_{s\in[\mathfrak r]}$ in the last step.
Since the contribution of the disjoint cycles already yields the desired result, we want to show that the contribution of intersecting cycles is negligible.
As before, we count directed cycles $\gamma_s$ and adjust the result accordingly, so let
\begin{flalign*}
\mathcal E_2=\{(g,\gamma):(g,c(\gamma))\in\mathcal E_1\}\textrm{, i.e. }|\mathcal E_2|=|\mathcal E_1|\prod_{s\in[\mathfrak r]}(2\ell_s)\textrm{.}
\end{flalign*}
In the next step we consider the \emph{relative position representations} $(\alpha,\rho)$ of sequences $\gamma$ of directed cycles.
Instead of a formal introduction we illustrate this concept in Figure \ref{f_relative_position_representation}.
The corresponding decomposition of the contributions to the expectation according to $\rho$ is
\begin{flalign*}
\frac{|\mathcal E_1|}{|\mathcal G|}
&=\sum_{\rho\in\mathcal R}
  \frac{|\mathcal E_{\rho}|}{|\mathcal G|\prod_{s\in[\mathfrak r]}(2\ell_s)}\textrm{, }
\mathcal E_{\rho}=\{(g,\gamma)\in\mathcal E_2:\rho(\gamma)=\rho\}\textrm{, }
\mathcal R=\{\rho(\gamma):(g,\gamma)\in\mathcal E_2\}\textrm{.}
\end{flalign*}
For the following reasons we can then derive
\begin{flalign*}
|\mathcal E_{\rho}|
=(n)_{n(\rho)}(m)_{m(\rho)}
\prod_{j\in[n(\rho)]}(d)_{d_j(\rho)}\prod_{b\in[m(\rho)]}(k)_{k_b(\rho)}(dn-e(\rho))!\textrm{.}
\end{flalign*}
Since $\rho$ is fixed, we have to fix the absolute values $\alpha$, thereby the directed cycle $\gamma$, and wire the remaining edges.
But the first four terms exactly correspond to the number of choices for the index vectors in $\alpha$.
This fixes $\gamma$, further the union $c(\gamma)$ of cycles and in particular $e(\rho)$ edges.
The remaining term counts the number of choices to wire the remaining edges.

For the asymptotics we notice that $n(\rho)$, $m(\rho)\le\mathfrak l$ and that also the two products are bounded in both the multiplication region and values. But this further implies that $|\mathcal R|$ is bounded, i.e.~the summation region is also finite in the limit and hence we can consider the asymptotics of each term separately, which yields
\begin{flalign*}
\frac{|\mathcal E_1|}{|\mathcal G|}
&=\sum_{\rho\in\mathcal R}
\frac{\prod_{i\in[n(\rho)]}(d)_{d_i(\rho)}\prod_{a\in[m(\rho)]}(k)_{k_a(\rho)}}{\prod_{s\in[\mathfrak r]}(2\ell_s)}
  \frac{(n)_{n(\rho)}(m)_{m(\rho)}(dn-e(\rho))!}{(dn)!}\\
&=\sum_{\rho\in\mathcal R}c_1(\rho)
  \frac{(n)_{n(\rho)}(m)_{m(\rho)}(dn-e(\rho))!}{(dn)!}\\
&\sim\sum_{\rho\in\mathcal R}c_1(\rho)
\left(\frac{1}{e}\right)^{n(\rho)}
\left(\frac{d}{ke}\right)^{m(\rho)}
\left(\frac{e}{d}\right)^{e(\rho)}
n^{n(\rho)+m(\rho)-e(\rho)}\\
&=\sum_{\rho\in\mathcal R}c_2(\rho)
n^{n(\rho)+m(\rho)-e(\rho)},
\end{flalign*}
where we summarized the terms that only depend on $\rho$ into constants.
Now, let $\rho\in\mathcal R$ and let $c=c(\rho)$ be the graph of $\rho$ as introduced in Section \ref{p_small_subgraph_conditioning}.
Since $\rho$ is a sequence of directed cycles that are not all disjoint,
its graph $c$ is the union of the corresponding (undirected) cycles that are not all disjoint.
But then $c$ has more edges than vertices, i.e. $3e(\rho)>n(\rho)+m(\rho)+2e(\rho)$, and hence
\begin{flalign*}
\frac{|\mathcal E_1|}{|\mathcal G|}
&\sim\sum_{\rho\in\mathcal R}c_2(\rho)
n^{n(\rho)+m(\rho)-e(\rho)}
\le n^{-1}\sum_{\rho\in\mathcal R}c_2(\rho)
=c_3n^{-1}\textrm{,}
\end{flalign*}
which shows that this contribution is negligible. This establishes the asymptotic equivalence
\begin{flalign*}
\mathbb E\left[\prod_{\ell\in[\bar \ell]}(X_\ell)_{r_\ell}\right]\sim\prod_{\ell\in[\bar \ell]}\lambda_\ell^{r_\ell}
\end{flalign*}
and allows to apply the method of moments, which directly yields Lemma \ref{lem_number_of_cycles}.
\end{document}